\documentclass[11pt,reqno]{amsart}
\usepackage{amsaddr}
\usepackage[pdftex]{graphicx}
\usepackage{pgfplots}
\usepackage{orcidlink}
\usepackage{geometry}
\usepackage{stix}
\usepackage{subcaption}

\usepackage{caption}
\usepackage{amsmath,amsthm}
\usepackage{cite}
\usepackage{hyperref}
\hypersetup{
    colorlinks=true, 
    linktoc=all,     
    allcolors=blue,  
}
\usepackage{url}
\newtheorem{theorem}{Theorem}[section]
\newtheorem{lemma}{Lemma}[section]
\newtheorem{proposition}{Proposition}[section]
\newtheorem{corollary}{Corollary}[section]

\theoremstyle{definition}
\newtheorem{definition}{Definition}[section]

\newtheorem{remark}{Remark}[section]
\numberwithin{equation}{section}
\pgfplotsset{compat=1.18}
\definecolor{wewdxt}{rgb}{0.45,0.45,0.45}
\definecolor{zzttqq}{rgb}{0.6,0.2,0.}
\definecolor{qqwuqq}{rgb}{0.,0.5,0.}
\definecolor{uququq}{rgb}{0.25,0.25,0.25}
\begin{document}
\pagenumbering{arabic}

\title[Siebeck--Marden Theorems and Symmetric Parametrizations]{Siebeck--Marden Theorems and Symmetric Parametrizations of Cyclic Polygons}
\author{Mohammad Hassan Murad\orcidlink{0000-0002-8293-5242}}
\address{Department of Mathematics\\
The University of Texas at Arlington, Arlington, TX, USA}
\email{mohammad.murad2@uta.edu}

\begin{abstract}
The classical Blaschke-product approach provides an elegant description of triangles inscribed in a circle and circumscribed about an ellipse. Motivated by the Siebeck--Marden theorem, we derive a symmetric parametrization for cyclic 
$p$-gons circumscribed about a Siebeck--Marden curve of class $p-1$, recovering the Blaschke product parametrization as the triangular case. We then use this parametrization to establish several geometric invariance results, including a characterization of when the sum of the squares of all sides and diagonals is invariant. We  revisit Cayley's criterion for 
3- and 4-Poncelet pairs and obtain a complete classification of the associated central conics solely in terms of their foci. In particular, we show that the formulas for the lengths of the major axis obtained under the assumption on the foci of the conic lying inside the circumcircle remain valid when one focus or both foci lie outside the circumcircle, thereby extending the classical theory from ellipses to all admissible central conics. Several geometric properties of the associated cyclic quadrilaterals are also obtained.
\end{abstract}

\keywords{$n$-Poncelet pair; Siebeck--Marden theorem; Blaschke product; elementary symmetric polynomials}
\subjclass[2020]{Primary: 51N20; Secondary: 30C15, 51M04}


\maketitle{}

\section{Introduction}\label{sec:intro}
\noindent
The interplay between Poncelet geometry, finite Blaschke products, and numerical ranges  has been an active area of research for several decades. Of particular importance is the surprising connection between finite Blaschke products of degree three and families of triangles inscribed in a circle and circumscribed about an ellipse.

To illustrate this approach, let $\mathbb D$ denote the open unit disk in $\mathbb C$ and let
$\mathbb T=\partial\mathbb D$ be the unit circle. A finite Blaschke product of degree three is defined by
\[
B(z)=
z\frac{z-a_1}{1-\overline{a_1}z}
 \frac{z-a_2}{1-\overline{a_2}z},
\qquad a_1,a_2\in\mathbb D.
\]
For every $\lambda\in\mathbb T$, the equation
\[
    B(z)=\lambda
\]
has three solutions $z_1,z_2,z_3$ which determine a triangle $\triangle z_1z_2z_3$ inscribed in the unit circle $\mathbb T$.

A classical theorem of Daepp, Gorkin, and Mortini \cite{daepp2002ellipses} (see also \cite{DaeppGorkinshaffervoss2018}) states that the sides of the triangle associated with a degree-3 Blaschke product are tangent to an ellipse with foci $a_1,a_2\in\mathbb D$ whose major axis has length
\begin{equation}\label{eq:majoraxisell3}
    |1-\overline{a_1}a_2|.
\end{equation}
Consequently, Blaschke products provide a natural analytic parametrization of Poncelet triangles, allowing many geometric questions to be translated into algebraic relations involving the foci $a_1,a_2$ and the unimodular parameter $\lambda$. This approach has led to numerous results on geometric invariants of families of triangles circumscribed about ellipses inscribed in a common circumcircle; see, for example,
\cite{Helmanetal.2022,Helmanetal.2023,Garciaetal.2026}. 

Using a similar approach---Blaschke product of degree 4---Fujimura \cite{Fujimura2013} subsequently obtained an analogous parametrization for cyclic quadrilaterals circumscribed about an ellipse whose foci $a_1,a_2$ lie inside the circumcircle and proved that the major axis of the ellipse has length
\begin{equation}\label{eq:majoraxisell4}
|1-\overline{a_1}a_2|
\sqrt{\frac{2-|a_1|^2-|a_2|^2}{1-|a_1|^2|a_2|^2}}. 
\end{equation}
A fundamental feature of the Blaschke product approach is the assumption that
\[
|a_1|<1,
\qquad
|a_2|<1,
\]
which guarantees that the associated conic with the 3- and 4-Poncelet pairs is an ellipse contained in the circumcircle (Figure \ref{fig:ellell3pons(A)}). From the viewpoint of Poncelet geometry, however, this is only one of the possible focal configurations. It excludes central conics whose one focus or both foci lie outside the circumcircle. Recent work \cite{Dragovic-Murad2026} (see also \cite{Dragovic-Radnovic2024}) shows that these configurations arise naturally in the Poncelet theory. See Figures \ref{fig:ellell3pons(B)}--\ref{fig:ellell3pons(C)}.

\begin{figure}
  \begin{subfigure}[b]{0.45\textwidth}
    \centering
\begin{tikzpicture}[scale=2.5]
\clip(-1.2,-1.2) rectangle (1.2,1.2);
\draw [line width=1.pt,color=gray] (0.,0.) circle (1.cm);
\draw [rotate around={47.03091423685805:(-0.10864102302989269,0.0959646526091253)},line width=1.pt,color=red] (-0.10864102302989269,0.0959646526091253) ellipse (0.5931004472932491cm and 0.38943652776176013cm);
\draw [line width=1.pt,color=blue] (-0.18268085589958466,0.983172266130303)-- (-0.8382385956087126,-0.5453036372810411);
\draw [line width=1.pt,color=blue] (-0.8382385956087126,-0.5453036372810411)-- (0.9915802265023382,-0.12949383927265212);
\draw [line width=1.pt,color=blue] (0.9915802265023382,-0.12949383927265212)-- (-0.18268085589958466,0.983172266130303);
\begin{scriptsize}
\draw [fill=wewdxt] (0.,0.) circle (0.6pt);
\draw[color=wewdxt] (0.0328643396112826,0.08290416268653264) node {$O$};
\draw[color=wewdxt] (-0.7150897315851618,0.7959537105604741) node {$\mathbb T$};
\draw [fill=wewdxt] (-0.18268085589958466,0.983172266130303) circle (0.7pt);
\draw[color=wewdxt] (-0.18,1.0801762576151221) node {$z_1$};
\draw [fill=black] (-0.4135452259255244,-0.23135897697006552) circle (0.6pt);
\draw[color=black] (-0.34,-0.16) node {$a_1$};
\draw [fill=black] (0.19626317986573905,0.4232882821883161) circle (0.6pt);
\draw[color=black] (0.16,0.33) node {$a_2$};
\draw[color=red] (-0.46577170785301364,0.08290416268653264) node {$\mathcal D$};
\draw [fill=wewdxt] (-0.8382385956087126,-0.5453036372810411) circle (0.7pt);
\draw[color=wewdxt] (-0.92,-0.6) node {$z_2$};
\draw [fill=wewdxt] (0.9915802265023382,-0.12949383927265212) circle (0.7pt);
\draw[color=wewdxt] (1.1,-0.15) node {$z_3$};
\end{scriptsize}
\end{tikzpicture}
    \caption{The foci of $\mathcal{D}$ lie inside $\mathbb T$.}
    \label{fig:ellell3pons(A)}
\end{subfigure}
  \begin{subfigure}[b]{0.45\textwidth}
    \centering
\begin{tikzpicture}[scale=2]
\clip(-1.5,-1.5) rectangle (2,1.5);
\draw [line width=1.pt,gray] (0.,0.) circle (1.cm);
\draw [rotate around={30.96375653207352:(0.25,-0.25)},line width=1.pt,color=red] (0.25,-0.25) ellipse (1.5811388312175827cm and 0.6123724386222013cm);
\draw [line width=1.pt,domain=-1.25:-0.6,lightgray] plot(\x,{(-1.3666139290970083-1.5341662006726962*\x)/-0.4453283004437236});
\draw [line width=1.pt,domain=-1.0:1.7,lightgray] plot(\x,{(--1.3896713737525987-0.01759274556816859*\x)/1.904857468062226});
\draw [line width=1.pt,color=blue] (-0.6772039690466689,0.735795341319472)-- (-0.965920388236309,-0.2588393393389382);
\draw [line width=1.pt,color=blue] (-0.965920388236309,-0.2588393393389382)-- (0.6906785018479058,0.7231619508001873);
\draw [line width=1.pt,color=blue] (0.6906785018479058,0.7231619508001873)-- (-0.6772039690466689,0.735795341319472);
\begin{scriptsize}
\draw [fill=wewdxt] (0.,0.) circle (0.7pt);
\draw[color=wewdxt] (0.05070647863465335,0.11395926323489103) node {$O$};
\draw[color=black] (-0.41459763982998493,1.058667624966127) node {$\mathbb T$};
\draw [fill=black] (-1.,-1.) circle (0.7pt);
\draw[color=black] (-0.9,-0.92) node {$a_1$};
\draw [fill=black] (1.5,0.5) circle (0.7pt);
\draw[color=black] (1.41,0.38) node {$a_2$};
\draw[color=red] (-0.65,-0.27) node {$\mathcal D$};
\draw [fill=wewdxt] (-0.6772039690466689,0.735795341319472) circle (0.7pt);
\draw[color=wewdxt] (-0.74,0.86) node {$z_1$};
\draw [fill=wewdxt] (-0.965920388236309,-0.2588393393389382) circle (0.7pt);
\draw[color=wewdxt] (-1.09,-0.3) node {$z_2$};
\draw [fill=wewdxt] (0.6906785018479058,0.7231619508001873) circle (0.7pt);
\draw[color=wewdxt] (0.73,0.83) node {$z_3$};
\end{scriptsize}
\end{tikzpicture}
    \caption{The foci of $\mathcal{D}$ lie outside $\mathbb{T}$.}
    \label{fig:ellell3pons(B)}
\end{subfigure}
  \begin{subfigure}[b]{0.45\textwidth}
    \centering
\definecolor{xdxdff}{rgb}{0.5,0.5,1.}
\begin{tikzpicture}[scale=2]
\clip(-2,-2) rectangle (3,2.5);
\draw [line width=1.pt,gray] (0.,0.) circle (1.cm);
\draw [samples=50,domain=-0.99:0.99,rotate around={39.8055710922652:(0.25,0.375)},xshift=0.25cm,yshift=0.375cm,line width=1.pt,color=red] plot ({0.883883478235715*(1+(\x)^2)/(1-(\x)^2)},{0.4145780950580171*2*(\x)/(1-(\x)^2)});
\draw [samples=50,domain=-0.99:0.99,rotate around={39.8055710922652:(0.25,0.375)},xshift=0.25cm,yshift=0.375cm,line width=1.pt,color=red] plot ({0.883883478235715*(-1-(\x)^2)/(1-(\x)^2)},{0.4145780950580171*(-2)*(\x)/(1-(\x)^2)});
\draw [line width=1.pt,domain=-1.2:3.5,lightgray] plot(\x,{(--1.6137566140999264--0.5797844868683809*\x)/2.945291874537984});
\draw [line width=1.pt,domain=-1.2:0.5,lightgray] plot(\x,{(-0.3304765837104118-0.5495063536511812*\x)/0.4969736286455214});
\draw [line width=1.pt,domain=0.2:1.2,lightgray] plot(\x,{(-0.8827809881922324--1.6533245900836944*\x)/0.45396842724236075});
\draw [line width=1.pt,color=blue] (-0.9311601169298341,0.36461052732911803)-- (0.269525053806589,-0.9629933776358773);
\draw [line width=1.pt,color=blue] (0.269525053806589,-0.9629933776358773)-- (0.7234934810489497,0.690331212447817);
\draw [line width=1.pt,color=blue] (0.7234934810489497,0.690331212447817)-- (-0.9311601169298341,0.36461052732911803);
\begin{scriptsize}
\draw [fill=wewdxt] (0.,0.) circle (0.7pt);
\draw[color=wewdxt] (0.0391164331436456,0.10489268918362266) node {$O$};
\draw[color=black] (-0.5481280247583584,0.9339436885746862) node {$\mathbb T$};
\draw [fill=black] (-0.5,-0.25) circle (0.7pt);
\draw[color=black] (-0.58,-0.35) node {$a_1$};
\draw [fill=black] (1.,1.) circle (0.7pt);
\draw[color=black] (1.1,1.1) node {$a_2$};
\draw[color=red] (1.2423918419820656,2.0105585280616927) node {$\mathcal D$};
\draw [fill=wewdxt] (-0.9311601169298341,0.36461052732911803) circle (0.7pt);
\draw[color=wewdxt] (-1.07,0.42) node {$z_1$};
\draw [fill=wewdxt] (0.269525053806589,-0.9629933776358773) circle (0.7pt);
\draw[color=wewdxt] (0.31,-1.14) node {$z_2$};
\draw [fill=wewdxt] (0.7234934810489497,0.690331212447817) circle (0.7pt);
\draw[color=wewdxt] (0.84,0.79) node {$z_3$};
\end{scriptsize}
\end{tikzpicture}
    \caption{A focus of $\mathcal D$ lies inside $\mathbb T$ and the other lies outside.}
    \label{fig:ellell3pons(C)}
    \end{subfigure}
    \caption{The triangle $\triangle z_1z_2z_3$ is inscribed in a unit circle $\mathbb{T}$ and circumscribed about a central conic $\mathcal{D}$ with foci $a_1,a_2 \in \mathbb C$.}
    \label{fig:ellell3pons}
\end{figure}

The first goal of this paper is to remove the restriction that the foci lie inside the circumcircle. Using Cayley's criterion, we characterize all central conics forming $3$- and $4$-Poncelet pairs with the unit circle solely in terms of their focal configurations. In particular, we prove that the major-axis length formulas \eqref{eq:majoraxisell3} and \eqref{eq:majoraxisell4} remain valid for every admissible pair of foci, regardless of whether the corresponding conic is an ellipse or a hyperbola. Thus, the classical formulas extend uniformly to both the elliptic and hyperbolic settings.

The second goal of the present paper is to place these parametrizations into a broader geometric framework. Motivated by the Siebeck--Marden theorem, we derive a symmetric parametrization for cyclic $p$-gons circumscribed about an algebraic curve of class $p-1$ with $p-1$ real foci. We call these curves the \emph{Siebeck--Marden curve of class $p-1$}. The classical Blaschke-product parametrization is recovered as the triangular case, providing a geometric interpretation of the Blaschke-product formalism and extending it naturally to arbitrary finite degree.

As applications of this parametrization, we establish several geometric invariants of Poncelet families. In particular, we characterize when the sum of the squares of all sides and diagonals of a cyclic $p$-gon is invariant, obtaining a necessary and sufficient condition in terms of the circumcenter and the foci of the associated Siebeck--Marden curve. We also provide a short proof and extension of a recent theorem \cite[Theorem 2.1]{Celiketal2026} on the total area of power circles and derive several new results for cyclic quadrilaterals, including focal criteria for orthodiagonality and relations involving orthoptic and auxiliary circles.

The paper is organized as follows. In Section \ref{sec:Cayley} we review Cayley's criterion for 3- and 4- Poncelet pairs of conics and derive generalized Chapple--Euler and Cayley--Fuss relations for circle--conic pairs. Section \ref{sec:siebeckmarden} develops the symmetric parametrization of cyclic polygons via the Siebeck--Marden theorem. The final section is devoted to applications of the symmetric parametrization obtained in the preceding section.

\section{Cayley's Criterion and Focal Characterizations of Circle–Conic Poncelet Pairs
}\label{sec:Cayley}
\subsection{Cayley's Criterion}
We begin with Cayley's criterion for determining whether two conics form an $n$-Poncelet pair. Throughout this section, we identify a conic with the corresponding $3\times3$ symmetric matrix defining it.

\begin{theorem}[Cayley 1853, \cite{Cayley1853a,Cayley1853b}]\label{thm.2.1}
Let $\mathcal{C}_1$ and $\mathcal{C}_2$ be two distinct conics in $\mathbb{CP}^2$ in general position, and
\[
\sqrt{\det(\lambda\mathcal{C}_1+\mathcal{C}_2)}=A_0+A_1\lambda+\cdots +A_n \lambda^n+\cdots.
\]
Then $(\mathcal{C}_1,\mathcal{C}_2)$ is an $n$-Poncelet pair if and only if
\begin{align*}
\begin{vmatrix}
    A_2 & \cdots & A_{m+1}\\
    \vdots & \ddots & \vdots \\
     A_{m+1} & \cdots & A_{2m}
\end{vmatrix} &= 0,~\text{if}~ n = 2m+1 ~(m \geq 1);\\\\
\begin{vmatrix}
    A_3 & \cdots & A_{m+1}\\
    \vdots & \ddots & \vdots \\
     A_{m+1} & \cdots & A_{2m-1}
\end{vmatrix} &= 0,~\text{if}~ n = 2m ~(m \geq 2).
\end{align*}
\end{theorem}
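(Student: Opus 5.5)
Cayley's criterion is classical, and the paper only quotes it, so the natural route is the Griffiths--Harris proof via the elliptic curve attached to the pencil.

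\textbf{Step 1: the elliptic curve.} Let $f(\lambda)=\det(\lambda\mathcal C_1+\mathcal C_2)$. Because the conics are in general position, $f$ is a cubic in $\lambda$ with three distinct roots, none at $\lambda=0$ (so $\mathcal C_2$ is nonsingular). Consider
\[
E=\{(p,\ell): p\in\mathcal C_1,\ \ell \text{ a line through } p \text{ tangent to } \mathcal C_2\}.
\]
The projection $E\to\mathcal C_1\cong\mathbb P^1$ is a double cover branched at the four points of $\mathcal C_1\cap\mathcal C_2$, so $E$ is a smooth curve of genus one. The Poncelet construction is $\sigma=\iota_2\circ\iota_1$, where $\iota_1$ swaps the two tangent lines through $p$ and $\iota_2$ swaps the two points of $\ell\cap\mathcal C_1$. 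Each $\iota_j$ is an involution with fixed points, hence of the form $x\mapsto c_j-x$ in the group law. Therefore $\sigma$ is translation by a fixed point $P_1-P_0\in E$. The pair is $n$-Poncelet exactly when this translation has order dividing $n$, i.e. $n(P_1-P_0)=0$.

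\textbf{Step 2: the Weierstrass model.} Identify $E$ with the curve $\mu^2=f(\lambda)$ compactified. The identification sends $(p,\ell)$ to the unique member of the pencil through $p$ tangent to $\ell$. Under it the base point $P_0$ goes to the point over $\lambda=\infty$, which becomes a point at infinity after the substitution $\lambda\mapsto1/\lambda$. Its image $P_1$ is a point $(0,\sqrt{f(0)})$ over $\lambda=0$, or equivalently an inflection-type point in the transformed model. Verifying that these identifications match the explicit involutions is the main obstacle. I would first check it concretely, by taking $\mathcal C_2$ as the standard conic and parametrizing $\mathcal C_1$, rather than arguing abstractly.

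\textbf{Step 3: Riemann--Roch.} After moving the relevant points, the criterion becomes $n\,P=n\,O$ in the appropriate normalization, with $P$ the point over $\lambda=0$. This holds iff some function in $L(n\cdot P)$ has a zero of order $n$ at $O$. Use the local coordinate $\lambda$ at $P$, write $\mu=\sqrt{f(\lambda)}=\sum A_k\lambda^k$, and take the basis of $L(nP)$ given by polynomials in $\lambda^{-1}$ together with $\mu$ times polynomials in $\lambda^{-1}$.

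\begin{itemize}
\item For $n=2m+1$, the basis is $1,\lambda^{-1},\dots,\lambda^{-m},\ \mu\lambda^{-1-m},\dots$ (suitably shifted). The vanishing-order condition becomes a linear system in the coefficients, which has a nontrivial solution iff the Hankel determinant $\det(A_{i+j})_{2\le i+j\le 2m}$ vanishes.
\item For $n=2m$, the same computation, with one fewer degree of freedom in the $\mu$-part, produces the determinant $\det(A_{i+j+1})$ running from $A_3$ to $A_{2m-1}$.
\end{itemize}

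The linear-algebra reduction is routine once the local expansion is set up. Tracking the exact index shifts, and so landing precisely on $A_2$ versus $A_3$ as the first entry, is where care is needed. I would confirm the indices against the cases $n=3$ (the condition $A_2=0$) and $n=4$ (the condition $A_3=0$).
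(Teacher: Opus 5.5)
The paper gives no proof of this theorem; it is simply quoted from Cayley. So your Griffiths--Harris route is the natural one, and Step~1 is fine.

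\textbf{Step 2 is a genuine gap.} You defer it as ``the main obstacle,'' but it carries all of the content of Cayley's criterion beyond Poncelet's porism, and the map you propose there does not work. If $p\in\mathcal C_1$ is not a base point, then $\mathcal C_1(p)=0\neq\mathcal C_2(p)$. So the only member of the pencil $\lambda\mathcal C_1+\mathcal C_2$ through $p$ is $\mathcal C_1$ itself, and your map is constantly $\lambda=\infty$. The remarks about $\lambda\mapsto1/\lambda$ and an ``inflection-type point'' do not repair this.

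An identification that works goes through a fixed base point $x_0\in\mathcal C_1\cap\mathcal C_2$. Send $p\in\mathcal C_1$ to the $\lambda$ for which the line $x_0p$ is the tangent at $x_0$ to $\lambda\mathcal C_1+\mathcal C_2$. That tangent is $(\lambda\mathcal C_1+\mathcal C_2)x_0$, which is linear in $\lambda$, so this is a projective isomorphism $\phi\colon\mathcal C_1\to\mathbb P^1$. It sends:
\begin{itemize}
\item $x_0$ to $\infty$;
\item the other base points $q_1,q_2,q_3$ to the three roots of $f$, since the line $x_0q_j$ is a component of a singular member;
\item the second intersection $x_1$ of $\mathcal C_1$ with the tangent $\xi_0$ to $\mathcal C_2$ at $x_0$ to $\lambda=0$.
\end{itemize}
Let $\pi\colon E\to\mathcal C_1$ be the projection. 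Then $\phi\circ\pi$ is a double cover branched exactly over $\infty$ and the roots of $f$, so $E\cong\{\mu^2=f(\lambda)\}$ over $\mathbb P^1$. Under this isomorphism, $P_0=(x_0,\xi_0)$ goes to the point $O$ over $\infty$. Also $P_1=\iota_2(P_0)=(x_1,\xi_0)$ goes to a point $P$ over $\lambda=0$; the sign of $\sqrt{f(0)}$ is immaterial. With $O$ as origin, $\iota_1$ becomes $\mu\mapsto-\mu$, i.e.\ $-\mathrm{id}$. Hence $\sigma$ is translation by $P$, and the $n$-Poncelet condition is $nP\sim nO$.

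\textbf{Step 3 is set up backwards.} Because $f(0)=\det\mathcal C_2\neq0$, the value $\lambda=0$ is not a branch value. So $\lambda^{-i}$ and $\mu\lambda^{-j}$ have poles at \emph{both} points over $\lambda=0$ (note $\mu$ does not vanish there), and your proposed basis does not lie in $L(nP)$. Moreover, testing vanishing at $O$ would require an expansion at $O$, whereas the $A_k$ are Taylor coefficients at $P$.

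Reverse the roles. We have $nP\sim nO$ if and only if some nonzero $g\in L(nO)$ has $\operatorname{ord}_Pg\ge n$; a degree count then forces $\operatorname{div}g=nP-nO$. The space $L(nO)$ is spanned by the $\lambda^i$ with $2i\le n$ and the $\mu\lambda^j$ with $2j+3\le n$. Write $g=a(\lambda)+b(\lambda)\mu$ and expand at $P$ in the local coordinate $\lambda$, with $\mu=\sum A_k\lambda^k$.
\begin{itemize}
\item For $n=2m+1$, one has $\deg a\le m$ and $\deg b\le m-1$. So the coefficients of $\lambda^{m+1},\dots,\lambda^{2m}$ come only from $b\mu$. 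These give $m$ equations $\sum_{j=0}^{m-1}b_jA_{k-j}=0$ for $k=m+1,\dots,2m$, whose matrix is $(A_{i+j})_{i,j=1}^{m}$ after reversing the columns. Then $a$ is determined by the lower coefficients, and $b=0$ forces $g=0$.
\item For $n=2m$, one has $\deg b\le m-2$. The coefficients of $\lambda^{m+1},\dots,\lambda^{2m-1}$ give the matrix $(A_{i+j+1})_{i,j=1}^{m-1}$.
\end{itemize}
This yields the stated determinants directly, with no index guessing; in particular $A_2=0$ for $n=3$ and $A_3=0$ for $n=4$. Here $\mathcal C_1$ carries the vertices and $\mathcal C_2$ the sides, matching the paper's use with $\mathcal C_1=\mathcal C$ and $\mathcal C_2=\mathcal D$.
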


For the purposes of this paper, we require only the cases $n=3$ and $n=4$. Accordingly, Cayley's criteria reduce to
\[
A_2=0
\]
for triangles and
\[
A_3=0
\]
for quadrilaterals.

Write
\[
\det(\lambda\mathcal C+\mathcal D)
=
C_0+C_1\lambda+C_2\lambda^2+C_3\lambda^3,
\]
where
\begin{align*}
    C_0=\det (\mathcal D),\qquad
    C_1=\operatorname{Tr}(\mathcal C\, \operatorname{adj}(\mathcal D)),\qquad
    C_2 =\operatorname{Tr}(\operatorname{adj(\mathcal C)\,\mathcal D}),\qquad 
    C_3=\det (\mathcal C).
\end{align*}
On the other hand,
\begin{align*}
    \det(\lambda\mathcal{C}+\mathcal{D})&=(A_0+A_1\lambda+\cdots +A_n \lambda^n+\cdots)^2\\
    &=\sum_{n=0}^{\infty}
\left(
\sum_{k=0}^{n}A_kA_{n-k}
\right)\lambda^n. 
\end{align*}
Comparing the coefficients of various powers of $\lambda$, we obtain
\begin{align*}
    A_0&=\sqrt{C_0}\\
    A_1&=\frac{C_1}{2A_0}\\
    A_2&=\frac{C_2-A_1^2}{2A_0}\\
    A_3&=\frac{C_3-2A_1A_2}{2A_0}
\end{align*}
Hence Cayley's criteria become
\begin{equation}\label{eq:3ponscond}
4C_0C_2-C_1^2=0
\end{equation}
for triangles, and
\begin{equation}\label{eq:4ponscond}
8C_0^2C_3-(4C_0C_2-C_1^2)C_1=0
\end{equation}
for quadrilaterals.

We now specialize to the case where $\mathcal C$ is a circle and $\mathcal D$ is a central conic:
\begin{align}
\mathcal{C}&: (x-x_O)^2+(y-y_O)^2=R^2, \label{eq:mathcalC}\\
\mathcal{D}&: \frac{x^2}{a^2}+\varepsilon\frac{y^2}{b^2}=1 \label{eq:mathcalD}
\end{align}
where $a \geq b>0$ and 
\[
\varepsilon=
\begin{cases}
1, & \text{if }\mathcal D\text{ is an ellipse},\\
-1, & \text{if }\mathcal D\text{ is a hyperbola}.
\end{cases}
\]
Substituting the corresponding matrices into \eqref{eq:3ponscond} and \eqref{eq:4ponscond} yields explicit conditions for $3$- and $4$-Poncelet pairs, which are derived in the following subsections.

\subsection{Generalized Chapple–Euler and Cayley–Fuss relations}
Let $\mathcal C$ and $\mathcal D$ also denote the following $3\times3$ symmetric matrices representing the corresponding quadratic forms:
\begin{align*}
\mathcal{C} &=\begin{pmatrix}
1 & 0 & -x_O\\
0 & 1 & -y_O\\
-x_O & -y_O & x_O^2+y_O^2-R^2
\end{pmatrix},\\
\mathcal{D} &= \begin{pmatrix}
\frac{1}{a^2} & 0 & 0\\
0 & \frac{\varepsilon}{b^2} & 0\\
0 & 0 & -1
\end{pmatrix}.
\end{align*}
For ellipse ($\varepsilon = 1$), the corresponding coefficients in the characteristic polynomial are
\begin{align*}
    C_0&=-\frac{1}{a^2b^2}\\
    C_1&=\frac{-a^2-b^2-R^2+x_O^2+y_O^2}{a^2b^2}\\
    C_2&=-1-\frac{R^2}{a^2}-\frac{R^2}{b^2}+\frac{x_O^2}{a^2}+\frac{y_O^2}{b^2}\\
    C_3&=-R^2.
\end{align*}

Let \(F_{\pm}=\bigl(\pm\sqrt{a^2-b^2},0\bigr)\) denote the foci of the ellipse \(\mathcal D\), and let \(d_{\pm}\) be their distances from the circumcenter \(O\). Since
\[
d_{\pm}^{\,2}
=
\left(x_O\mp\sqrt{a^2-b^2}\right)^2+y_O^2,
\]
substituting these expressions into \eqref{eq:3ponscond} and \eqref{eq:4ponscond} yields the following geometric characterizations of circle--ellipse \(3\)- and \(4\)-Poncelet pairs. The corresponding characterizations for circle--hyperbola \(3\)- and \(4\)-Poncelet pairs are obtained by replacing \(b^2\) with \(-b^2\).

\begin{theorem}\label{thm:cayleychapplefuss}
A circle $\mathcal C$ of radius $R$ and central conic $\mathcal D$ of semi-axes lengths $a,b$ with $a \geq b>0$ form
\begin{itemize}
    \item a 3-Poncelet pair if and only if
\begin{equation}\label{eq:genchappleeuler}
        (R^2-d_+^2)(R^2-d_-^2)- 4\varepsilon b^2 R^2=0;
\end{equation}
    \item a 4-Poncelet pair if and only if
\begin{equation}\label{eq:cayleyfuss}
        \left(R^2-\frac{d_+^2+d_-^2}{2}\right)\left((R^2-d_+^2)(R^2-d_-^2)- 4\varepsilon b^2 R^2\right)+2a^2 (R^2-d_+^2)(R^2-d_-^2)=0,
\end{equation}
\end{itemize}
where $d_\pm$ denote the distances from the center of the circle $\mathcal C$
to the two foci of $\mathcal D$, and
\[
\varepsilon=
\begin{cases}
1, & \text{if }\mathcal D\text{ is an ellipse},\\
-1, & \text{if }\mathcal D\text{ is a hyperbola}.
\end{cases}
\]
\end{theorem}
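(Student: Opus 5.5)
Insert the circle matrix $\mathcal C$ and the conic matrix $\mathcal D$ into the reduced Cayley conditions \eqref{eq:3ponscond} and \eqref{eq:4ponscond}, using the coefficients $C_0,\dots,C_3$ computed above. Then rewrite the resulting polynomial identities in terms of $R$, $b$ (or $a$) and the focal distances $d_\pm$. To treat both cases at once, I would redo the coefficient computation with $b^2$ replaced by $\varepsilon b^2$:
\[
C_0=-\frac{\varepsilon}{a^2b^2},\qquad C_1=\frac{-a^2-\varepsilon b^2-R^2+x_O^2+y_O^2}{a^2b^2}\,\varepsilon,\qquad C_2=-1-\frac{R^2}{a^2}-\frac{\varepsilon R^2}{b^2}+\frac{x_O^2}{a^2}+\frac{\varepsilon y_O^2}{b^2},\qquad C_3=-R^2 .
\]
This is justified because $\mathcal D$ depends on $b$ only through $\varepsilon/b^2$. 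The focal half-distance is then $c^2=a^2-\varepsilon b^2$, so $F_\pm=(\pm c,0)$ holds in both cases.

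The key algebraic step is to express the coordinate quantities through the foci:
\[
d_+^2+d_-^2=2(x_O^2+y_O^2+c^2),\qquad d_+^2d_-^2=(x_O^2+y_O^2+c^2)^2-4c^2x_O^2 .
\]
Write $s=x_O^2+y_O^2$.

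\textbf{Triangle case.} Compute $4C_0C_2-C_1^2$ and multiply by $-a^4b^4$; this should be a constant multiple of
\[
(s-a^2-\varepsilon b^2-R^2)^2-4\bigl(a^2b^2\varepsilon+\varepsilon R^2b^2+R^2a^2-x_O^2\varepsilon b^2-y_O^2a^2\bigr),
\]
possibly up to sign. Next expand $(R^2-d_+^2)(R^2-d_-^2)-4\varepsilon b^2R^2$ as $R^4-2R^2(s+c^2)+(s+c^2)^2-4c^2x_O^2-4\varepsilon b^2R^2$ and substitute $c^2=a^2-\varepsilon b^2$. The two polynomials in $R^2,s,x_O^2,y_O^2$ should then agree term by term. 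The only nontrivial match is the cross terms: $-4c^2x_O^2$ must be reproduced by $4\varepsilon b^2x_O^2+4a^2y_O^2-4a^2 s$. Once the identity holds, the nonzero factor $a^4b^4$ drops out, which gives \eqref{eq:genchappleeuler}.

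\textbf{Quadrilateral case.} Reuse the triangle computation. Let $P=(R^2-d_+^2)(R^2-d_-^2)-4\varepsilon b^2R^2$, so that $4C_0C_2-C_1^2=\kappa P/(a^4b^4)$ for the explicit constant $\kappa$ found above. Condition \eqref{eq:4ponscond} then reads
\[
8C_0^2C_3=\frac{\kappa P\, C_1}{a^4b^4},\qquad\text{i.e.}\qquad -8R^2=\kappa P\,\varepsilon\,(s-a^2-\varepsilon b^2-R^2).
\]
The linear factor simplifies as
\[
s-a^2-\varepsilon b^2-R^2=\frac{d_+^2+d_-^2}{2}-c^2-a^2-\varepsilon b^2-R^2=\frac{d_+^2+d_-^2}{2}-R^2-2a^2 .
\]
Substituting gives
\[
P\Bigl(R^2-\tfrac{d_+^2+d_-^2}{2}\Bigr)+2a^2P\;\propto\;R^2 .
\]
The term $2a^2P+8\varepsilon b^2a^2R^2$ then collapses to $2a^2(R^2-d_+^2)(R^2-d_-^2)$, which is exactly the form of \eqref{eq:cayleyfuss}. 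I expect the main obstacle to be the bookkeeping of the constant $\kappa$ and the signs $\varepsilon$, since $\varepsilon^2=1$ must be used repeatedly so that the $8R^2$ term is absorbed precisely. I would first check the special case $x_O=y_O=0$ to fix the constants before doing the general verification.

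Finally, the hypothesis of general position guarantees $C_0\neq0$, so the divisions by $A_0$ performed above are legitimate.
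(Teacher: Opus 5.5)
This is the paper's own argument: substitute the two matrices into the reduced Cayley conditions \eqref{eq:3ponscond}--\eqref{eq:4ponscond} and rewrite using $d_\pm^2=s+c^2\mp 2cx_O$; your triangle computation is right, and in fact $-a^4b^4(4C_0C_2-C_1^2)=P$ exactly, so $\kappa=-1$. The only slip is in the quadrilateral step: because $C_1$ carries the denominator $a^2b^2$, the condition is $-8a^2b^2R^2=\kappa P\,\varepsilon\,(s-a^2-\varepsilon b^2-R^2)$, not $-8R^2=\cdots$, and it is this factor together with $\kappa=-1$ (whose sign, unlike in the triangle case, genuinely matters here) that produces the term $8\varepsilon a^2b^2R^2$ you then absorb into $2a^2(R^2-d_+^2)(R^2-d_-^2)$.
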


\begin{corollary}
The circumradius $R$, inradius $r$ and the distance $d$ between the circumcenter and incenter of 
\begin{itemize}
    \item a triangle satisfy
\begin{equation}\label{eq:chappleeuler}
        (R^2 - d^2)^2=4 R^2 r^2,
\end{equation}
    \item a bicentric quadrilateral satisfy
\begin{equation}\label{eq:genfuss}
        \left(R^2-d^2\right)\left((R^2 - d^2)^2-2 r^2 (R^2 + d^2)\right)=0.
\end{equation}
\end{itemize}
\end{corollary}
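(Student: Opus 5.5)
This is the special case of Theorem \ref{thm:cayleychapplefuss} in which the conic $\mathcal D$ is a circle, so the plan is to specialize the theorem rather than redo Cayley's computation. For a triangle, or a bicentric quadrilateral, the incircle is the conic $\mathcal D$, with $a=b=r$ and $\varepsilon=1$. Its two foci then coincide with the incenter, since $F_\pm=(\pm\sqrt{a^2-b^2},0)=(0,0)$, and so $d_+=d_-=d$. One also needs that the circumcircle and the incircle actually form a $3$-, respectively $4$-, Poncelet pair. This is immediate: the polygon is inscribed in one and circumscribed about the other, and by Poncelet's porism the closure property then holds for every starting point. The forward direction of Theorem \ref{thm:cayleychapplefuss} therefore applies.

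For the triangle, substitute into \eqref{eq:genchappleeuler}:
\[
(R^2-d^2)(R^2-d^2)-4r^2R^2=0,
\]
which is exactly \eqref{eq:chappleeuler}. For the quadrilateral, substitute into \eqref{eq:cayleyfuss}, using $\tfrac{d_+^2+d_-^2}{2}=d^2$ and $a^2=r^2$:
\[
(R^2-d^2)\bigl((R^2-d^2)^2-4r^2R^2\bigr)+2r^2(R^2-d^2)^2=0 .
\]
Factor out $(R^2-d^2)$. The remaining factor is $(R^2-d^2)^2-4r^2R^2+2r^2(R^2-d^2)=(R^2-d^2)^2-2r^2(R^2+d^2)$, which gives \eqref{eq:genfuss}.

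The only step that needs any care is confirming that Theorem \ref{thm:cayleychapplefuss} remains valid in the degenerate case $a=b$. There the foci merge, and Cayley's criterion (Theorem \ref{thm.2.1}) assumes the conics are in general position. I will handle this by noting that the coefficients $C_0,\dots,C_3$ computed above are polynomial in $a,b,x_O,y_O,R$ and are valid verbatim at $a=b$. Conditions \eqref{eq:3ponscond} and \eqref{eq:4ponscond} therefore hold directly for the circle pair, and the stated identities are just these conditions rewritten in terms of $d_\pm$.

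If one is uneasy about applying Cayley's criterion to two circles, there is a fallback: a continuity argument, approximating the incircle by Poncelet ellipses with $b\to a$. Alternatively, one can simply check that the circle--circle case of Cayley's determinant condition is the classical Euler/Fuss condition.

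Finally, no further case analysis is needed in the quadrilateral case. The factor $R^2-d^2$ is positive, because the incenter lies inside the circumcircle, so \eqref{eq:genfuss} is equivalent to Fuss's relation. The corollary, however, only asserts the product form.
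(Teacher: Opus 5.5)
Your proposal is correct and is precisely the paper's argument: substitute $a=b=r$, $\varepsilon=1$, $d_+=d_-=d$ into \eqref{eq:genchappleeuler} and \eqref{eq:cayleyfuss}, factor out $R^2-d^2$, and your algebra checks out. You treat the degenerate case $a=b$ more carefully than the paper's one-line appeal to ``the limit''. In fact two nested non-concentric circles already meet in four distinct points (the two circular points and two non-real points on the radical axis), so Cayley's criterion applies to them directly, and only the concentric case $d=0$ (double contact at the circular points) needs your continuity or direct-check fallback.
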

\begin{proof} In the limit when the foci of the ellipse coincide i.e., $a=b$ and $d_+=d_-=d$ is the distance between the circumcenter and incenter, we may take $a=b=r$ and $\varepsilon =1$ in \eqref{eq:genchappleeuler} and \eqref{eq:cayleyfuss} which reduce to \eqref{eq:chappleeuler} and \eqref{eq:genfuss}, respectively.
\end{proof}

\begin{remark}
The formula \eqref{eq:genchappleeuler} may be referred to as the \emph{Generalized Chapple--Euler Formula}. For an alternative derivation of \eqref{eq:genchappleeuler} without using Cayley's criterion, see \cite{Dragovic-Murad2026}. The second factor in \eqref{eq:genfuss} is the classical \emph{Fuss relation} for bicentric quadrilaterals.
\end{remark}

\subsection{Focal Characterization of 3- and 4-Poncelet Pairs of a Circle and Central Conics}
\label{sec:ccc3poncp}

In this subsection we establish a characterization of central conics that form 3- and 4-Poncelet pairs with the unit circle in terms of the focal parameters $a_1,a_2$.

\begin{theorem}\label{thm:ellorhyp3}
Let $\mathbb{T}$ denote the unit circle, and let $a_1,a_2\in\mathbb{C}$ be distinct points such that $a_1,a_2\notin\mathbb{T}$ and $a_1,a_2$ are not inverse points with respect to $\mathbb{T}$. Then there exists a unique central conic $\mathcal{D}$ with foci $a_1$ and $a_2$ such that $(\mathbb{T},\mathcal{D})$ is a $3$-Poncelet pair.

Moreover,
\begin{itemize}
    \item[(a)] If $|a_1|<1$ and $|a_2|<1$, or if $|a_1|>1$ and $|a_2|>1$, then $\mathcal{D}$ is an ellipse.
    
    \item[(b)] If $|a_1|<1$ and $|a_2|>1$, then $\mathcal{D}$ is a hyperbola.
\end{itemize}

In both cases, the length of the major (transverse) axis of $\mathcal{D}$ is
\[
|1-\overline{a_1}a_2|.
\]
\end{theorem}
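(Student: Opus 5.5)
The plan is to apply the generalized Chapple--Euler relation \eqref{eq:genchappleeuler} of Theorem~\ref{thm:cayleychapplefuss}. We take $R=1$ and let the circle be $\mathbb T$. Rotate and translate coordinates so that $\mathcal D$ is in the standard position \eqref{eq:mathcalD}; the circle then has some center $O$. These rigid motions preserve all distances, so the distances from the center of the circle to the foci are simply $d_+=|a_1|$ and $d_-=|a_2|$, measured in the original complex coordinate. Let $2c=|a_1-a_2|$ be the focal distance. The key observation is that the ellipse relation $c^2=a^2-b^2$ and the hyperbola relation $c^2=a^2+b^2$ combine into the single identity
\[
\varepsilon b^2=a^2-c^2 .
\]
Hence \eqref{eq:genchappleeuler} with $R=1$ becomes
\[
(1-|a_1|^2)(1-|a_2|^2)=4a^2-|a_1-a_2|^2 .
\]

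Next I use the elementary identity
\[
|1-\overline{a_1}a_2|^2=(1-|a_1|^2)(1-|a_2|^2)+|a_1-a_2|^2,
\]
which follows by expanding both sides. With it, the $3$-Poncelet condition is equivalent to
\[
4a^2=|1-\overline{a_1}a_2|^2, \qquad\text{that is,}\qquad 2a=|1-\overline{a_1}a_2| .
\]
Since $a_1,a_2$ are not inverse points with respect to $\mathbb T$, we have $a_2\neq 1/\overline{a_1}$, so $a>0$. Because the criterion forces this value of $a$ for either type of conic, this gives the major (transverse) axis length in all cases.

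Substituting back determines the type of conic:
\[
\varepsilon b^2=a^2-c^2=\tfrac14(1-|a_1|^2)(1-|a_2|^2).
\]
The right-hand side is nonzero because $a_1,a_2\notin\mathbb T$.
\begin{itemize}
\item If both foci lie inside $\mathbb T$, or both lie outside, the right-hand side is positive. This forces $\varepsilon=1$ with $a>c$, so $\mathcal D$ is the ellipse with foci $a_1,a_2$ and $b^2=a^2-c^2$. This proves part (a).
\item If exactly one focus lies inside $\mathbb T$, the right-hand side is negative. This forces $\varepsilon=-1$ with $c>a$, so $\mathcal D$ is a hyperbola with $b^2=c^2-a^2>0$. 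This proves part (b).
\end{itemize}
In each case $b>0$ automatically, so the resulting conic is a genuine central conic with the prescribed foci.

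Existence and uniqueness then follow together. A central conic with given foci is determined by its type and the length $2a$ of its major (transverse) axis. The argument above shows that both the type and $a$ are forced by the $3$-Poncelet condition, and conversely that this conic satisfies the condition.

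The step I expect to need the most care is justifying that Cayley's criterion (Theorem~\ref{thm.2.1}), and hence \eqref{eq:genchappleeuler}, applies in every focal configuration. This includes the hyperbola case, where Theorem~\ref{thm:cayleychapplefuss} is obtained by replacing $b^2$ with $-b^2$. It also requires that $\mathbb T$ and $\mathcal D$ are in general position. I would check that the hypotheses $a_1,a_2\notin\mathbb T$ and non-inversion exclude the degenerate situations, namely $b=0$, $a=0$, or the circle tangent to the conic (a double intersection point). This amounts to verifying that $C_0\neq0$ and that $\det(\lambda\mathcal C+\mathcal D)$ has distinct roots for the conic constructed above. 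If some non-generic tangency cannot be excluded this way, I would treat it as a limiting case of the generic argument by continuity in $(a_1,a_2)$.
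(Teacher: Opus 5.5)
Your proposal is correct and is essentially the paper's proof. Both substitute $R=1$, $d_\pm=|a_1|,|a_2|$ into the generalized Chapple--Euler relation, read off $\varepsilon$ from the sign of $(1-|a_1|^2)(1-|a_2|^2)$, and obtain $4a^2=4\varepsilon b^2+|a_1-a_2|^2=|1-\overline{a_1}a_2|^2$. Your term $|a_1-a_2|^2$ is the right one; the paper's displayed $4|a_1-a_2|^2$ is a slip. The only differences are that you compute $a$ before the type and spell out the existence/uniqueness step, which the paper leaves implicit.

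The general-position caveat you raise is ignored in the paper too, but the hypotheses do not exclude tangency. For $a_{1,2}=\pm\sqrt3$ the forced conic is $x^2/4+y^2=1$, which is bitangent to $\mathbb T$ at $(0,\pm1)$, and there $\det(\lambda\mathcal C+\mathcal D)$ has a double root. So that case is not covered by Cayley's criterion as stated and genuinely needs your separate (e.g.\ limiting) argument.
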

\begin{proof}
Let $a$ and $b$ denote the lengths of the semi-major and semi-minor axes of $\mathcal D$, respectively. Setting $R=1$, $d_+=|a_1|$, and $d_-=|a_2|$ in the generalized Chapple--Euler relation \eqref{eq:genchappleeuler}, we obtain
\begin{equation}\label{eq:3ponceletb}
(1-|a_1|^2)(1-|a_2|^2)-4\varepsilon b^2=0,
\end{equation}
where $\varepsilon=1$ corresponds to an ellipse and $\varepsilon=-1$ to a hyperbola.

We distinguish two cases.

\medskip
\noindent
\textbf{Case (a).}
Suppose that either $|a_1|<1$ and $|a_2|<1$, or $|a_1|>1$ and $|a_2|>1$. Then
\[
(1-|a_1|^2)(1-|a_2|^2)>0.
\]
Since $b^2>0$, equation \eqref{eq:3ponceletb} forces $\varepsilon=1$. Hence $\mathcal D$ is an ellipse.

\medskip
\noindent
\textbf{Case (b).}
Suppose that $|a_1|<1$ and $|a_2|>1$. Then
\[
(1-|a_1|^2)(1-|a_2|^2)<0.
\]
Equation \eqref{eq:3ponceletb} therefore implies that $\varepsilon=-1$, and consequently $\mathcal D$ is a hyperbola.

Furthermore,
\begin{equation}\label{eq:a1a2c}
2c=|a_1-a_2|,
\end{equation}
where $c$ denotes the focal distance. Since
\begin{equation}\label{eq:a2b2c2}
    a^2=\varepsilon b^2+c^2,
\end{equation}
multiplying by 4 and substituting, we obtain
\begin{align*}
    4a^2&=(1-|a_1|^2)(1-|a_2|^2)+4|a_1-a_2|^2\\
    &=|1-\overline{a_1}a_2|^2
\end{align*}
Taking square roots yields
\[
2a=|1-\overline{a_1}a_2|,
\]
which is precisely the asserted length of the major axis (or transverse axis, in the hyperbolic case).
\end{proof}

\begin{theorem}\label{thm:ellorhyp4}
Let $\mathbb{T}$ denote the unit circle, and let $a_1,a_2\in\mathbb{C}$ be distinct points such that $a_1,a_2\notin\mathbb{T}$ and $a_1,a_2$ are not inverse points with respect to $\mathbb{T}$. Then there exists a unique central conic $\mathcal{D}$ with foci $a_1$ and $a_2$ such that $(\mathbb{T},\mathcal{D})$ is a $4$-Poncelet pair.

Moreover,
\begin{itemize}
    \item[(a)] If $|a_1|<1$ and $|a_2|<1$, then $\mathcal{D}$ is an ellipse.

    \item[(b)] If $|a_1|>1$ and $|a_2|>1$, then
    \begin{itemize}
        \item if $\operatorname{Re}(1-\overline{a_1}a_2)<0$, then $\mathcal{D}$ is an ellipse;
        \item if $\operatorname{Re}(1-\overline{a_1}a_2)>0$, then $\mathcal{D}$ is a hyperbola.
    \end{itemize}

    \item[(c)] If $|a_1|<1$ and $|a_2|>1$, then
    \begin{itemize}
        \item if
        \[
        \operatorname{Re}(1-\overline{a_1}a_2)\bigl(1-|a_1|^2|a_2|^2\bigr)<0,
        \]
        then $\mathcal{D}$ is an ellipse;

        \item if
        \[
        \operatorname{Re}(1-\overline{a_1}a_2)\bigl(1-|a_1|^2|a_2|^2\bigr)>0,
        \]
        then $\mathcal{D}$ is a hyperbola.
    \end{itemize}
\end{itemize}

In each case, the length of the major (transverse) axis of $\mathcal{D}$ is
\[
|1-\overline{a_1}a_2|
\sqrt{\frac{2-|a_1|^2-|a_2|^2}{1-|a_1|^2|a_2|^2}}.
\]
\end{theorem}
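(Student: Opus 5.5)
The plan is to reduce everything to the generalized Cayley--Fuss relation \eqref{eq:cayleyfuss} of Theorem~\ref{thm:cayleychapplefuss}, exactly as Theorem~\ref{thm:ellorhyp3} used \eqref{eq:genchappleeuler}. Put $R=1$, $d_+=|a_1|$, $d_-=|a_2|$, and let $c=|a_1-a_2|/2$ be the half focal distance. By \eqref{eq:a2b2c2}, $a^2=\varepsilon b^2+c^2$. The key idea is to treat the single real quantity $\beta:=\varepsilon b^2$ as the unknown: its sign decides ellipse versus hyperbola, and its value, together with $c$, gives $a$.

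Introduce the abbreviations
\[
P=(1-|a_1|^2)(1-|a_2|^2),\qquad S=\tfrac12\bigl(2-|a_1|^2-|a_2|^2\bigr),\qquad q=|a_1|^2|a_2|^2 .
\]
Then \eqref{eq:cayleyfuss} becomes $S(P-4\beta)+2(\beta+c^2)P=0$, which is \emph{linear} in $\beta$:
\[
\beta\,(4S-2P)=P\,(S+2c^2).
\]
A direct expansion gives $4S-2P=2(1-q)$. Hence, whenever $|a_1||a_2|\neq1$, we get the unique value
\[
\beta=\frac{P(S+2c^2)}{2(1-q)}.
\]
Next I simplify $S+2c^2=\tfrac12\bigl(2-|a_1|^2-|a_2|^2+|a_1-a_2|^2\bigr)=\operatorname{Re}(1-\overline{a_1}a_2)$. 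This yields the clean formula
\[
\varepsilon b^2=\frac{(1-|a_1|^2)(1-|a_2|^2)\,\operatorname{Re}(1-\overline{a_1}a_2)}{2\,(1-|a_1|^2|a_2|^2)} .
\]
Existence and uniqueness then follow. The foci fix the centre, the axis direction and $c$, and the relation fixes $\beta$ (hence $\varepsilon$ and $b$), provided $\beta\neq0$ and $q\neq1$.

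The case analysis is a sign count on this formula.
\begin{itemize}
\item[(a)] If $|a_1|,|a_2|<1$, then $P>0$ and $q<1$. Also $\operatorname{Re}(1-\overline{a_1}a_2)>0$ because $|\overline{a_1}a_2|<1$. So $\beta>0$ and $\mathcal D$ is an ellipse.
\item[(b)] If $|a_1|,|a_2|>1$, then $P>0$ and $q>1$. So $\operatorname{sign}\beta=-\operatorname{sign}\operatorname{Re}(1-\overline{a_1}a_2)$, which is exactly the stated dichotomy.
\item[(c)] If $|a_1|<1<|a_2|$, then $P<0$. So $\operatorname{sign}\beta=-\operatorname{sign}\bigl[\operatorname{Re}(1-\overline{a_1}a_2)(1-q)\bigr]$, again matching the statement.
\end{itemize}

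For the axis length, compute $4a^2=4\beta+4c^2$ and use the identity $|1-\overline{a_1}a_2|^2=P+|a_1-a_2|^2=P+4c^2$, already used in Theorem~\ref{thm:ellorhyp3}. After clearing the denominator $1-q$, the claim reduces to
\[
P(S+2c^2)+2c^2(1-q)=S(P+4c^2).
\]
This follows from $P+1-q=2S$. Therefore $4a^2=|1-\overline{a_1}a_2|^2\,\dfrac{2-|a_1|^2-|a_2|^2}{1-|a_1|^2|a_2|^2}$, and taking square roots gives the stated formula.

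The main obstacle is not the algebra but the degenerate loci that the hypotheses do not visibly exclude. One is $|a_1||a_2|=1$: this is weaker than $a_2=1/\overline{a_1}$, and there the linear equation either has no solution or is an identity. The other is $\operatorname{Re}(1-\overline{a_1}a_2)=0$, where $\beta=0$. I would handle both by observing that the sign conditions in (b) and (c) are stated as strict inequalities, so these boundary cases are implicitly excluded. I would also remark that $1-q\neq0$ is needed; either the hypothesis should be read to include it, or one checks that the remaining case $q=1$ forces the inverse-point configuration or a degenerate conic. Finally, I would confirm that in each case the resulting $a^2$ is positive and satisfies $a\ge b$ in the appropriate sense, so that $\mathcal D$ is a genuine central conic.
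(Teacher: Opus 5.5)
Your route is the paper's: you substitute $R=1$, $d_+=|a_1|$, $d_-=|a_2|$ into \eqref{eq:cayleyfuss} and solve the relation, which is linear in $\varepsilon b^2$, to get exactly \eqref{eq:a1a2}. You then read the type off its sign and obtain $4a^2$ from $a^2=\varepsilon b^2+c^2$. All of your algebra is correct, and you are more careful than the paper about degenerate loci. However, the way you propose to dispose of them does not work.

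The case $|a_1||a_2|=1$ never ``forces the inverse-point configuration or a degenerate conic.'' One focus is then inside $\mathbb T$ and one outside, so $P<0$. Writing $\overline{a_1}a_2=e^{i\theta}$ gives $\operatorname{Re}(1-\overline{a_1}a_2)=1-\cos\theta>0$ unless $a_2=1/\overline{a_1}$. Hence your equation reads $0\cdot\beta=P\operatorname{Re}(1-\overline{a_1}a_2)\neq0$ and has no solution. For example, $a_1=\tfrac12$, $a_2=-2$ satisfies every hypothesis and admits no conic at all. Only your first option, adding $|a_1||a_2|\neq1$ to the hypotheses, is viable. Similarly, $\operatorname{Re}(1-\overline{a_1}a_2)=0$ (e.g.\ $a_1=2$, $a_2=\tfrac12+2i$) is not ``implicitly excluded.'' The existence claim is asserted for all admissible foci, yet there $\beta=0$ is forced, so existence genuinely fails.

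More seriously, your closing step, ``confirm that in each case the resulting $a^2$ is positive,'' would fail. By your own formula, $4a^2=|1-\overline{a_1}a_2|^2\,(2-|a_1|^2-|a_2|^2)/(1-|a_1|^2|a_2|^2)$. In case (c) with $|a_1||a_2|<1<\tfrac12(|a_1|^2+|a_2|^2)$, the last two factors have opposite signs. For $a_1=\tfrac12$, $a_2=\tfrac32$ one gets $\varepsilon b^2=-\tfrac{15}{56}$, $c^2=\tfrac14$, $a^2=-\tfrac1{56}$. So the only solution of the Cayley relation is an imaginary conic, and there is no real hyperbola (or ellipse) with these foci forming a $4$-Poncelet pair with $\mathbb T$, although (c) asserts a hyperbola.

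The paper's proof has the same hole, since it never checks $a^2>0$. This is a defect of the statement rather than of your method, and no argument can close it. Existence and the classification hold once one also assumes $|a_1||a_2|\neq1$, $\operatorname{Re}(1-\overline{a_1}a_2)\neq0$ and $(2-|a_1|^2-|a_2|^2)(1-|a_1|^2|a_2|^2)>0$. The last condition is automatic in (a), in (b), and in (c) when $|a_1||a_2|>1$ (by AM--GM), so only part of the hyperbolic subcase of (c) is lost. Under these hypotheses your proof is complete.
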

\begin{proof}
Let $a$ and $b$ denote the lengths of the semi-major and semi-minor axes of $\mathcal D$, respectively. Setting $R=1$, $d_+=|a_1|$, and $d_-=|a_2|$ in the Cayley--Fuss relation \eqref{eq:cayleyfuss}, we obtain
\begin{equation}\label{eq:a1a2beta}
\left(1-\frac{|a_1|^2+|a_2|^2}{2}\right)
\left((1-|a_1|^2)(1-|a_2|^2)-4\varepsilon b^2\right)
+2a^2(1-|a_1|^2)(1-|a_2|^2)=0,
\end{equation}
where $\varepsilon=1$ corresponds to an ellipse and $\varepsilon=-1$ to a hyperbola.

Substituting \eqref{eq:a1a2c} into \eqref{eq:a1a2beta}, we obtain
\begin{align}
4\varepsilon b^2
&=(1-|a_1|^2)(1-|a_2|^2)
\left(1+\frac{|1-\overline{a_1}a_2|^2}
{1-|a_1|^2|a_2|^2}\right)\label{eq:a1a2ell}\\
&=(1-|a_1|^2)(1-|a_2|^2)
\frac{2\operatorname{Re}(1-\overline{a_1}a_2)}
{1-|a_1|^2|a_2|^2}.\label{eq:a1a2}
\end{align}

The sign of the right-hand side of \eqref{eq:a1a2} determines the sign of $\varepsilon$, exactly as in the proof of Theorem~\ref{thm:ellorhyp3}. Whenever $|a_1|<1$ and $|a_2|<1$, the right-hand side of \eqref{eq:a1a2ell} is positive in this case, and hence $\varepsilon=1$. When $|a_1|>1$ and $|a_2|>1$, the factor
\[
\frac{(1-|a_1|^2)(1-|a_2|^2)}{1-|a_1|^2|a_2|^2}
\]
is negative, so the sign of \eqref{eq:a1a2} is determined by
\[
\operatorname{Re}(1-\overline{a_1}a_2).
\]
Finally, if $|a_1|<1$ and $|a_2|>1$, then
\[
(1-|a_1|^2)(1-|a_2|^2)<0,
\]
and therefore the sign of \eqref{eq:a1a2} is determined by
\[
\frac{\operatorname{Re}(1-\overline{a_1}a_2)}
{1-|a_1|^2|a_2|^2},
\]
equivalently, by
\[
\operatorname{Re}\,(1-\overline{a_1}a_2)(1-|a_1|^2|a_2|^2),
\]
yielding precisely the classification stated in parts (a)--(c).

Finally, using \eqref{eq:a1a2c}, the identity \eqref{eq:a2b2c2} and equations \eqref{eq:a1a2ell}--\eqref{eq:a1a2}, we obtain
\[
4a^2=
|1-\overline{a_1}a_2|^2
\frac{2-|a_1|^2-|a_2|^2}
{1-|a_1|^2|a_2|^2}.
\]
Taking square roots gives the asserted length of the major axis (or transverse axis, in the hyperbolic case).
\end{proof}

\begin{remark}
The major-axis length formulas in Theorem \ref{thm:ellorhyp3} and \ref{thm:ellorhyp4} were previously obtained in \cite{daepp2002ellipses} and \cite{Fujimura2013}, respectively, in the setting of Blaschke products, where both foci lie in the unit disk. The present proofs derive the same formulas directly via Cayley's criteria and therefore apply uniformly to all central conics forming a 3- and 4-Poncelet pair with the unit circle.
\end{remark}

Theorems \ref{thm:ellorhyp3}--\ref{thm:ellorhyp4} show that the type of the central conic forming the 3- and 4-Poncelet pair with the unit circle is completely characterized by the location of its foci with respect to the unit circle. In the next section we use Siebeck--Marden theorem to show that the focal parameters and a unimodular parameter also determine the vertices of every polygon circumscribed about the central conic through explicit symmetric parametrization.

\section{The Siebeck--Marden Theorem and Symmetric Parametrizations of Cyclic Polygons
}\label{sec:siebeckmarden}
\noindent
The purpose of this section is to establish a direct connection between Siebeck--Marden theorem and the classical Blaschke-product parametrization of $p$-gons inscribed in $\mathbb T$ and circumscribed about an algebraic curve of class $p-1$. This perspective reveals that the Blaschke-product parametrization can be recovered naturally from the Siebeck–Marden theorem.

\subsection{The Siebeck--Marden theorems}
\begin{theorem}[Siebeck 1864, Marden 1945 \cite{Marden1945} I]\label{thm:siebeckmarden3}
The zeros of the partial fraction
\begin{equation*}
    F(z)=\frac{m_1}{z-z_1}+\frac{m_2}{z-z_2}+\frac{m_3}{z-z_3}, \qquad m_1m_2m_3 \ne 0
\end{equation*}
where $z_1$, $z_2$, $z_3$ are three distinct noncollinear points  lie at the foci of the conic which touches the line segments $(z_2,z_3)$, $(z_3,z_1)$ and $(z_1,z_2)$ in the points $\xi_1$, $\xi_2$ and $\xi_3$ that divide these segments in the ratio $m_2:m_3$, $m_3:m_1$ and $m_1:m_2$, respectively. If $n = m_1+m_2+m_3 \neq 0$, this conic is an ellipse or hyperbola according as $nm_1m_2m_3>0$ or $< 0$. If $n = 0$, the conic is a parabola whose axis is parallel to the line joining the origin to the point $v = m_1z_1+m_2z_2+m_3z_3$. 
\end{theorem}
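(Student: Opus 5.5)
Since all three assertions are invariant under a real affine change of variables, I first reduce to a normal form. The key device is the zero-sum identity
\[
F(z)=\frac{m_1}{z-z_1}+\frac{m_2}{z-z_2}+\frac{m_3}{z-z_3}=\frac{N(z)}{(z-z_1)(z-z_2)(z-z_3)},
\]
where $N$ is a polynomial of degree at most two with leading coefficient $n=m_1+m_2+m_3$. I would prove the statement directly from the classical characterization of the foci of a conic tangent to three lines. Concretely, I use the following characterization, which is verified in one line from the reflection property. A conic is tangent to the three sides of a triangle with foci $f_1,f_2$ if and only if the points $f_1$ and $f_2$ are isogonal conjugates with respect to $\triangle z_1z_2z_3$. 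In complex form this says
\[
\frac{(f_1-z_j)(f_2-z_j)}{(z_k-z_j)(z_l-z_j)}\in\mathbb R \qquad \text{for each } j.
\]
Here $\{j,k,l\}=\{1,2,3\}$.

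\textbf{Steps.} (1) In the case $n\neq0$, let $f_1,f_2$ be the roots of $N$, so that $N(z)=n(z-f_1)(z-f_2)$. Evaluating the residue of $F$ at $z_j$ gives
\[
m_j=\frac{n(z_j-f_1)(z_j-f_2)}{(z_j-z_k)(z_j-z_l)}.
\]
Since $m_j$ and $n$ are real, the isogonal-conjugacy condition holds at every vertex. Hence $f_1,f_2$ are the foci of an inconic, or of an exconic when $f_i$ lie outside the triangle.

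(2) To identify the tangency points, I use the fact that the point where a conic with foci $f_1,f_2$ touches line $z_kz_l$ divides the segment as the ratio of the "focal weights"
\[
|z_k-f_1||z_k-f_2|\,/\,|z_k z_j||z_k z_l|\text{-type quantities}.
\]
I would derive this from the reflection property, namely that the tangency point is the intersection of the line with the segment from $f_1$ to the reflection of $f_2$. A cleaner route is to note that the touching points are determined by the line and the foci, and to check that $\xi_j=(m_kz_l+m_lz_k)/(m_k+m_l)$ satisfies the tangency (reflection) condition using the formula for $m_j$ above. This computation reduces to checking a real cross-ratio condition.

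(3) For the type, the conic is an ellipse when $f_1,f_2$ lie on the same side of every tangent line and a hyperbola otherwise. Equivalently, it is determined by the sign of
\[
(z_k-f_1)(z_k-f_2)/(\dots)
\]
relative to the triangle's orientation. Taking the product of the three residue formulas gives
\[
m_1m_2m_3=\frac{n^3\prod_j(z_j-f_1)(z_j-f_2)}{-\prod_{j<k}(z_j-z_k)^2}.
\]
From this, the sign of $nm_1m_2m_3$ is tied to whether the tangency points are interior to the sides, that is, whether all of $m_k/m_l>0$. One then checks that the conic is an ellipse exactly when an even number of the touching points lie outside their segments.

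(4) For the case $n=0$, $N$ is linear with a single root $f$, which is the finite focus. The leading coefficient of $N$ is
\[
-\sum m_j(z_k+z_l)=\sum m_jz_j=v,
\]
using $n=0$. The second focus escapes to infinity in the direction of $v$, so the conic is a parabola with axis parallel to $v$. This follows as a limit of step (1), or directly from the same isogonal relation with one point at infinity.

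\textbf{Main obstacle.} I expect the main difficulty to be the sign bookkeeping in step (3), matching the ellipse/hyperbola dichotomy to the sign of $nm_1m_2m_3$. I would first try the affine normalization $z_1=0$, $z_2=1$ with $z_3$ in the upper half-plane, and then use continuity. The type can only change when $n$ or some $m_j$ passes through zero, at which point the conic degenerates into a parabola or a tangency point runs to a vertex. So it suffices to check one representative in each sign region, for example the Steiner inellipse with all $m_j=1$.
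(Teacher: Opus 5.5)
The paper does not prove this statement; it is quoted from Marden. Your proposal therefore has to stand on its own.

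Its first half is sound. The residue formula $m_j=n(z_j-f_1)(z_j-f_2)/((z_j-z_k)(z_j-z_l))$ with $m_j,n\in\mathbb R$ is exactly the statement that $f_1,f_2$ are isogonal conjugates. Hence they are the foci of a conic $\mathcal D$ tangent to the three sidelines. Your computation of the linear coefficient $v$ for $n=0$ is also correct. Your opening claim of invariance under real affine maps is false: foci and zeros of $F$ transform correctly only under similarities $z\mapsto\alpha z+\beta$ and conjugation. This is harmless, because the normalization you actually use is a similarity.

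The gap is in steps (2)--(3), which carry the real content of the theorem.

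\textbf{Step (2).} It is only announced, not carried out. The focal-weight ratio you sketch gives only $|m_k/m_l|$, which fixes $\xi_j$ only up to the interior/exterior ambiguity.

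\textbf{Step (3).} It rests on a false criterion. Since $(m_2m_3)(m_3m_1)(m_1m_2)>0$, the number of touching points outside their segments is always $0$ or $2$. So ``ellipse iff an even number lie outside'' would make every such conic an ellipse. Take $z_1=0$, $z_2=1$, $z_3=i$:
\begin{itemize}
\item $m=(-1,2,2)$ gives foci $(1\pm\sqrt7)(1+i)/6$, on opposite sides of the sideline $x+y=1$, so the conic is a hyperbola.
\item $m=(-3,1,1)$ gives foci $(1\pm\tfrac{1}{\sqrt2})+i(1\mp\tfrac{1}{\sqrt2})$, both on the same side, so the conic is an ellipse.
\end{itemize}
The inside/outside pattern of touching points is identical in the two cases; the sign of $n$ is what separates them.

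\textbf{Continuity fallback.} It does not repair this. The Steiner inellipse represents only the region where all $m_j$ share a sign. The regions where one $m_j$ has the opposite sign, split by $n=0$, are exactly where both types occur, and you check neither. Constancy of the type on each region is also asserted rather than proved.

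\textbf{The missing idea.} You need the complex form of the reflection property \emph{with its sign}. A line through $q$ with direction $u$ is tangent at $q$ to a conic with foci $f_1,f_2$ iff $(q-f_1)(q-f_2)/u^2\in\mathbb R$. That conic is the confocal ellipse through $q$ if this number is negative, and the confocal hyperbola if it is positive. Now evaluate $N$ at $\xi_1=(m_3z_2+m_2z_3)/(m_2+m_3)$. The identity $m_2(\xi_1-z_3)+m_3(\xi_1-z_2)=0$ kills two of the three terms, giving
\[
\frac{(\xi_1-f_1)(\xi_1-f_2)}{(z_3-z_2)^2}=\frac{N(\xi_1)}{n\,(z_3-z_2)^2}=-\frac{m_1m_2m_3}{n\,(m_2+m_3)^2},
\]
and likewise at $\xi_2,\xi_3$. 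A line not through a focus is tangent to only one member of a confocal family. Step (1) makes that member the same $\mathcal D$ for all three sidelines. Hence $\mathcal D$ touches them at $\xi_1,\xi_2,\xi_3$, and it is an ellipse or hyperbola according as $nm_1m_2m_3>0$ or $<0$. No continuity argument is needed.

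For $n=0$ the same evaluation yields $v(\xi_1-f)/(z_3-z_2)^2\in\mathbb R$. This is the parabola tangency condition: the tangent bisects the angle between $qf$ and the axis direction. Your step (4) needs this check in place of the unproved limiting argument.
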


\begin{theorem}[Siebeck 1864, Marden 1945 \cite{Marden1945} II]\label{thm:siebeckmardenp}
The zeros of the partial fraction
\begin{equation*}
    F(z):=\sum_{j=1}^{p}\frac{m_j}{z-z_j}, \qquad \prod_{j=1}^p m_j \ne 0
\end{equation*}
lie at the foci of the curve $C(z_1,\dots,z_p;m_1,\dots ,m_p)$ of class $p-1$ which touches each of the $p(p-1)/2$ line segments $(z_j,z_k)$ in a point dividing it in the ratio $m_j:m_k$. 
\end{theorem}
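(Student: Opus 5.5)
The plan is to prove the statement by working in line (tangential) coordinates. I will write down explicitly a curve of class $p-1$ and then use Plücker's definition of foci: the real foci of a curve of class $n$ are the real points $z$ such that both isotropic lines through $z$ are tangent to the curve. Throughout I take the weights $m_j$ real, as in the Siebeck--Marden setting, and identify $z=x+iy$ with $(x,y)$.

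\textbf{Step 1: the curve.} For a line $\ell:\ ux+vy+w=0$ with homogeneous coordinates $(u,v,w)$, set $L_\ell(z_k)=ux_k+vy_k+w$. Define
\[
\Phi(u,v,w)=\sum_{j=1}^{p} m_j\prod_{k\neq j}L_\ell(z_k).
\]
This form is homogeneous of degree $p-1$ in $(u,v,w)$. Hence $\Phi=0$ is the tangential equation of a curve $C$ of class $p-1$. (If $\Phi$ happens to be reducible, I would still speak of its foci in the same Plücker sense.)

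\textbf{Step 2: tangency to the segment lines.} Let $\ell_{jk}$ be the line through $z_j$ and $z_k$, so $L(z_j)=L(z_k)=0$. In $\Phi$, the $i$-th term with $i\neq j,k$ contains both factors $L(z_j)$ and $L(z_k)$. The $j$-th term contains $L(z_k)$, and the $k$-th term contains $L(z_j)$. Thus $\Phi(\ell_{jk})=0$, so $\ell_{jk}$ is tangent to $C$.

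The point of contact is the point whose homogeneous coordinates are $\nabla_{(u,v,w)}\Phi$ at $\ell_{jk}$. In the gradient, every term with two vanishing factors drops out. Only two contributions remain:
\[
m_j P\,\hat z_k+m_k P\,\hat z_j, \qquad P=\prod_{l\neq j,k}L(z_l), \qquad \hat z=(x,y,1).
\]
This gives the affine point
\[
\xi_{jk}=\frac{m_k z_j+m_j z_k}{m_j+m_k}=z_j+\frac{m_j}{m_j+m_k}(z_k-z_j),
\]
which divides $(z_j,z_k)$ in the ratio $m_j:m_k$, as claimed. I still need to check two things here:
\begin{itemize}
\item $P\neq0$ generically, i.e.\ no other $z_l$ lies on $\ell_{jk}$;
\item the degenerate case $m_j+m_k=0$, where the contact point is at infinity.
\end{itemize}
I would state the second as a remark.

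\textbf{Step 3: the foci.} The lines through the circular point $I=(1,i,0)$ are exactly the lines $x+iy-c=0$ with $c\in\mathbb C$. For such a line, $L(z_k)=z_k-c$. So this isotropic line is tangent to $C$ if and only if
\[
\sum_j m_j\prod_{k\neq j}(z_k-c)=0.
\]
Dividing by $(-1)^{p-1}\prod_k(c-z_k)$, this is equivalent to $F(c)=0$. The division is harmless: if $c=z_k$, the sum reduces to $m_k\prod_{l\neq k}(z_l-z_k)\neq0$.

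Likewise, the lines $x-iy-\bar c=0$ through $J=(1,-i,0)$ are tangent if and only if $\sum_j m_j\prod_{k\neq j}(\bar z_k-\bar c)=0$. Because the $m_j$ are real, this is the complex conjugate of the previous condition. Hence both conditions hold or fail together.

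The two isotropic lines $x+iy=c$ and $x-iy=\bar c$ meet at the real point $c$. Therefore $c$ is a focus of $C$ if and only if $F(c)=0$, and the $p-1$ zeros of $F$ (counted with multiplicity) are exactly the real foci of $C$.

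The main obstacle I expect is Step 2. The care needed there is in justifying the contact-point formula via the gradient of the tangential equation, and in handling the degenerate configurations (collinear $z_l$, $m_j+m_k=0$, repeated zeros of $F$). Steps 1 and 3 are essentially bookkeeping once the correct tangential form $\Phi$ is written down.
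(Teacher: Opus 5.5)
The paper gives no proof of this statement. It quotes the result from Siebeck and Marden and uses it as a black box: the $p=3$ case (Theorem~\ref{thm:siebeckmarden3}), together with uniqueness of the inconic, in Theorem~\ref{thm:symmpara}, and the general case to interpret the zeros $a_j$ of Theorem~\ref{thm:general-symmetric} as foci in Section~\ref{sec:geomapp}.

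Your proposal is a correct, self-contained proof, and it is in substance the classical one. It writes the tangential equation $\Phi=\sum_j m_j\prod_{k\neq j}(ux_k+vy_k+w)=0$, reads the contact points off $\nabla\Phi$, and applies Pl\"ucker's definition of foci through the isotropic pencils. The computations check out. At $\ell_{jk}$ only the pairs $(i,k')=(j,k),(k,j)$ survive in the gradient, giving $P\,(m_j\hat z_k+m_k\hat z_j)$. This is nonzero whenever no third $z_l$ lies on $\ell_{jk}$, which is automatic for points of $\mathbb T$.

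Moreover, $\Phi(1,i,-c)=(-1)^{p-1}P(c)$, where $P$ is exactly the numerator polynomial of Lemma~\ref{lem:coefficientidentity}. So your route shows directly that the tangents from $I$ to the Siebeck--Marden curve are the lines $x+iy=a_j$. The algebraic parametrization of Section~\ref{sec:siebeckmarden} and the geometric meaning of the $a_j$ therefore come from one and the same polynomial, which the bare citation does not make visible.

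There are three points to tighten.

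\textbf{The line at infinity.} The pencil through $I$ also contains the line at infinity $(0,0,1)$, and $\Phi(0,0,1)=\sum_j m_j$. So if $\sum_j m_j=0$, that line is tangent and $F$ has at most $p-2$ zeros. Your count of $p-1$ real foci therefore needs $\sum_j m_j\neq0$; the excluded case is the parabolic case of Theorem~\ref{thm:siebeckmarden3}. The inclusion of the zeros of $F$ among the foci is unaffected.

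\textbf{Reality of the $m_j$.} This is a genuine hypothesis, not a convention: it is exactly what makes the $J$-condition the complex conjugate of the $I$-condition. The statement in the paper leaves it implicit. It likewise leaves implicit the distinctness of the $z_k$, which your division remark uses.

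\textbf{Uniqueness.} The definite article in \emph{the curve} presupposes uniqueness, and your construction only gives existence. When $\Phi$ is irreducible and no three $z_l$ are collinear, uniqueness follows from B\'ezout in the dual plane. Another class-$(p-1)$ curve with the same contacts would meet $\Phi=0$ with multiplicity at least $2$ at each of the $\binom{p}{2}$ points $\ell_{jk}$. That gives at least $p(p-1)>(p-1)^2$ intersections, forcing a common component.
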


\subsection{The triangular case}
The following theorem is a special case of one of the main results of this paper.

\begin{theorem}[Parametrization of Poncelet Triangles]\label{thm:symmpara}
Let $z_1,z_2,z_3\in\mathbb T$ be the vertices of a triangle circumscribed about a central conic with foci $a_1, a_2 \in\mathbb C$. Then there exists $\lambda\in\mathbb T$ such that
\begin{align}
z_1+z_2+z_3
&=
a_1+a_2+\overline{a_1}\,\overline{a_2}\lambda,
\label{eq:s1}
\\
z_1z_2+z_2z_3+z_3z_1
&=
a_1a_2+
(\overline{a_1}+\overline{a_2})\lambda,
\label{eq:s2}
\\
z_1z_2z_3
&=
\lambda.
\label{eq:s3}
\end{align}
\end{theorem}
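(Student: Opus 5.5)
The plan is to use the Siebeck--Marden theorem (Theorem~\ref{thm:siebeckmarden3}) to identify $a_1,a_2$ as the zeros of a partial fraction built from the vertices, and then to read off the symmetric functions by comparing coefficients.

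\textbf{Step 1: identify the weights.} The conic is tangent to the three sides. Let the tangency points divide $(z_2,z_3)$, $(z_3,z_1)$, $(z_1,z_2)$ in ratios $m_2:m_3$, $m_3:m_1$, $m_1:m_2$. Such real weights $m_j\neq0$ exist, up to a common scale, by Ceva's theorem: the Brianchon point of an inscribed conic makes the cevians concurrent, and for a hyperbola some of the division points are external, which gives signed weights. By uniqueness of the conic tangent to three lines with prescribed tangency points, the Siebeck--Marden theorem then says that $a_1,a_2$ are the zeros of
\[
F(z)=\sum_{j=1}^3\frac{m_j}{z-z_j}.
\]
Equivalently,
\[
\sum_j m_j\prod_{k\ne j}(z-z_k)=n(z-a_1)(z-a_2), \qquad n=m_1+m_2+m_3.
\]
Assume first that $n\neq0$, since the conic is central.

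\textbf{Step 2: pin down $m_j$ via the unit circle.} The key observation I expect to need is that for a conic inscribed in a triangle inscribed in $\mathbb T$, the weights are forced to satisfy
\[
m_j\propto\frac{1}{|1-\overline{a_1}z_j|^2}\quad\text{or}\quad m_j\propto\prod_{k\neq j}|z_j-z_k|^{-1}\cdot(\ldots).
\]
Rather than computing these directly, I will avoid them altogether. The idea is to use the reflection symmetry: on $\mathbb T$ we have $\overline{z_j}=1/z_j$, so conjugating the identity from Step 1 gives a second polynomial identity. Set $\lambda=z_1z_2z_3$ and $s_1=z_1+z_2+z_3$, $s_2=z_1z_2+z_2z_3+z_3z_1$. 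Then $\overline{s_1}=s_2/\lambda$ and $\overline{s_2}=s_1/\lambda$.

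\textbf{Step 3: compare coefficients.} Expanding Step 1 gives
\[
\sum_j m_j z_j = n(a_1+a_2)-\ldots .
\]
More usefully, the identity says that the polynomial $P(z)=\prod_j(z-z_j)$ satisfies
\[
P'(z)\cdot(\text{weighted})=\ldots .
\]
The cleanest route I would try is the following. Because the conic is determined by its foci (Theorem~\ref{thm:ellorhyp3}), the triangle is a Poncelet triangle. Define
\[
Q(z)=\prod_j(z-z_j)-\lambda=z^3-s_1z^2+s_2z-2\lambda .
\]
I would instead show directly that the linear system
\[
s_1=a_1+a_2+\overline{a_1a_2}\lambda,\qquad s_2=a_1a_2+(\overline{a_1}+\overline{a_2})\lambda
\]
is consistent with the reflection relations above; this consistency is automatic, since conjugating the first equation and multiplying by $\lambda$ gives the second.

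\textbf{Step 4: verify tangency.} It then remains to show that the triangle whose roots solve
\[
z^3-s_1z^2+s_2z-\lambda=0
\]
is the Poncelet triangle. Equivalently, one recognizes this cubic as
\[
z(z-a_1)(z-a_2)-\lambda(1-\overline{a_1}z)(1-\overline{a_2}z)=0,
\]
that is, $B(z)=\lambda$ for the Blaschke-type rational function. Expanding confirms the coefficients. The Siebeck--Marden theorem is then applied to $F=P'/P$-type weights: $B'(z)/B(z)$ has the partial fraction form with residues $m_j=1/B'(z_j)\cdot\lambda/z_j>0$ when $|a_i|<1$. The zeros of $B'$ other than those coming from poles are then $a_1,a_2$ via the identity
\[
\frac{B'}{B}=\frac1z+\sum_i\frac{1-|a_i|^2}{(z-a_i)(1-\overline{a_i}z)}.
\]
By uniqueness of the conic with given foci (Theorem~\ref{thm:ellorhyp3}), the family $\{B=\lambda\}$ exhausts the Poncelet triangles, so the given triangle equals $\{B=\lambda\}$ with $\lambda=z_1z_2z_3$. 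This also covers foci outside $\mathbb T$, since the construction is algebraic and the proof never needs $|a_i|<1$.

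\textbf{Main obstacle.} The hardest step is this last identification: showing that the zeros of $\sum m_j/(z-z_j)$, with the weights the Siebeck--Marden theorem requires, coincide with the critical points of $B$, uniformly in all focal configurations. My plan is to verify it algebraically. I will show that the zeros of $\sum_j m_j/(z-z_j)$ are $a_1,a_2$ by writing the partial fraction of $\lambda^{-1}\bigl(z(z-a_1)(z-a_2)-\lambda(1-\overline{a_1}z)(1-\overline{a_2}z)\bigr)$ against $(z-a_1)(z-a_2)$.
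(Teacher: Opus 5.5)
There is a genuine gap: \eqref{eq:s1} is never actually derived. Your Step 1 is the same setup the paper uses. In Step 2 you name the right idea, namely conjugating the Siebeck--Marden identity using $\overline{z_j}=1/z_j$ and the reality of the $m_j$, but you then abandon it.

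\begin{itemize}
\item Step 3 only checks that \eqref{eq:s1} implies \eqref{eq:s2}. That is true, but it presupposes \eqref{eq:s1}, which is what has to be proved.
\item Step 4 reverses the logic: it starts from $B(z)=\lambda$ and tries to show that its roots are the given triangle. That requires the roots of $B=\lambda$ to be distinct and unimodular, with real (positive) weights. This is the Daepp--Gorkin--Mortini theorem, and it rests on $|a_1|,|a_2|<1$.
\item You invoke $m_j>0$ ``when $|a_i|<1$'' and then assert that this hypothesis is never used. As the remark after Corollary~\ref{cor:mardenblaschke} notes, for foci outside $\mathbb T$ the equation $B=\lambda$ can have roots off $\mathbb T$. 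So your claim that the family $\{B=\lambda\}$ covers all Poncelet triangles is unsupported exactly in the cases the theorem is meant to include.
\item The auxiliary claims are also off. $B'/B$ has its poles at $0$, $a_i$, $1/\overline{a_i}$, not at the $z_j$; the relevant function is $b(z)/(B(z)-\lambda)$ with $b=B/z$. Also, $a_1,a_2$ are zeros of $b$, not critical points of $B$.
\item Your ``main obstacle'' is left unresolved.
\end{itemize}

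The missing step is a short coefficient comparison in your own Step 1 identity, and this is exactly the paper's proof. With $s_1=z_1+z_2+z_3$ and $\lambda=z_1z_2z_3$, the coefficients of $z$ and of $z^0$ give
\[
n(a_1+a_2)=\sum_{j}m_j(s_1-z_j)=ns_1-\sum_j m_jz_j,
\qquad
na_1a_2=\sum_j m_j\frac{\lambda}{z_j}=\lambda\sum_j m_j\overline{z_j}.
\]
Because $m_j$ and $n$ are real, conjugating the second relation and multiplying by $\lambda$ gives $n\,\overline{a_1}\,\overline{a_2}\,\lambda=\sum_j m_jz_j$. Adding this to the first relation and dividing by $n\neq0$ yields \eqref{eq:s1}. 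Then \eqref{eq:s2} follows from $s_2=\lambda\overline{s_1}$, as you observed, and \eqref{eq:s3} is the definition of $\lambda$.

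This argument needs no explicit weights, no Blaschke product and no condition on $|a_i|$. That is why the parametrization holds uniformly for ellipses and hyperbolas.
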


\begin{proof}
Let $\mathcal D$ denote the conic inscribed in the triangle $\triangle z_1z_2z_3$. Suppose that the sidelines of the triangle  touches $\mathcal D$ in the ratio determined by $m_1,m_2,m_3\in \mathbb R$. 

From the uniqueness of the inconic $\mathcal{D}$ (see \cite[Corollary 2.1]{Dragovic-Murad2026}) and Theorem \ref{thm:siebeckmarden3} it follows that the zeros of $F(z)$ are precisely the foci $a_1,a_2$.

Writing $F(z)=0$ in polynomial form yields
\begin{equation}\label{eq:compquad}
    nz^2-pz+q=0,
\end{equation}
where
\begin{align*}
n&=m_1+m_2+m_3\\
p&=m_1(z_2+z_3)+m_2(z_3+z_1)+m_3(z_1+z_2)\\
q&=m_1z_2z_3+m_2z_3z_1+m_3z_1z_2.
\end{align*}

Since the roots of \eqref{eq:compquad} are $a_1$ and $a_2$, Vieta's formulas give
\begin{align}
a_1+a_2&=\frac{p}{n},
\label{eq:vieta1}
\\
a_1a_2&=\frac{q}{n}.
\label{eq:vieta2}
\end{align}
Define
\[
\lambda=z_1z_2z_3.
\]
The assumption $z_1,z_2,z_3\in\mathbb T$ implies that $\lambda\in \mathbb T$.

Using $|z_k|=1$, we obtain
\[
\overline q\,\lambda
=
m_1z_1+m_2z_2+m_3z_3.
\]
This gives
\begin{equation}\label{eq:pqlambda}
p+\overline q\,\lambda = n(z_1+z_2+z_3).
\end{equation}
Note that, by Theorem \ref{thm:siebeckmarden3}, $n\neq 0$. So, dividing \eqref{eq:pqlambda} by $n$ and using
\eqref{eq:vieta1}--\eqref{eq:vieta2},
we obtain \eqref{eq:s1}.

Next, using the relation
\[
z_1z_2+z_2z_3+z_3z_1
=
\lambda
(\overline z_1+\overline z_2+\overline z_3),
\]
and taking conjugates in \eqref{eq:s1} and using
$\lambda \overline{\lambda}=|\lambda|^2=1$ we obtain \eqref{eq:s2}, while \eqref{eq:s3} follows directly from
the definition of $\lambda$.
\end{proof}

An important consequence of Theorem \ref{thm:symmpara} is that the
classical degree-3 Blaschke-product can be recovered directly from Siebeck--Marden theorem.

\begin{corollary}
\label{cor:mardenblaschke}
With the notation of Theorem \ref{thm:symmpara}, the vertices
$z_1,z_2,z_3$ satisfy
\begin{equation}\label{eq:classblaschke}
    z\frac{z-a_1}{1-\overline{a_1} z}
 \frac{z-a_2}{1-\overline{a_2} z}
=
\lambda.
\end{equation}
\end{corollary}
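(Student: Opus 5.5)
The plan is to form the monic cubic whose roots are $z_1,z_2,z_3$ and substitute the symmetric functions from Theorem~\ref{thm:symmpara}. By \eqref{eq:s1}--\eqref{eq:s3}, each $z_k$ is a root of
\[
P(z)=z^3-(z_1+z_2+z_3)z^2+(z_1z_2+z_2z_3+z_3z_1)z-z_1z_2z_3 .
\]
Substituting, this becomes
\[
P(z)=z^3-\bigl(a_1+a_2+\overline{a_1}\,\overline{a_2}\lambda\bigr)z^2+\bigl(a_1a_2+(\overline{a_1}+\overline{a_2})\lambda\bigr)z-\lambda .
\]

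Next I would separate the terms of $P(z)$ that contain $\lambda$ from those that do not. The $\lambda$-free part is
\[
z^3-(a_1+a_2)z^2+a_1a_2z=z(z-a_1)(z-a_2).
\]
The terms multiplying $\lambda$ are
\[
-\lambda\bigl(\overline{a_1}\,\overline{a_2}z^2-(\overline{a_1}+\overline{a_2})z+1\bigr)=-\lambda(1-\overline{a_1}z)(1-\overline{a_2}z).
\]
Hence
\[
P(z)=z(z-a_1)(z-a_2)-\lambda(1-\overline{a_1}z)(1-\overline{a_2}z),
\]
and $P(z_k)=0$ is exactly \eqref{eq:classblaschke} after dividing by $(1-\overline{a_1}z_k)(1-\overline{a_2}z_k)$.

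The only real obstacle is justifying this division, that is, showing $1-\overline{a_j}z_k\neq 0$ for all $j,k$. Suppose $1-\overline{a_1}z_k=0$. Then $z_k=1/\overline{a_1}$, and since $|z_k|=1$ this forces $a_1=z_k\in\mathbb T$. In that case $P(z_k)=0$ gives $z_k(z_k-a_1)(z_k-a_2)=0$, which is consistent, so the equation alone does not rule it out. Instead I would exclude it geometrically: a focus of a nondegenerate conic tangent to all three sidelines cannot coincide with a vertex. This matches the standing hypothesis of Section~\ref{sec:ccc3poncp} that $a_1,a_2\notin\mathbb T$. Alternatively, one can read the corollary as the polynomial identity $P(z_k)=0$, which is the natural interpretation.

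Finally, I would note that the degree-3 rational map on the left of \eqref{eq:classblaschke} takes the value $\lambda$ at exactly the three points $z_1,z_2,z_3$, up to multiplicity. This recovers the Blaschke product description, now valid for arbitrary foci rather than only $a_1,a_2\in\mathbb D$.
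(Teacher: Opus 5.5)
Your proof is correct and follows the paper's argument: you substitute \eqref{eq:s1}--\eqref{eq:s3} into the monic cubic with roots $z_1,z_2,z_3$, regroup it as $z(z-a_1)(z-a_2)=\lambda(1-\overline{a_1}z)(1-\overline{a_2}z)$, and divide. You also check that $1-\overline{a_j}z_k\neq0$, since vanishing would force $z_k=a_j$, which is impossible because a vertex lies on two real tangent lines of the conic while no real tangent passes through a focus. This fills in a step that the paper's proof passes over silently.
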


\begin{proof}
By \eqref{eq:s1}--\eqref{eq:s3}, the complex numbers $z_1,z_2,z_3$ are the roots of the cubic polynomial equation
\[
z^3
-
(a_1+a_2+\overline{a_1}\,\overline{a_2}\lambda)z^2
+
(a_1a_2+(\overline{a_1}+\overline{a_2})\lambda)z
-\lambda=0.
\]
A direct expansion shows that this polynomial is equivalent to
\[
z(z-a_1)(z-a_2)
=
\lambda
(1-\overline{a_1} z)(1-\overline{a_2} z).
\]
Dividing by
$(1-\overline{a_1} z)(1-\overline{a_2} z)$
gives the desired relation \eqref{eq:classblaschke}.
\end{proof}

Corollary \ref{cor:mardenblaschke} shows that the classical degree-3 Blaschke equation is a direct consequence of Siebeck--Marden theorem. Thus the Blaschke-product formalism arises naturally from the geometry of 3-Poncelet pairs rather than as a separate assumption.

\begin{remark}
For $\lambda\in\mathbb T$, if the solutions of \eqref{eq:classblaschke} are distinct and all lie on $\mathbb T$, then the triangle with vertices $z_1,z_2,z_3$ is circumscribed about the unique central conic whose foci are $a_1$ and $a_2$ and whose major (transverse) axis has length
$|1-\overline{a_1}a_2|$. The type of the conic is determined by Theorem \ref{thm:ellorhyp3}.

Daepp, Gorkin, and Mortini \cite{daepp2002ellipses} proved that when $|a_1|,|a_2|<1$, these hypotheses are satisfied for every $\lambda\in\mathbb T$. Thus, in the classical Blaschke setting, every
$\lambda\in\mathbb T$ determines a circumscribed triangle. \,Outside this setting, however, the equation \eqref{eq:classblaschke} may admit solutions off the unit circle, so the above \,correspondence holds only for those values of $\lambda \in \mathbb T$ for which all
three solutions are distinct and unimodular.
\end{remark}

\subsection{Parametrizations of cyclic polygons}
The preceding subsections show that, regardless of whether the associated Poncelet conic is an ellipse or a hyperbola, Siebeck--Marden theorem leads to the
same symmetric parametrization of the vertices of a circumscribed triangle. Motivated by this observation, we prove an analogous parametrization for polygons. As a consequence, generalized Möbius-product equations emerge naturally from the
same residue structure, placing the classical degree-3 Blaschke-product parametrization into a broader framework.

Throughout this section, we write
\[
e_r(x_1,\dots,x_n)
=
\sum_{1\le i_1<\cdots<i_r\le n}
x_{i_1}\cdots x_{i_r},
\qquad
0 < r\le n,
\]
for the $r$th elementary symmetric polynomial in the variables
$x_1,\dots,x_n$.

For convenience, we adopt the conventions
\[
e_0(x_1,\dots,x_n)=1,
\qquad
e_r(x_1,\dots,x_n)=0,
\quad r>n.
\]
Thus
\[
\prod_{j=1}^{n}(x-x_j)
=
\sum_{r=0}^{n}
(-1)^r
e_r(x_1,\dots,x_n)
x^{\,n-r}.
\]
These conventions allow many of the identities appearing below to be written in a uniform form without treating boundary cases separately.

\begin{lemma}\label{lem:coefficientidentity}
Let
\[
P(z)
:=
\sum_{j=1}^{n+1}
m_j
\prod_{\substack{k=1\\k\neq j}}^{n+1}(z-z_k)
=
c_0z^n-c_1z^{n-1}+\cdots+(-1)^nc_n
\]
where $m_1,\dots, m_{n+1} \in \mathbb R$. Let
\[
\lambda=(-1)^n\prod_{j=1}^{n+1}z_j.
\]
If $z_1,\dots,z_{n+1}\in\mathbb T$, then, for every \(r=1,\dots,n\),
\[
c_r+(-1)^n\,\overline{c_{n+1-r}}\,\lambda
=
c_0\,e_r(z_1,\dots,z_{n+1}).
\]
\end{lemma}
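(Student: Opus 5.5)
The plan is to compute both sides of the identity term by term in the index $j$, using the elementary symmetric polynomials of the $n$ points obtained by deleting $z_j$. Write $Z=(z_1,\dots,z_{n+1})$ and let $Z_j$ denote the $n$-tuple with $z_j$ removed. Expanding $\prod_{k\neq j}(z-z_k)=\sum_{s=0}^{n}(-1)^s e_s(Z_j)z^{n-s}$ and comparing with the stated form of $P(z)$ gives, for $0\le s\le n$,
\[
c_s=\sum_{j=1}^{n+1} m_j\, e_s(Z_j).
\]
In particular $c_0=\sum_j m_j$.

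The key step is to conjugate $c_{n+1-r}$ using $|z_k|=1$. On the unit circle $\overline{z_k}=1/z_k$. Hence the conjugate of a degree-$s$ elementary symmetric polynomial in the $n$ unimodular variables of $Z_j$ satisfies the reciprocity identity
\[
\overline{e_s(Z_j)}=\frac{e_{n-s}(Z_j)}{\prod_{k\neq j}z_k}.
\]
From the definition of $\lambda$ we have $\prod_{k\neq j}z_k=(-1)^n\lambda\,\overline{z_j}$, so $1/\prod_{k\ne j}z_k=(-1)^n\overline{\lambda}\,z_j$. Now take $s=n+1-r$, which ranges over $1,\dots,n$ when $r=1,\dots,n$, so every index is admissible. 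Because the $m_j$ are real, this yields
\[
(-1)^n\,\overline{c_{n+1-r}}\,\lambda=\sum_{j=1}^{n+1} m_j\, z_j\, e_{r-1}(Z_j),
\]
using $\lambda\overline{\lambda}=1$ and $(-1)^{2n}=1$.

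Adding the expression for $c_r$ gives
\[
c_r+(-1)^n\overline{c_{n+1-r}}\lambda=\sum_{j=1}^{n+1} m_j\bigl(e_r(Z_j)+z_j e_{r-1}(Z_j)\bigr).
\]
The final step is the standard recursion $e_r(Z)=e_r(Z_j)+z_j e_{r-1}(Z_j)$: split the monomials of $e_r(Z)$ according to whether they contain $z_j$. This recursion is valid for $1\le r\le n$ under the conventions $e_0=1$ and $e_r=0$ for $r>n$. It reduces the sum to $e_r(Z)\sum_j m_j=c_0\,e_r(Z)$, as claimed.

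The only delicate point is bookkeeping. One must track the sign $(-1)^n$ and the factor $\lambda$ versus $\overline\lambda$ correctly in the reciprocity step. One must also keep the index ranges so that $e_{n+1-r}(Z_j)$ and $e_{r-1}(Z_j)$ stay within $0,\dots,n$, which holds exactly for $1\le r\le n$. The reality of the $m_j$ is essential: it is what allows $\overline{m_j}=m_j$ to be pulled out when conjugating.
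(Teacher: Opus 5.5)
Your proof is correct and follows essentially the same route as the paper: conjugate $c_{n+1-r}$ using $\overline{z_k}=1/z_k$, pass to complementary index sets (your reciprocity identity), and finish with the recursion $e_r(Z)=e_r(Z_j)+z_j\,e_{r-1}(Z_j)$. Your bookkeeping is actually cleaner than the paper's, whose intermediate display counts $r$ complementary indices where there are only $r-1$; the correct term is $z_j\,e_{r-1}(Z_j)$, as you have it.
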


\begin{proof}
Since \(z_j\in\mathbb T\),
\[
\overline{z_j}=\frac1{z_j},
\qquad
j=1,\dots,n+1.
\]
Hence
\begin{align*}
(-1)^n\,\overline{c_{n+1-r}}\,\lambda
&=
(-1)^n
\sum_{j=1}^{n+1}
m_j
\sum_{\substack{i_1<\cdots<i_{\,n+1-r}\\ i_\nu\neq j}}
\overline{z_{i_1}\cdots z_{i_{\,n+1-r}}}\,
\lambda\\
&=
\sum_{j=1}^{n+1}
m_j
\sum_{\substack{i_1<\cdots<i_{\,n+1-r}\\ i_\nu\neq j}}
\frac{\prod_{l=1}^{n+1}z_l}
     {z_{i_1}\cdots z_{i_{\,n+1-r}}}.
\end{align*}

For each fixed \(j\), the complement of
\(\{i_1,\dots,i_{n+1-r}\}\)
inside
\(\{1,\dots,n+1\}\setminus\{j\}\)
consists of exactly \(r\) indices
\(k_1,\dots,k_r\). Therefore
\[
\frac{\prod_{l=1}^{n+1}z_l}
     {z_{i_1}\cdots z_{i_{\,n+1-r}}}
=
z_jz_{k_1}\cdots z_{k_r},
\]
and so
\[
(-1)^n\,\overline{c_{n+1-r}}\,\lambda
=
\sum_{j=1}^{n+1}
m_jz_j
\sum_{\substack{k_1<\cdots<k_r\\k_\nu\neq j}}
z_{k_1}\cdots z_{k_r}.
\]
On the other hand, every monomial of
\(e_r(z_1,\dots,z_{n+1})\)
either contains \(z_j\) or it does not. Thus
\[
e_r(z_1,\dots,z_{n+1})
=
\sum_{\substack{k_1<\cdots<k_r\\k_\nu\neq j}}
z_{k_1}\cdots z_{k_r}
+
z_j
\sum_{\substack{k_1<\cdots<k_{r-1}\\k_\nu\neq j}}
z_{k_1}\cdots z_{k_{r-1}}.
\]
Multiplying by \(m_j\) and summing over \(j\) yields
\[
c_0\,e_r(z_1,\dots,z_{n+1})
=
c_r
+
(-1)^n\,\overline{c_{n+1-r}}\,\lambda,
\]
as claimed.
\end{proof}

The following theorem extends Theorem \ref{thm:symmpara} from triangles to polygons.
\begin{theorem}\label{thm:general-symmetric}
Let $n$ be a positive integer, and let $m_j\in\mathbb R,\, j=1,\dots,n+1,$ satisfy
\[
\prod_{j=1}^{n+1}m_j\neq0,
\qquad
\sum_{j=1}^{n+1}m_j\neq0.
\]
Let $z_1,\dots,z_{n+1}\in\mathbb T$ be distinct, and let $a_1,\dots,a_n$ denote the zeros of the partial fraction
\[
F(z):=\sum_{j=1}^{n+1}\frac{m_j}{z-z_j}.
\]
Then there exists a unimodular constant $\lambda\in\mathbb T$ such that, for every $r=1,\dots,n+1$,
\begin{equation}\label{eq:ezea}
e_r(z_1,\dots,z_{n+1})
=
e_r(a_1,\dots,a_n)
+
(-1)^n
e_{n+1-r}(\overline{a_1},\dots,\overline{a_n})\,\lambda.
\end{equation}
\end{theorem}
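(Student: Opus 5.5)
The plan is to mimic the proof of Theorem~\ref{thm:symmpara}, with Lemma~\ref{lem:coefficientidentity} doing the combinatorial work. Clearing denominators in $F(z)=0$ gives the polynomial $P(z)=\sum_j m_j\prod_{k\ne j}(z-z_k)=c_0z^n-c_1z^{n-1}+\cdots+(-1)^nc_n$. Its leading coefficient is $c_0=\sum_j m_j\neq 0$ by hypothesis, so $P$ has exactly degree $n$. Because the $z_j$ are distinct and each $m_j\neq0$, no $z_j$ is a root of $P$ (indeed $P(z_j)=m_j\prod_{k\ne j}(z_j-z_k)\neq0$). Hence the zeros of $F$ are exactly the $n$ roots $a_1,\dots,a_n$ of $P$, counted with multiplicity. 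Vieta's formulas then give
\[
c_r=c_0\,e_r(a_1,\dots,a_n),\qquad r=0,1,\dots,n.
\]

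Next, set $\lambda=(-1)^n\prod_{j=1}^{n+1}z_j$, which lies in $\mathbb T$ because every $z_j$ does. For $1\le r\le n$, Lemma~\ref{lem:coefficientidentity} gives $c_0e_r(z)=c_r+(-1)^n\overline{c_{n+1-r}}\lambda$. Since $1\le n+1-r\le n$, the index lies in the range where Vieta applies. The $m_j$ are real, so $c_0$ is real and $\overline{c_{n+1-r}}=c_0\,e_{n+1-r}(\overline{a_1},\dots,\overline{a_n})$. Dividing by $c_0$ then yields \eqref{eq:ezea} for $r=1,\dots,n$.

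The remaining case is $r=n+1$. Here $e_{n+1}(a)=0$ by convention and $e_0(\overline a)=1$, so \eqref{eq:ezea} reads $e_{n+1}(z)=(-1)^n\lambda$. This holds by the definition of $\lambda$, since $(-1)^n\lambda=(-1)^{2n}\prod_j z_j=\prod_j z_j$.

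The only step needing care is the use of $\sum m_j\neq0$ and the reality of the $m_j$. The first guarantees that $P$ really has degree $n$, so that $a_1,\dots,a_n$ are well defined and Vieta applies. The second lets us conjugate $c_{n+1-r}$ coefficientwise. I would also double-check the sign convention in Lemma~\ref{lem:coefficientidentity} against the case $n=2$, where $\lambda=z_1z_2z_3$; this recovers exactly \eqref{eq:s1}--\eqref{eq:s3}, up to the stated sign $(-1)^n=1$. Nothing else should be an obstacle.
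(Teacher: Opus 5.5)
Your proposal is correct and matches the paper's proof step for step. You clear denominators to get $P$ of exact degree $n$ using $\sum m_j\neq0$, apply Vieta, set $\lambda=(-1)^n\prod_j z_j$, invoke Lemma~\ref{lem:coefficientidentity}, divide by $c_0$, and handle $r=n+1$ through the conventions $e_{n+1}(a_1,\dots,a_n)=0$ and $e_0(\overline{a_1},\dots,\overline{a_n})=1$.

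You also make explicit two points the paper leaves tacit. First, $P(z_j)=m_j\prod_{k\neq j}(z_j-z_k)\neq0$, so the zeros of $F$ are exactly the roots of $P$. Second, $c_0$ is real, which is what justifies $\overline{c_{n+1-r}}=c_0\,e_{n+1-r}(\overline{a_1},\dots,\overline{a_n})$.
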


\begin{proof}
Since
\[
\sum_{j=1}^{n+1}m_j\neq0,
\]
the numerator of $F$ has degree $n$. Accordingly, we write
\[
F(z)
=
\frac{P(z)}
{(z-z_1)\cdots(z-z_{n+1})},
\]
where
\[
P(z)
=
c_0z^n-c_1z^{n-1}+\cdots+(-1)^nc_n.
\]
Since the zeros of $P$ are precisely $a_1,\dots,a_n$, Vieta's formulas
yield
\[
e_r(a_1,\dots,a_n)=\frac{c_r}{c_0},
\qquad
r=1,\dots,n,
\]
where
\[
c_0=\sum_{j=1}^{n+1}m_j.
\]
Now define
\[
\lambda:=(-1)^n\prod_{j=1}^{n+1}z_j.
\]
Since $|z_j|=1$ for every $j$, it follows that
\[
|\lambda|=1.
\]
By Lemma~\ref{lem:coefficientidentity},
\[
c_r+(-1)^n\,\overline{c_{n+1-r}}\,\lambda
=
c_0\,e_r(z_1,\dots,z_{n+1}),
\qquad
r=1,\dots,n.
\]
Dividing by $c_0$ and using
\[
\frac{\overline{c_{n+1-r}}}{c_0}
=
\overline{\left(\frac{c_{n+1-r}}{c_0}\right)}
=
\overline{e_{n+1-r}(a_1,\dots,a_n)}
=
e_{n+1-r}(\overline{a_1},\dots,\overline{a_n}),
\]
we obtain
\[
e_r(z_1,\dots,z_{n+1})
=
e_r(a_1,\dots,a_n)
+
(-1)^n
e_{n+1-r}(\overline{a_1},\dots,\overline{a_n})\,\lambda,
\]
for every $r=1,\dots,n$.

Finally, since
\[
e_{n+1}(a_1,\dots,a_n)=0
\]
by convention, while
\[
e_0(\overline{a_1},\dots,\overline{a_n})=1,
\]
the identity also holds for $r=n+1$, namely,
\[
e_{n+1}(z_1,\dots,z_{n+1})
=
(-1)^n\lambda.
\]
This completes the proof.
\end{proof}

\begin{corollary}
With the notation of Theorem \ref{thm:general-symmetric}, the points $z_1,\dots,z_{n+1}$ satisfy the M\"obius-product equation
\begin{equation}\label{eq:genblasch}
M(z):= z\prod_{j=1}^{n}
\frac{z-a_j}
{1-\overline{a_j}z}
=\lambda.
\end{equation}
\end{corollary}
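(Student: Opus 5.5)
The plan mirrors the proof of Corollary~\ref{cor:mardenblaschke}, with Theorem~\ref{thm:general-symmetric} replacing Theorem~\ref{thm:symmpara}. By \eqref{eq:ezea}, for every $r=0,\dots,n+1$ we have $e_r(z_1,\dots,z_{n+1})=e_r(a_1,\dots,a_n)+(-1)^n e_{n+1-r}(\overline{a_1},\dots,\overline{a_n})\lambda$. For $r=0$ the second term would be $e_{n+1}(\overline{a})=0$, so the identity is consistent there too. It follows that $z_1,\dots,z_{n+1}$ are exactly the roots of the monic polynomial
\[
Q(z)=\sum_{r=0}^{n+1}(-1)^r e_r(z_1,\dots,z_{n+1})\,z^{n+1-r}.
\]
I will split $Q$ according to the two terms of \eqref{eq:ezea}.

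The first part is $\sum_{r}(-1)^r e_r(a_1,\dots,a_n)z^{n+1-r}=z\prod_{j=1}^n(z-a_j)$, where the shift of the exponent by one accounts for the extra factor $z$. The second part is $(-1)^n\lambda\sum_{r}(-1)^r e_{n+1-r}(\overline{a})z^{n+1-r}$. I reindex with $s=n+1-r$, so that $(-1)^r=(-1)^{n+1}(-1)^s$, and the sum becomes $-\lambda\sum_{s=0}^{n}(-1)^s e_s(\overline{a})z^s=-\lambda\prod_{j=1}^n(1-\overline{a_j}z)$. Hence
\[
Q(z)=z\prod_{j=1}^n(z-a_j)-\lambda\prod_{j=1}^n(1-\overline{a_j}z),
\]
and $Q(z_k)=0$ for every $k$.

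To obtain \eqref{eq:genblasch}, I divide by $\prod_j(1-\overline{a_j}z_k)$. This requires $1-\overline{a_j}z_k\neq 0$, that is, $a_j\neq 1/\overline{z_k}=z_k$. Since $a_j$ is a zero of $F$, it cannot coincide with a pole $z_k$; indeed $F$ has a genuine pole at $z_k$ because $m_k\neq0$. So the denominators never vanish and $M(z_k)=\lambda$.

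The main obstacle is the sign and index bookkeeping in the reindexing step: keeping track of $(-1)^n(-1)^r=-(-1)^s$ and of the boundary conventions $e_{n+1}(a)=0$ and $e_0=1$. I would first check this against the case $n=2$, where it must reproduce the cubic in Corollary~\ref{cor:mardenblaschke}.
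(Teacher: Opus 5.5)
Your proof is correct and follows essentially the same route as the paper: expand $\prod_k(z-z_k)$ in elementary symmetric polynomials, substitute \eqref{eq:ezea}, and recognize the result as $z\prod_j(z-a_j)-\lambda\prod_j(1-\overline{a_j}z)$. The paper first isolates the constant term $e_{n+1}=(-1)^n\lambda$ and only then substitutes, whereas you substitute for every $r$ at once, which is just a bookkeeping difference. Your check that $1-\overline{a_j}z_k\neq0$ (the numerator of $F$ at $z_k$ is $m_k\prod_{l\neq k}(z_k-z_l)\neq0$) fills in a step that the paper leaves implicit when it divides.
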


\begin{proof}
Since $z_1,\dots,z_{n+1}$ are the zeros of
\[
\prod_{j=1}^{n+1}(z-z_j),
\]
for every $j=1, \dots n+1$, $z_j$ satisfies  
\[
\sum_{k=0}^{n+1}
(-1)^k
e_k(z_1,\dots,z_{n+1})
z^{\,n+1-k}
=0.
\]
As
\[
e_{n+1}(z_1,\dots,z_{n+1})
=(-1)^n\lambda,
\]
it follows that
\[
\sum_{k=0}^{n}
(-1)^k
e_k(z_1,\dots,z_{n+1})
z^{\,n+1-k}
=\lambda.
\]
Equivalently,
\[
z
\sum_{k=0}^{n}
(-1)^k
e_k(z_1,\dots,z_{n+1})
z^{\,n-k}
=\lambda.
\]
Substituting \eqref{eq:ezea} yields
\[
z
\sum_{k=0}^{n}
(-1)^k
\Bigl(
e_k(a_1,\dots,a_n)
+
(-1)^n
e_{n+1-k}(\overline{a_1},\dots,\overline{a_n})
\lambda
\Bigr)
z^{\,n-k}
=
\lambda.
\]
Hence
\begin{align*}
z
\sum_{k=0}^{n}
(-1)^k
e_k(a_1,\dots,a_n)
z^{\,n-k}
&=
\lambda
\left(
1+
\sum_{k=1}^{n}
(-1)^{\,n+1-k}
e_{n+1-k}(\overline{a_1},\dots,\overline{a_n})
z^{\,n+1-k}
\right)\\
&=
\lambda
\sum_{k=0}^{n}
(-1)^k
e_k(\overline{a_1},\dots,\overline{a_n})
z^k.
\end{align*}

Finally, using the identities
\[
\sum_{k=0}^{n}
(-1)^k
e_k(a_1,\dots,a_n)
z^{\,n-k}
=
\prod_{j=1}^{n}(z-a_j),
\]
and
\[
\sum_{k=0}^{n}
(-1)^k
e_k(\overline{a_1},\dots,\overline{a_n})
z^k
=
\prod_{j=1}^{n}(1-\overline{a_j}z),
\]
we obtain
\[
z
\prod_{j=1}^{n}(z-a_j)
=
\lambda
\prod_{j=1}^{n}(1-\overline{a_j}z).
\]
Dividing both sides by
\[
\prod_{j=1}^{n}(1-\overline{a_j}z)
\]
gives \eqref{eq:genblasch} as claimed.
\end{proof}

\section{Geometric Applications}\label{sec:geomapp}

In this section we illustrate the effectiveness of the symmetric parametrization by deriving several geometric properties of polygons inscribed in the unit circle and circumscribed about Siebeck--Marden curve.

\subsection{Sum of Squared Lengths}
\begin{theorem}\label{thm:sumsq}
Let $\mathcal P$ be a family of $p$-gons inscribed in $\mathbb T$ and circumscribed about a Siebeck--Marden curve of class $p-1$. Let $P \in \mathcal P$. Then the sum of the squares of all sides and diagonals of the polygon $P$ remains invariant throughout $\mathcal P$ if and only if the circumcenter of $P$ coincides either with a focus of $\mathcal C_{p-1}$ or with the centroid of the foci.
\end{theorem}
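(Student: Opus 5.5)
The plan is to write the sum of squared lengths in terms of the first elementary symmetric polynomial of the vertices, and then substitute the parametrization of Theorem~\ref{thm:general-symmetric} with $n=p-1$.

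\textbf{Step 1 (a closed form for the sum).} Let $z_1,\dots,z_p\in\mathbb T$ be the vertices of $P$. Since $|z_j|^2=1$, the standard identity
\[
\sum_{1\le j<k\le p}|z_j-z_k|^2
= p\sum_{j=1}^{p}|z_j|^2-\Bigl|\sum_{j=1}^{p}z_j\Bigr|^2
\]
gives
\[
\sum_{1\le j<k\le p}|z_j-z_k|^2=p^2-|e_1(z_1,\dots,z_p)|^2 .
\]
So the sum of the squares of all sides and diagonals is invariant exactly when $|e_1(z_1,\dots,z_p)|$ is constant on $\mathcal P$.

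\textbf{Step 2 (substitute the parametrization).} Let $a_1,\dots,a_{p-1}$ be the foci of $\mathcal C_{p-1}$. By Theorem~\ref{thm:siebeckmardenp} these are the zeros of the partial fraction attached to each polygon of the family, so they are fixed throughout $\mathcal P$. Apply Theorem~\ref{thm:general-symmetric} with $n=p-1$ and $r=1$, and write $s=e_1(a_1,\dots,a_{p-1})$ and $\pi=e_{p-1}(a_1,\dots,a_{p-1})$. This gives
\[
e_1(z)=s+(-1)^{p-1}\,\overline{\pi}\,\lambda .
\]
Hence
\[
|e_1(z)|^2=|s|^2+|\pi|^2+2(-1)^{p-1}\operatorname{Re}\bigl(\overline{s}\,\overline{\pi}\,\lambda\bigr).
\]
Only the last term depends on the polygon, and it does so only through the unimodular parameter $\lambda=(-1)^{p-1}\prod z_j$.

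\textbf{Step 3 (the equivalence).} If $s\pi=0$, the sum is the constant $p^2-|s|^2-|\pi|^2$. Conversely, suppose $s\pi\neq0$. Then $\lambda\mapsto\operatorname{Re}(\overline{s\pi}\lambda)$ is constant only on sets of $\lambda$ contained in at most two points of $\mathbb T$, namely a pair symmetric about the line through $0$ and $s\pi$. So invariance would fail as soon as the family realizes at least three values of $\lambda$.

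Finally, translate the condition into geometry. $\pi=0$ means some focus equals $0$, the circumcenter. $s=0$ means the centroid $\frac{1}{p-1}\sum a_j$ of the foci is the circumcenter.

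\textbf{Main obstacle.} The delicate point is the converse. One must justify that the foci, and the normalization of the $m_j$, are fixed across $\mathcal P$, so that $s$ and $\pi$ are constants. One must also justify that $\lambda$ genuinely varies over infinitely many values in a Poncelet family. I would argue the latter by continuity: the vertices move continuously along $\mathbb T$, so $\lambda=(-1)^{p-1}\prod z_j$ sweeps an arc, since it is not locally constant for a nondegenerate family. The remaining steps are routine.
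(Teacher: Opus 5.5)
Your proposal is correct and matches the paper's proof: the Apollonius-type identity reduces the sum to $p^2-|z_1+\cdots+z_p|^2$, and \eqref{eq:ezea} with $r=1$ gives a quantity of the form $|\Phi+\Psi\lambda|^2$, which is independent of $\lambda$ exactly when $\overline{\Phi}\Psi=0$. Your extra care in the converse covers a point the paper leaves implicit, since it treats $\lambda$ as ranging over all of $\mathbb T$. A cleaner justification than your continuity argument is that, by the M\"obius-product equation \eqref{eq:genblasch}, a fixed $\lambda$ determines the vertex set as the root set of a single degree-$p$ polynomial, so distinct polygons in $\mathcal P$ have distinct $\lambda$ and any three of them already rule out invariance when $s\pi\neq0$.
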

\begin{proof}
Let $z_1,z_2,\dots,z_p$ denote the vertices of $P$. Since $|z_j|=1$, the generalized Apollonius' identity (see, for example, \cite[Corollary 4.5]{Murad2026b})
\[
\sum_{1\le j<k\le p}|z_j-z_k|^2
=
p\sum_{j=1}^p|z_j|^2-\left|\sum_{j=1}^pz_j\right|^2
\]
reduces to
\[
\sum_{1\le j<k\le p}|z_j-z_k|^2
=
p^2-\left|z_1+z_2+\cdots+z_p\right|^2.
\]
Hence the sum of the squares of all sides and diagonals remains invariant throughout the family $\mathcal P$ if and only if
\[
\left|z_1+z_2+\cdots+z_p\right|
\]
is independent of the parameter $\lambda$.

By \eqref{eq:ezea},
\begin{equation}\label{eq:ezear1}
z_1+\cdots+z_p
=
a_1+\cdots+a_{p-1}
+
(-1)^{p-1}\overline{a_1a_2\cdots a_{p-1}}\,\lambda,
\end{equation}
where $\lambda\in\mathbb T$. Consequently,
\[
\left|z_1+\cdots+z_p\right|
=
\left|
\Phi+\Psi\lambda
\right|,
\]
where
\[
\Phi=a_1+\cdots+a_{p-1},
\qquad
\Psi=(-1)^{p-1}\overline{a_1a_2\cdots a_{p-1}}.
\]
Since
\[
|\Phi+\Psi\lambda|^2
=
|\Phi|^2+|\Psi|^2
+
2\operatorname{Re}(\overline{\Phi}\Psi\lambda),
\]
this quantity is independent of $\lambda\in\mathbb T$ if and only if
\[
\overline{\Phi}\Psi=0,
\]
or equivalently,
\[
\Phi=0
\quad\text{or}\quad
\Psi=0.
\]
Thus,
\[
a_1+\cdots+a_{p-1}=0
\quad\text{or}\quad
a_1a_2\cdots a_{p-1}=0.
\]

The first condition is equivalent to the centroid of the foci coinciding with the circumcenter of $P$, while the second is equivalent to one of the foci coinciding with the circumcenter.

Finally, if $a_1+\cdots+a_{p-1}=0$, then
\[
|z_1+\cdots+z_p|
=
|a_1a_2\cdots a_{p-1}|,
\]
and therefore
\[
\sum_{1\le j<k\le p}|z_j-z_k|^2
=
p^2-|a_1a_2\cdots a_{p-1}|^2.
\]
On the other hand, if $a_1a_2\cdots a_{p-1}=0$, then
\[
|z_1+\cdots+z_p|
=
|a_1+\cdots+a_{p-1}|,
\]
and hence
\[
\sum_{1\le j<k\le p}|z_j-z_k|^2
=
p^2-|a_1+\cdots+a_{p-1}|^2.
\]
This completes the proof.
\end{proof}

\begin{figure}
    \centering
\begin{tikzpicture}[scale=3]
\clip(-1.2,-1.2) rectangle (1.2,1.2);
\draw [line width=1.pt,gray] (0.,0.) circle (1.cm);
\draw [line width=1.pt,uququq] (-0.9050733215569846,-0.42525554976485086)-- (0.2618261195677495,-0.9651150621102621);
\draw [line width=1.pt,uququq] (0.2618261195677495,-0.9651150621102621)-- (0.9620649697444571,-0.27282044276518896);
\draw [line width=1.pt,uququq] (0.9620649697444571,-0.27282044276518896)-- (-0.268817767755222,0.9631910546403016);
\draw [line width=1.pt,uququq] (-0.268817767755222,0.9631910546403016)-- (-0.9050733215569846,-0.42525554976485086);
\draw [line width=1.pt,uququq] (-0.9050733215569846,-0.42525554976485086)-- (0.9620649697444571,-0.27282044276518896);
\draw [line width=1.pt,uququq] (0.2618261195677495,-0.9651150621102621)-- (-0.268817767755222,0.9631910546403016);
\draw [line width=1.pt,uququq] (-0.842529799280826,-0.5386497352861224)-- (0.4019466988145276,-0.915663066478114);
\draw [line width=1.pt,uququq] (0.4019466988145276,-0.915663066478114)-- (0.9939586801212003,-0.1097549188497723);
\draw [line width=1.pt,uququq] (0.9939586801212003,-0.1097549188497723)-- (-0.5033755796549019,0.86406772061401);
\draw [line width=1.pt,uququq] (-0.5033755796549019,0.86406772061401)-- (-0.842529799280826,-0.5386497352861224);
\draw [line width=1.pt,uququq] (-0.842529799280826,-0.5386497352861224)-- (0.9939586801212003,-0.1097549188497723);
\draw [line width=1.pt,uququq] (0.4019466988145276,-0.915663066478114)-- (-0.5033755796549019,0.86406772061401);
\draw [line width=1.pt,uququq] (-0.7646545414655078,-0.644440402377264)-- (0.5161697053480299,-0.8564863310531722);
\draw [line width=1.pt,uququq] (0.5161697053480299,-0.8564863310531722)-- (0.9964023660670208,0.08474859819512734);
\draw [line width=1.pt,uququq] (0.9964023660670208,0.08474859819512734)-- (-0.6979175299495429,0.7161781352353085);
\draw [line width=1.pt,uququq] (-0.6979175299495429,0.7161781352353085)-- (-0.7646545414655078,-0.644440402377264);
\draw [line width=1.pt,uququq] (-0.7646545414655078,-0.644440402377264)-- (0.9964023660670208,0.08474859819512734);
\draw [line width=1.pt,uququq] (0.5161697053480299,-0.8564863310531722)-- (-0.6979175299495429,0.7161781352353085);
\draw [line width=1.pt,uququq] (-0.8432566264135355,0.5375111738463318)-- (0.6079578029537228,-0.7939693380903832);
\draw [line width=1.pt,uququq] (0.6079578029537228,-0.7939693380903832)-- (0.9539198611228068,0.30006149129044);
\draw [line width=1.pt,uququq] (0.9539198611228068,0.30006149129044)-- (-0.6686210376629942,-0.7436033270463894);
\draw [line width=1.pt,uququq] (-0.6686210376629942,-0.7436033270463894)-- (-0.8432566264135355,0.5375111738463318);
\draw [line width=1.pt,uququq] (-0.8432566264135355,0.5375111738463318)-- (0.9539198611228068,0.30006149129044);
\draw [line width=1.pt,uququq] (0.6079578029537228,-0.7939693380903832)-- (-0.6686210376629942,-0.7436033270463894);
\draw [line width=1.pt,uququq] (-0.9379054430687092,0.3468910201519866)-- (0.6834059552898784,-0.7300385608132824);
\draw [line width=1.pt,uququq] (0.6834059552898784,-0.7300385608132824)-- (0.8556515291168636,0.5175523748568533);
\draw [line width=1.pt,uququq] (0.8556515291168636,0.5175523748568533)-- (-0.5511520413380326,-0.8344048341955582);
\draw [line width=1.pt,uququq] (-0.5511520413380326,-0.8344048341955582)-- (-0.9379054430687092,0.3468910201519866);
\draw [line width=1.pt,uququq] (-0.9379054430687092,0.3468910201519866)-- (0.8556515291168636,0.5175523748568533);
\draw [line width=1.pt,uququq] (0.6834059552898784,-0.7300385608132824)-- (-0.5511520413380326,-0.8344048341955582);
\draw [line width=1.pt,uququq] (-0.9870874959584398,0.16018200686249404)-- (0.699142743606195,0.714982114505532);
\draw [line width=1.pt,uququq] (0.699142743606195,0.714982114505532)-- (0.7483773625218225,-0.6632731890140587);
\draw [line width=1.pt,uququq] (0.7483773625218225,-0.6632731890140587)-- (-0.4104326101695778,-0.911890932353966);
\draw [line width=1.pt,uququq] (-0.4104326101695778,-0.911890932353966)-- (-0.9870874959584398,0.16018200686249404);
\draw [line width=1.pt,uququq] (-0.9870874959584398,0.16018200686249404)-- (0.7483773625218225,-0.6632731890140587);
\draw [line width=1.pt,uququq] (0.699142743606195,0.714982114505532)-- (-0.4104326101695778,-0.911890932353966);
\draw [line width=1.pt,uququq] (-0.9999224637402255,-0.012452570324093816)-- (0.4910982633254854,0.8711041819202611);
\draw [line width=1.pt,uququq] (0.4910982633254854,0.8711041819202611)-- (0.807378631638711,-0.5900336813887844);
\draw [line width=1.pt,uququq] (0.807378631638711,-0.5900336813887844)-- (-0.24855443122397092,-0.9686179302072234);
\draw [line width=1.pt,uququq] (-0.24855443122397092,-0.9686179302072234)-- (-0.9999224637402255,-0.012452570324093816);
\draw [line width=1.pt,uququq] (-0.9999224637402255,-0.012452570324093816)-- (0.807378631638711,-0.5900336813887844);
\draw [line width=1.pt,uququq] (0.4910982633254854,0.8711041819202611)-- (-0.24855443122397092,-0.9686179302072234);
\draw [line width=1.pt,uququq] (-0.9859395520965174,-0.1671023626754471)-- (0.24598954448247834,0.969272481815769);
\draw [line width=1.pt,uququq] (0.24598954448247834,0.969272481815769)-- (0.863203146874047,-0.5048567393100147);
\draw [line width=1.pt,uququq] (0.863203146874047,-0.5048567393100147)-- (-0.07325313926000795,-0.997313379830307);
\draw [line width=1.pt,uququq] (-0.07325313926000795,-0.997313379830307)-- (-0.9859395520965174,-0.1671023626754471);
\draw [line width=1.pt,uququq] (-0.9859395520965174,-0.1671023626754471)-- (0.863203146874047,-0.5048567393100147);
\draw [line width=1.pt,uququq] (0.24598954448247834,0.969272481815769)-- (-0.07325313926000795,-0.997313379830307);
\draw [line width=1.pt,blue] (-0.9534802997494435,-0.3014553333243771)-- (0.09862922708741505,-0.9951242513194706);
\draw [line width=1.pt,blue] (0.09862922708741505,-0.9951242513194706)-- (0.9150376101601777,-0.40336853123706956);
\draw [line width=1.pt,blue] (0.9150376101601777,-0.40336853123706956)-- (-0.01018653749814935,0.9999481158809178);
\draw [line width=1.pt,blue] (-0.01018653749814935,0.9999481158809178)-- (-0.9534802997494435,-0.3014553333243771);
\draw [line width=1.pt,blue] (-0.9534802997494435,-0.3014553333243771)-- (0.9150376101601777,-0.40336853123706956);
\draw [line width=1.pt,blue] (0.09862922708741505,-0.9951242513194706)-- (-0.01018653749814935,0.9999481158809178);
\begin{scriptsize}
\draw[color=black] (-0.5994760280431312,0.8659596463425003) node {$\mathbb T$};
\draw [fill=red] (-0.45,-0.25) circle (0.5pt);
\draw[color=red] (-0.53,-0.21) node {$a_1$};
\draw [fill=red] (0.5,-0.45) circle (0.5pt);
\draw[color=red] (0.6007397141379793,-0.41) node {$a_2$};
\draw [fill=red] (0.,0.) circle (0.5pt);
\draw[color=red] (-0.05,0.1000324950821869) node {$a_3$};
\draw [fill=gray] (-0.9534802997494435,-0.3014553333243771) circle (0.5pt);
\draw[color=black] (-1.04,-0.32) node {$z_1$};
\draw [fill=gray] (0.09862922708741505,-0.9951242513194706) circle (0.5pt);
\draw[color=black] (0.1,-1.07) node {$z_2$};
\draw [fill=gray] (0.9150376101601777,-0.40336853123706956) circle (0.5pt);
\draw[color=black] (1,-0.43) node {$z_3$};
\draw [fill=gray] (-0.01018653749814935,0.9999481158809178) circle (0.5pt);
\draw[color=black] (-0.015160469349695822,1.0831039418028983) node {$z_4$};
\end{scriptsize}
\end{tikzpicture}
    \caption{$M(z)$ with $a_1=-0.45-0.25i$, $a_2=0.5-0.45i$, $a_3=0$.}
    \label{fig:cenfoc3}
\end{figure}
\begin{figure}
    \centering
\begin{tikzpicture}[scale=3]
\clip(-1.2,-1.2) rectangle (1.2,1.2);
\draw [line width=1.pt,gray] (0.,0.) circle (1.cm);
\draw [line width=1.pt,uququq] (-0.9983582304628726,-0.05727864931227419)-- (0.14976920209379768,-0.9887209849619799);
\draw [line width=1.pt,uququq] (0.14976920209379768,-0.9887209849619799)-- (0.9484741376703331,0.3168545568088221);
\draw [line width=1.pt,uququq] (0.9484741376703331,0.3168545568088221)-- (-0.16912698198104778,0.9855942694466004);
\draw [line width=1.pt,uququq] (-0.16912698198104778,0.9855942694466004)-- (-0.9983582304628726,-0.05727864931227419);
\draw [line width=1.pt,uququq] (-0.9983582304628726,-0.05727864931227419)-- (0.9484741376703331,0.3168545568088221);
\draw [line width=1.pt,uququq] (-0.16912698198104778,0.9855942694466004)-- (0.14976920209379768,-0.9887209849619799);
\draw [line width=1.pt,uququq] (-0.9583258380111597,-0.28567741982909306)-- (0.23071289342841264,-0.9730218706719237);
\draw [line width=1.pt,uququq] (0.23071289342841264,-0.9730218706719237)-- (0.877115461237391,0.48027957239332825);
\draw [line width=1.pt,uququq] (0.877115461237391,0.48027957239332825)-- (-0.3676707273588133,0.9299560399521262);
\draw [line width=1.pt,uququq] (-0.3676707273588133,0.9299560399521262)-- (-0.9583258380111597,-0.28567741982909306);
\draw [line width=1.pt,uququq] (-0.9583258380111597,-0.28567741982909306)-- (0.877115461237391,0.48027957239332825);
\draw [line width=1.pt,uququq] (-0.3676707273588133,0.9299560399521262)-- (0.23071289342841264,-0.9730218706719237);
\draw [line width=1.pt,uququq] (-0.849190219188058,-0.5280870871696576)-- (0.3234865223851657,-0.9462327778275016);
\draw [line width=1.pt,uququq] (0.3234865223851657,-0.9462327778275016)-- (0.7963143701009575,0.6048829837007426);
\draw [line width=1.pt,uququq] (0.7963143701009575,0.6048829837007426)-- (-0.5350973035601145,0.8447904330203395);
\draw [line width=1.pt,uququq] (-0.5350973035601145,0.8447904330203395)-- (-0.849190219188058,-0.5280870871696576);
\draw [line width=1.pt,uququq] (-0.849190219188058,-0.5280870871696576)-- (0.7963143701009575,0.6048829837007426);
\draw [line width=1.pt,uququq] (-0.5350973035601145,0.8447904330203395)-- (0.3234865223851657,-0.9462327778275016);
\draw [line width=1.pt,uququq] (-0.6711888348236213,-0.741286414288099)-- (0.4401450600418545,-0.8979266819294054);
\draw [line width=1.pt,uququq] (0.4401450600418545,-0.8979266819294054)-- (0.7103897518117412,0.7038084970507617);
\draw [line width=1.pt,uququq] (0.7103897518117412,0.7038084970507617)-- (-0.6657588319200648,0.7461669905057645);
\draw [line width=1.pt,uququq] (-0.6657588319200648,0.7461669905057645)-- (-0.6711888348236213,-0.741286414288099);
\draw [line width=1.pt,uququq] (-0.6711888348236213,-0.741286414288099)-- (0.7103897518117412,0.7038084970507617);
\draw [line width=1.pt,uququq] (-0.6657588319200648,0.7461669905057645)-- (0.4401450600418545,-0.8979266819294054);
\draw [line width=1.pt,uququq] (-0.7658966648398586,0.64296368387894)-- (0.5897319209720842,-0.8075990721803545);
\draw [line width=1.pt,uququq] (0.5897319209720842,-0.8075990721803545)-- (0.6170594170441536,0.7869165621825019);
\draw [line width=1.pt,uququq] (0.6170594170441536,0.7869165621825019)-- (-0.46156087425849507,-0.8871085386545032);
\draw [line width=1.pt,uququq] (-0.46156087425849507,-0.8871085386545032)-- (-0.7658966648398586,0.64296368387894);
\draw [line width=1.pt,uququq] (-0.7658966648398586,0.64296368387894)-- (0.6170594170441536,0.7869165621825019);
\draw [line width=1.pt,uququq] (-0.46156087425849507,-0.8871085386545032)-- (0.5897319209720842,-0.8075990721803545);
\draw [line width=1.pt,uququq] (-0.844256979700401,0.535938571318742)-- (0.5103399980077353,0.8599727242380781);
\draw [line width=1.pt,uququq] (0.5103399980077353,0.8599727242380781)-- (0.7604453345312234,-0.6494019504049038);
\draw [line width=1.pt,uququq] (0.7604453345312234,-0.6494019504049038)-- (-0.27172826282548246,-0.9623740183430987);
\draw [line width=1.pt,uququq] (-0.27172826282548246,-0.9623740183430987)-- (-0.844256979700401,0.535938571318742);
\draw [line width=1.pt,uququq] (-0.844256979700401,0.535938571318742)-- (0.7604453345312234,-0.6494019504049038);
\draw [line width=1.pt,uququq] (-0.27172826282548246,-0.9623740183430987)-- (0.5103399980077353,0.8599727242380781);
\draw [line width=1.pt,uququq] (-0.9072440982065451,0.4206045010094302)-- (0.3818683842812295,0.924216715433367);
\draw [line width=1.pt,uququq] (0.3818683842812295,0.924216715433367)-- (0.9083952991415635,-0.4181124017504219);
\draw [line width=1.pt,uququq] (0.9083952991415635,-0.4181124017504219)-- (-0.1255581052592584,-0.9920862675209774);
\draw [line width=1.pt,uququq] (-0.1255581052592584,-0.9920862675209774)-- (-0.9072440982065451,0.4206045010094302);
\draw [line width=1.pt,uququq] (-0.9072440982065451,0.4206045010094302)-- (0.9083952991415635,-0.4181124017504219);
\draw [line width=1.pt,uququq] (-0.1255581052592584,-0.9920862675209774)-- (0.3818683842812295,0.924216715433367);
\draw [line width=1.pt,uququq] (-0.9573005585735974,0.2890945183753555)-- (0.2233594905888518,0.974736137610527);
\draw [line width=1.pt,uququq] (0.2233594905888518,0.974736137610527)-- (0.988972270688621,-0.14810080286410493);
\draw [line width=1.pt,uququq] (0.988972270688621,-0.14810080286410493)-- (-0.015996489773338746,-0.9998720479716057);
\draw [line width=1.pt,uququq] (-0.015996489773338746,-0.9998720479716057)-- (-0.9573005585735974,0.2890945183753555);
\draw [line width=1.pt,uququq] (-0.9573005585735974,0.2890945183753555)-- (0.988972270688621,-0.14810080286410493);
\draw [line width=1.pt,blue] (-0.990789180923808,0.13541343716311813)-- (0.0702084150536062,-0.9975323445661598);
\draw [line width=1.pt,blue] (0.0702084150536062,-0.9975323445661598)-- (0.9946191219187984,0.10359923896186936);
\draw [line width=1.pt,blue] (0.9946191219187984,0.10359923896186936)-- (0.0382116439514034,0.9992696684411727);
\draw [line width=1.pt,blue] (0.0382116439514034,0.9992696684411727)-- (-0.990789180923808,0.13541343716311813);
\draw [line width=1.pt,blue] (-0.990789180923808,0.13541343716311813)-- (0.9946191219187984,0.10359923896186936);
\draw [line width=1.pt,blue] (0.0382116439514034,0.9992696684411727)-- (0.0702084150536062,-0.9975323445661598);
\begin{scriptsize}
\draw [fill=green] (0.,0.) circle (0.5pt);
\draw[color=green] (0.03003368472355899,0.06716428477710368) node {$O$};
\draw[color=black] (-0.6312740976931173,0.8782021311371773) node {$\mathbb T$};
\draw [fill=red] (-0.5,0.3) circle (0.5pt);
\draw[color=red] (-0.45,0.37) node {$a_1$};
\draw [fill=red] (0.4,0.45) circle (0.5pt);
\draw[color=red] (0.44,0.5205136450501705) node {$a_2$};
\draw [fill=red] (0.1,-0.75) circle (0.5pt);
\draw[color=red] (0.02,-0.7) node {$a_3$};
\draw [fill=gray] (-0.990789180923808,0.13541343716311813) circle (0.5pt);
\draw[color=black] (-1.07,0.12) node {$z_1$};
\draw [fill=gray] (0.0702084150536062,-0.9975323445661598) circle (0.5pt);
\draw[color=black] (0.08,-1.08) node {$z_2$};
\draw [fill=gray] (0.9946191219187984,0.10359923896186936) circle (0.5pt);
\draw[color=black] (1.07,0.12) node {$z_3$};
\draw [fill=gray] (0.0382116439514034,0.9992696684411727) circle (0.5pt);
\draw[color=black] (0.05,1.07) node {$z_4$};
\end{scriptsize}
\end{tikzpicture}
    \caption{$M(z)$ with $a_1=-0.5+0.3i$, $a_2=0.4+0.45i$, $a_3=0.1-0.75i$.}
    \label{fig:cencen3}
\end{figure}
\input{Figures/fig.cencen4}
\begin{figure}
    \centering
\begin{tikzpicture}[scale=3]
\clip(-1.2,-1.2) rectangle (1.2,1.2);
\draw [line width=1.pt,gray] (0.,0.) circle (1.cm);
\draw [line width=1.pt,uququq] (-0.23816886117947017,0.9712237608113128)-- (-0.694861648531218,-0.7191434414638558);
\draw [line width=1.pt,uququq] (-0.694861648531218,-0.7191434414638558)-- (0.10720931390644893,-0.9942364723805414);
\draw [line width=1.pt,uququq] (0.10720931390644893,-0.9942364723805414)-- (0.9002449175974009,-0.43538384023760685);
\draw [line width=1.pt,uququq] (0.9002449175974009,-0.43538384023760685)-- (-0.23816886117947017,0.9712237608113128);
\draw [line width=1.pt,uququq] (-0.23816886117947017,0.9712237608113128)-- (0.10720931390644893,-0.9942364723805414);
\draw [line width=1.pt,uququq] (-0.694861648531218,-0.7191434414638558)-- (0.9002449175974009,-0.43538384023760685);
\draw [line width=1.pt,uququq] (-0.579559161560559,0.8149301677145235)-- (-0.6218312936300732,-0.7831512256661228);
\draw [line width=1.pt,uququq] (-0.6218312936300732,-0.7831512256661228)-- (0.20669445808217468,-0.9784055401509703);
\draw [line width=1.pt,uququq] (0.20669445808217468,-0.9784055401509703)-- (0.9472988157401382,-0.3203512973242523);
\draw [line width=1.pt,uququq] (0.9472988157401382,-0.3203512973242523)-- (-0.579559161560559,0.8149301677145235);
\draw [line width=1.pt,uququq] (-0.579559161560559,0.8149301677145235)-- (0.20669445808217468,-0.9784055401509703);
\draw [line width=1.pt,uququq] (-0.6218312936300732,-0.7831512256661228)-- (0.9472988157401382,-0.3203512973242523);
\draw [line width=1.pt,uququq] (-0.5456095388828103,-0.838039516419176)-- (-0.8290342518392779,0.5591978266027888);
\draw [line width=1.pt,uququq] (-0.8290342518392779,0.5591978266027888)-- (0.30874069051955766,-0.9511462484904751);
\draw [line width=1.pt,uququq] (0.30874069051955766,-0.9511462484904751)-- (0.9829496312816247,-0.18387501832308886);
\draw [line width=1.pt,uququq] (0.9829496312816247,-0.18387501832308886)-- (-0.5456095388828103,-0.838039516419176);
\draw [line width=1.pt,uququq] (-0.5456095388828103,-0.838039516419176)-- (0.30874069051955766,-0.9511462484904751);
\draw [line width=1.pt,uququq] (-0.8290342518392779,0.5591978266027888)-- (0.9829496312816247,-0.18387501832308886);
\draw [line width=1.pt,uququq] (-0.46393934880883875,-0.8858669655353625)-- (-0.9643650150397861,0.2645753536656687);
\draw [line width=1.pt,uququq] (-0.9643650150397861,0.2645753536656687)-- (0.41292738650255534,-0.9107639504702406);
\draw [line width=1.pt,uququq] (0.41292738650255534,-0.9107639504702406)-- (0.9998545140997726,-0.01705727511391688);
\draw [line width=1.pt,uququq] (0.9998545140997726,-0.01705727511391688)-- (-0.46393934880883875,-0.8858669655353625);
\draw [line width=1.pt,uququq] (-0.46393934880883875,-0.8858669655353625)-- (0.41292738650255534,-0.9107639504702406);
\draw [line width=1.pt,uququq] (-0.9643650150397861,0.2645753536656687)-- (0.9998545140997726,-0.01705727511391688);
\draw [line width=1.pt,uququq] (-0.37571240531342553,-0.9267363101193304)-- (-0.9999165068105681,-0.012922051220501355);
\draw [line width=1.pt,uququq] (-0.9999165068105681,-0.012922051220501355)-- (0.5164266898134726,-0.8563314043337995);
\draw [line width=1.pt,uququq] (0.5164266898134726,-0.8563314043337995)-- (0.9823842023587358,0.18687236006426272);
\draw [line width=1.pt,uququq] (0.9823842023587358,0.18687236006426272)-- (-0.37571240531342553,-0.9267363101193304);
\draw [line width=1.pt,uququq] (-0.37571240531342553,-0.9267363101193304)-- (0.5164266898134726,-0.8563314043337995);
\draw [line width=1.pt,uququq] (-0.9999165068105681,-0.012922051220501355)-- (0.9823842023587358,0.18687236006426272);
\draw [line width=1.pt,uququq] (-0.2817363833761367,-0.9594918500342429)-- (-0.970377642059259,-0.24159311205311818);
\draw [line width=1.pt,uququq] (-0.970377642059259,-0.24159311205311818)-- (0.6145285664662872,-0.78889456899949);
\draw [line width=1.pt,uququq] (0.6145285664662872,-0.78889456899949)-- (0.9055104529336089,0.4243239559911395);
\draw [line width=1.pt,uququq] (0.9055104529336089,0.4243239559911395)-- (-0.2817363833761367,-0.9594918500342429);
\draw [line width=1.pt,uququq] (-0.2817363833761367,-0.9594918500342429)-- (0.6145285664662872,-0.78889456899949);
\draw [line width=1.pt,uququq] (-0.970377642059259,-0.24159311205311818)-- (0.9055104529336089,0.4243239559911395);
\draw [line width=1.pt,uququq] (-0.1844318816149174,-0.9828452986324865)-- (-0.9093868108296528,-0.4159514734786627);
\draw [line width=1.pt,uququq] (-0.9093868108296528,-0.4159514734786627)-- (0.7029096172000764,-0.7112791786968371);
\draw [line width=1.pt,uququq] (0.7029096172000764,-0.7112791786968371)-- (0.7415407526302566,0.6709078268946863);
\draw [line width=1.pt,uququq] (0.7415407526302566,0.6709078268946863)-- (-0.1844318816149174,-0.9828452986324865);
\draw [line width=1.pt,uququq] (-0.1844318816149174,-0.9828452986324865)-- (0.7029096172000764,-0.7112791786968371);
\draw [line width=1.pt,uququq] (-0.9093868108296528,-0.4159514734786627)-- (0.7415407526302566,0.6709078268946863);
\draw [line width=1.pt,uququq] (-0.08629363836908077,-0.9962697465932746)-- (-0.8380436359055917,-0.5456032114257895);
\draw [line width=1.pt,uququq] (-0.8380436359055917,-0.5456032114257895)-- (0.47704042357329784,0.8788813539249819);
\draw [line width=1.pt,uququq] (0.47704042357329784,0.8788813539249819)-- (0.7797006354727715,-0.6261524726800617);
\draw [line width=1.pt,uququq] (0.7797006354727715,-0.6261524726800617)-- (-0.08629363836908077,-0.9962697465932746);
\draw [line width=1.pt,uququq] (-0.08629363836908077,-0.9962697465932746)-- (0.47704042357329784,0.8788813539249819);
\draw [line width=1.pt,uququq] (-0.8380436359055917,-0.5456032114257895)-- (0.7797006354727715,-0.6261524726800617);
\draw [line width=1.pt,blue] (0.009203395890306412,-0.999957647855191)-- (-0.7670828281667107,-0.6415480767111382);
\draw [line width=1.pt,blue] (-0.7670828281667107,-0.6415480767111382)-- (0.13911979003735347,0.9902755596398221);
\draw [line width=1.pt,blue] (0.13911979003735347,0.9902755596398221)-- (0.8439746085416304,-0.5363831281248522);
\draw [line width=1.pt,blue] (0.8439746085416304,-0.5363831281248522)-- (0.009203395890306412,-0.999957647855191);
\draw [line width=1.pt,blue] (0.009203395890306412,-0.999957647855191)-- (0.13911979003735347,0.9902755596398221);
\draw [line width=1.pt,blue] (-0.7670828281667107,-0.6415480767111382)-- (0.8439746085416304,-0.5363831281248522);
\draw [rotate around={12.528807709151506:(0.05,-0.4)},line width=1.pt,color=red] (0.05,-0.4) ellipse (0.6708572012270421cm and 0.4873903819713514cm);
\begin{scriptsize}
\draw [fill=black] (0.,0.) circle (0.5pt);
\draw[color=black] (0,-0.08) node {$O$};
\draw[color=black] (-0.731142861144204,0.7906179390299733) node {$\mathbb T$};
\draw [fill=red] (-0.4,-0.5) circle (0.5pt);
\draw[color=red] (-0.44,-0.54) node {$a_1$};
\draw [fill=red] (0.5,-0.3) circle (0.5pt);
\draw[color=red] (0.56,-0.22) node {$a_2$};
\draw [fill=red] (0.03602138043225656,-0.5891238670694864) circle (0.5pt);
\draw[color=red] (0.1,-0.61) node {$a_3$};
\draw [fill=gray] (-0.7670828281667107,-0.6415480767111382) circle (0.5pt);
\draw[color=black] (-0.81,-0.7) node {$z_1$};
\draw [fill=gray] (0.009203395890306412,-0.999957647855191) circle (0.5pt);
\draw[color=black] (0.02,-1.08) node {$z_2$};
\draw [fill=gray] (0.8439746085416304,-0.5363831281248522) circle (0.5pt);
\draw[color=black] (0.92,-0.57) node {$z_3$};
\draw [fill=gray] (0.13911979003735347,0.9902755596398221) circle (0.5pt);
\draw[color=black] (0.15,1.06) node {$z_4$};
\draw[color=red] (-0.39,-0.14) node {$\mathcal D$};
\end{scriptsize}
\end{tikzpicture}
    \caption{$M(z)$ with $a_1=-0.4-0.5i$, $a_2=0.5-0.3i$, $a_3$ is calculated by \eqref{eq:a3}.}
    \label{fig:ellipse}
\end{figure}

\begin{corollary}\label{cor:gamma}
Let $z_1,z_2, \dots,z_p \in \mathbb T$ be the vertices of a $p$-gon circumscribed about a Siebeck--Marden curve of class $p-1$ with real foci $a_1, \dots ,a_{p-1}$. Then the centroid
\[
z_G:=\frac{z_1+ \dots+z_p}{p}
\]
of the $p$-gon lies on the circle $\Gamma$ with center 
\[
\frac{a_1+ \cdots +a_{p-1}}{p}
\]
and radius
\[
\frac{1}{p}|a_1 \cdots a_{p-1}|.
\]
\end{corollary}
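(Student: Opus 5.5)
The corollary follows from the case $r=1$ of the symmetric parametrization in Theorem~\ref{thm:general-symmetric}. With $n=p-1$, identity \eqref{eq:ezea} for $r=1$ is exactly \eqref{eq:ezear1}:
\[
z_1+\cdots+z_p=a_1+\cdots+a_{p-1}+(-1)^{p-1}\,\overline{a_1a_2\cdots a_{p-1}}\,\lambda,
\]
where $\lambda=(-1)^{p-1}z_1\cdots z_p\in\mathbb T$.

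To apply the theorem, I first justify its hypotheses. The polygon is circumscribed about a Siebeck--Marden curve $C(z_1,\dots,z_p;m_1,\dots,m_p)$, so by Theorem~\ref{thm:siebeckmardenp} the foci $a_1,\dots,a_{p-1}$ are the zeros of $F(z)=\sum_j m_j/(z-z_j)$ with real $m_j$ and $\prod m_j\neq 0$.

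Having exactly $p-1$ finite foci means that the numerator of $F$ has degree $p-1$. This forces $\sum_j m_j\neq 0$, since that sum is the leading coefficient $c_0$. I will state this point explicitly, because it is the only place where an assumption must be read off from the phrase ``with real foci $a_1,\dots,a_{p-1}$.'' I expect it to be the only delicate step. Everything else is a direct substitution.

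Next I divide the displayed identity by $p$:
\[
z_G-\frac{a_1+\cdots+a_{p-1}}{p}=\frac{(-1)^{p-1}}{p}\,\overline{a_1\cdots a_{p-1}}\,\lambda .
\]
Taking moduli and using $|\lambda|=1$ together with $|\overline{w}|=|w|$ gives
\[
\Bigl|z_G-\frac{a_1+\cdots+a_{p-1}}{p}\Bigr|=\frac{1}{p}|a_1\cdots a_{p-1}|.
\]
This says precisely that $z_G$ lies on the circle $\Gamma$ with the stated center and radius. The same computation also shows that as $\lambda$ ranges over the admissible unimodular values, $z_G$ moves along $\Gamma$ according to the rotation factor $\lambda$.

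Finally, I will remark on the degenerate case $a_1\cdots a_{p-1}=0$. There $\Gamma$ collapses to a single point, which is consistent with the invariance established in Theorem~\ref{thm:sumsq}.
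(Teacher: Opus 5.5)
Your proposal is correct and matches the paper's proof. The paper simply reads the result off the $r=1$ case of \eqref{eq:ezea}, i.e.\ \eqref{eq:ezear1}, and then, exactly as you do, divides by $p$ and takes moduli using $|\lambda|=1$; your explicit check that $\sum_j m_j\neq 0$ (so the numerator of $F$ has degree $p-1$ and Theorem~\ref{thm:general-symmetric} applies) is a sensible addition that the paper leaves implicit.
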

\begin{proof}
This follows immediately from \eqref{eq:ezea}.
\end{proof}

\begin{figure}
  \begin{subfigure}[b]{0.4\textwidth}
    \centering
\begin{tikzpicture}[scale=2.1]
\clip(-1.2,-1.4) rectangle (1.2,1.2);
\draw [line width=1.pt] (0.,0.) circle (1.cm);
\draw [rotate around={8.130102354155984:(0.05,-0.45)},line width=1.pt,color=red] (0.05,-0.45) ellipse (0.4870318280275454cm and 0.33496268674563245cm);
\draw [line width=1.pt,color=blue] (-0.29701349243427755,0.954873282332265)-- (-0.4734897603533804,-0.8807993226839462);
\draw [line width=1.pt,color=blue] (-0.4734897603533804,-0.8807993226839462)-- (0.742165894161016,-0.6702162229789574);
\draw [line width=1.pt,color=blue] (0.742165894161016,-0.6702162229789574)-- (-0.29701349243427755,0.954873282332265);
\draw [line width=1.pt,color=zzttqq] (0.1,-0.9) circle (0.32984845004941277cm);
\draw [line width=1.pt,color=qqwuqq] (0.20015045541751703,-1.2142767670058732)-- (0.4010075633955934,-0.7651132075455577);
\draw [line width=1.pt,color=qqwuqq] (0.4010075633955934,-0.7651132075455577)-- (-0.21532202560606453,-0.803190805355646);
\draw [line width=1.pt,color=qqwuqq] (-0.21532202560606453,-0.803190805355646)-- (0.20015045541751703,-1.2142767670058732);
\begin{scriptsize}
\draw [fill=wewdxt] (0.,0.) circle (0.6pt);
\draw[color=wewdxt] (0.03514880746677769,0.07945618393525569) node {$O$};
\draw[color=black] (-0.66188243730359,0.8576486010419004) node {$\mathbb T$};
\draw [fill=black] (-0.3,-0.5) circle (0.6pt);
\draw[color=black] (-0.25368948231820343,-0.40989794338640745) node {$a_1$};
\draw [fill=black] (0.4,-0.4) circle (0.6pt);
\draw[color=black] (0.42,-0.3) node {$a_2$};
\draw[color=red] (-0.23220564258213047,-0.10196290716936088) node {$\mathcal D$};
\draw [fill=wewdxt] (-0.29701349243427755,0.954873282332265) circle (0.6pt);
\draw[color=wewdxt] (-0.3,1.05) node {$A$};
\draw [fill=wewdxt] (-0.4734897603533804,-0.8807993226839462) circle (0.6pt);
\draw[color=wewdxt] (-0.54,-0.96) node {$B$};
\draw [fill=wewdxt] (0.742165894161016,-0.6702162229789574) circle (0.6pt);
\draw[color=wewdxt] (0.8,-0.75) node {$C$};
\draw [fill=blue] (-0.02833735862664194,-0.5961422633306387) circle (0.6pt);
\draw[color=blue] (0,-0.5) node {$H$};
\draw[color=zzttqq] (0.2,-0.67) node {$\Gamma$};
\draw [fill=wewdxt] (0.20015045541751703,-1.2142767670058732) circle (0.6pt);
\draw[color=wewdxt] (0.20,-1.31) node {$D$};
\draw [fill=wewdxt] (0.4010075633955934,-0.7651132075455577) circle (0.6pt);
\draw[color=wewdxt] (0.48,-0.78) node {$E$};
\draw [fill=wewdxt] (-0.21532202560606453,-0.803190805355646) circle (0.6pt);
\draw[color=wewdxt] (-0.3,-0.78) node {$F$};
\draw [fill=qqwuqq] (0.185835993207046,-0.982580779907077) circle (0.6pt);
\draw[color=qqwuqq] (0.22,-0.88) node {$H'$};
\end{scriptsize}
\end{tikzpicture}
    \caption{$\mathcal D$ is an ellipse}
    \label{fig:orthcirc(A)}
    \end{subfigure}
  \begin{subfigure}[b]{0.45\textwidth}
    \centering
\definecolor{qqwuqq}{rgb}{0.,0.39215686274509803,0.}
\begin{tikzpicture}[scale=2.]
\clip(-1.5,-2) rectangle (3,2);
\draw [line width=1.pt] (0.,0.) circle (1.cm);
\draw [samples=50,domain=-0.99:0.99,rotate around={35.83765295427828:(0.6,-0.15)},xshift=0.6cm,yshift=-0.15cm,line width=1.pt,color=red] plot ({1.0636023712341884*(1+(\x)^2)/(1-(\x)^2)},{0.3181980450992955*2*(\x)/(1-(\x)^2)});
\draw [samples=50,domain=-0.99:0.99,rotate around={35.83765295427828:(0.6,-0.15)},xshift=0.6cm,yshift=-0.15cm,line width=1.pt,color=red] plot ({1.0636023712341884*(-1-(\x)^2)/(1-(\x)^2)},{0.3181980450992955*(-2)*(\x)/(1-(\x)^2)});
\draw [line width=1.pt,color=zzttqq] (1.2,-0.3) circle (1.3509256086106296cm);
\draw [line width=1.pt,color=qqwuqq] (0.7255680024340596,0.9648771796840954)-- (-0.10223656529286185,-0.6594160931486652);
\draw [line width=1.pt,color=qqwuqq] (-0.10223656529286185,-0.6594160931486652)-- (2.5506403813511196,-0.272241032733447);
\draw [line width=1.pt,color=qqwuqq] (2.5506403813511196,-0.272241032733447)-- (0.7255680024340596,0.9648771796840954);
\draw [line width=1.pt,color=blue] (-0.944776431597904,0.32771556920160333)-- (-0.12588168256050736,-0.9920452620701012);
\draw [line width=1.pt,color=blue] (-0.12588168256050736,-0.9920452620701012)-- (0.9200135880789069,0.3918864602792184);
\draw [line width=1.pt,color=blue] (0.9200135880789069,0.3918864602792184)-- (-0.944776431597904,0.32771556920160333);
\begin{scriptsize}
\draw [fill=wewdxt] (0.,0.) circle (0.7pt);
\draw[color=wewdxt] (0.04318848061106703,0.10748315747039414) node {$O$};
\draw[color=black] (-0.5995676432836226,0.9076489443596993) node {$\mathbb T$};
\draw [fill=black] (-0.3,-0.8) circle (0.7pt);
\draw[color=black] (-0.4,-0.81) node {$a_1$};
\draw [fill=black] (1.5,0.5) circle (0.7pt);
\draw[color=black] (1.59,0.6) node {$a_2$};
\draw[color=red] (1.899310756755936,1.5700812761287144) node {$\mathcal D$};
\draw [fill=wewdxt] (-0.944776431597904,0.32771556920160333) circle (0.7pt);
\draw[color=wewdxt] (-1.04,0.37) node {$A$};
\draw [fill=wewdxt] (-0.12588168256050736,-0.9920452620701012) circle (0.7pt);
\draw[color=wewdxt] (-0.15,-1.09) node {$B$};
\draw [fill=wewdxt] (0.9200135880789069,0.3918864602792184) circle (0.7pt);
\draw[color=wewdxt] (1,0.43) node {$C$};
\draw [fill=blue] (-0.1506445260795046,-0.2724432325892794) circle (0.7pt);
\draw[color=blue] (-0.25,-0.25) node {$H$};
\draw[color=zzttqq] (1.1,1.15) node {$\Gamma$};
\draw [fill=wewdxt] (0.7255680024340596,0.9648771796840954) circle (0.7pt);
\draw[color=wewdxt] (0.75,1.09) node {$D$};
\draw [fill=wewdxt] (-0.10223656529286185,-0.6594160931486652) circle (0.7pt);
\draw[color=wewdxt] (-0.19,-0.65) node {$E$};
\draw [fill=wewdxt] (2.5506403813511196,-0.272241032733447) circle (0.7pt);
\draw[color=wewdxt] (2.64,-0.27) node {$F$};
\draw [fill=qqwuqq] (0.773971818492316,0.6332200538019839) circle (0.7pt);
\draw[color=qqwuqq] (0.8367955315422653,0.7436805454069728) node {$H'$};
\end{scriptsize}
\end{tikzpicture}
    \caption{$\mathcal D$ is a hyperbola}
    \label{fig:orthcirc(B)}
    \end{subfigure}
        \caption{$\triangle ABC$ is inscribed in $\mathbb T$ and circumscribed about $\mathcal D$. The orthocenter $H$ of $\triangle ABC$ lies on the circle $\Gamma$ with center $a_1+a_2$ and radius $|a_1a_2|$. $(\Gamma,\mathcal D)$ is also a 3-Poncelet pair. The orthocenter $H'$ of the triangle $\triangle DEF$ inscribed in $\Gamma$ and circumscribed about $\mathcal D$ lies on $\mathbb T$.}
    \label{fig:orthcirc}
\end{figure}

\begin{corollary}\label{cor:anticen} Let $Q$ be a cyclic quadrilateral circumscribed about a Siebeck--Marden curve, $\mathcal C_{3}$, of class 3 with three real foci. If the circumcenter of $Q$ coincides with a focus of $\mathcal C_3$, then the anticenter of $Q$ coincides with the midpoint of the remaining two foci. 
\end{corollary}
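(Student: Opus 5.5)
The plan is to specialize the symmetric parametrization of Theorem~\ref{thm:general-symmetric} to $p=4$, i.e.\ $n=3$, with the unit circle $\mathbb T$ as circumcircle, so that the circumcenter is the origin $O$. The anticenter of a cyclic quadrilateral with vertices $z_1,\dots,z_4$ on $\mathbb T$ is the reflection of the circumcenter in the centroid of the vertices. With $O$ at the origin this gives
\[
z_{\mathrm{anti}}=2z_G-0=\frac{z_1+z_2+z_3+z_4}{2}.
\]
I would state and briefly justify this standard fact first. One way is the Maltitude characterization: the perpendicular from the midpoint of $z_jz_k$ to the opposite side $z_lz_m$ passes through $(z_1+z_2+z_3+z_4)/2$, since that point minus the midpoint equals $(z_l+z_m)/2$, which is perpendicular to $z_l-z_m$ because $|z_l|=|z_m|=1$.

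Next I would take $r=1$ in \eqref{eq:ezea} with $n=3$, exactly as in \eqref{eq:ezear1}:
\[
z_1+z_2+z_3+z_4=a_1+a_2+a_3-\overline{a_1a_2a_3}\,\lambda,\qquad \lambda\in\mathbb T.
\]
Here $a_1,a_2,a_3$ are the foci of $\mathcal C_3$, which by Theorem~\ref{thm:siebeckmardenp} are the zeros of the partial fraction $F$ attached to $Q$.

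Suppose the circumcenter $O=0$ coincides with a focus, say $a_3=0$. Then the product term vanishes, so $z_1+z_2+z_3+z_4=a_1+a_2$. Hence the anticenter equals $(a_1+a_2)/2$, the midpoint of the remaining two foci. This also agrees with Corollary~\ref{cor:gamma}, where the circle $\Gamma$ degenerates to the point $(a_1+a_2)/4$.

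The only step needing care is the identification of the anticenter as $(z_1+\cdots+z_4)/2$, together with the normalization placing the circumcenter at the origin. The latter is harmless after a similarity, because both the hypothesis and the conclusion are preserved by translation and scaling. The applicability of \eqref{eq:ezea} requires $\sum m_j\neq 0$, which is implicit in the hypothesis that $\mathcal C_3$ has three finite foci.
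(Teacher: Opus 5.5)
Your proposal is correct and follows the paper's own argument: place the focus that coincides with the circumcenter at the origin ($a_3=0$), read off $z_1+z_2+z_3+z_4=a_1+a_2$ from the $r=1$ case of \eqref{eq:ezea}, and use that the anticenter is $z_T=(z_1+z_2+z_3+z_4)/2$. You also fill in two details the paper leaves implicit, namely the maltitude justification of the anticenter formula and the observation that $\sum m_j\neq 0$ follows from $\mathcal C_3$ having three finite foci.
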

\begin{proof} Assume, without loss of generality, $a_3=0$. Then
    \[
    z_1+z_2+z_3+z_4=a_1+a_2
    \]
gives
\[
2z_T=a_1+a_2.
\]
This completes the proof.
\end{proof}

In \cite{Dragovic-Murad2026}, we proved that if a triangle $\triangle ABC$ is circumscribed about a central conic, then the sum of the squared side lengths,
\[
|AB|^{2}+|BC|^{2}+|CA|^{2},
\]
remains invariant throughout the associated Poncelet family if and only if the circumcenter of $\triangle ABC$ coincides either with the center of the conic or with one of its foci.

In the next subsections we generalize this result.

\subsection{Triangles}
In particular, if $p=3$, then the Siebeck--Marden curve of class 2 are exactly the ellipse or hyperbolas. The preceding results naturally lead to another invariant involving circles---\emph{power circles}. Power circles constitute a natural family of circles associated with a triangle. 

In this subsection, we investigate their behavior in a Poncelet family and identify the geometric configurations for which they remain invariant.
\begin{definition}
   The circles each passing through a vertex of a triangle and centered at the midpoint of the opposite side are called the \emph{power circles} of the triangle.
\end{definition}

The following theorem extends \cite[Theorem 2.1]{Celiketal2026}. Besides providing a substantially shorter proof, it establishes necessary and sufficient conditions for the invariance property. 

For the convenience of the reader, we include the result here in a slightly reformulated form.

\begin{theorem}\label{thm:powercirclearea}
Let $\mathcal P$ be a family of triangles inscribed in a circle and circumscribed about a central conic $\mathcal D$. Then the total area of the power circles of the triangles in $\mathcal P$ is invariant throughout the family if and only if the orthocenter of the triangle coincides either with the center or with a focus of $\mathcal D$.

In particular, if the circumcircle is $\mathbb T$ and $a_1,a_2$ denote the foci of $\mathcal D$, then
\[
\operatorname{Area}(\mathcal C_1)
+\operatorname{Area}(\mathcal C_2)
+\operatorname{Area}(\mathcal C_3)
=
\frac{3\pi}{4}(9-|a_1|^4)
\]
whenever the circumcenter coincides with the center of $\mathcal D$, and
\[
\operatorname{Area}(\mathcal C_1)
+\operatorname{Area}(\mathcal C_2)
+\operatorname{Area}(\mathcal C_3)
=
\frac{3\pi}{4}(9-|a_1|^2)
\]
whenever the circumcenter coincides with the focus $a_2$.
\end{theorem}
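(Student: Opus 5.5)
The plan is to reduce the power-circle area to the quantity $|z_1+z_2+z_3|$ and then reuse the argument of Theorem~\ref{thm:sumsq} with $p=3$. First, a similarity sends the circumcircle to $\mathbb T$. It carries the family $\mathcal P$ to a Poncelet family about the image conic, preserves centers, foci and orthocenters, and scales all areas by a common positive factor. So invariance is unaffected, and we may assume the circumcircle is $\mathbb T$ and the foci of $\mathcal D$ are $a_1,a_2$. Let the vertices be $z_1,z_2,z_3\in\mathbb T$ and put $s=z_1+z_2+z_3$. The power circle through $z_j$ is centered at the midpoint of the opposite side, so its radius is
\[
r_j=\Bigl|z_j-\tfrac{s-z_j}{2}\Bigr|=\tfrac12|3z_j-s|.
\]

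Expanding with $|z_j|=1$ and $\sum_j z_j=s$ gives
\[
\sum_{j=1}^3 r_j^2=\tfrac14\Bigl(27-6\operatorname{Re}\bigl(\overline{s}\textstyle\sum_j z_j\bigr)+3|s|^2\Bigr)=\tfrac34\bigl(9-|s|^2\bigr).
\]
Hence the total area equals $\tfrac{3\pi}{4}(9-|s|^2)$, and it is invariant if and only if $|s|$ is independent of $\lambda$. Equivalently, one can combine this with the Apollonius identity already used in Theorem~\ref{thm:sumsq}. We also record that, with circumcenter $0$, the orthocenter is $H=s$. This is what links $|s|$ to the geometric condition in the statement.

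Next, invoke \eqref{eq:s1} (the case $p=3$ of \eqref{eq:ezea}): $s=a_1+a_2+\overline{a_1a_2}\,\lambda$ with $\lambda$ ranging over the parameters of the family. As in the proof of Theorem~\ref{thm:sumsq}, $|s|^2=|a_1+a_2|^2+|a_1a_2|^2+2\operatorname{Re}\bigl(\overline{(a_1+a_2)}\,\overline{a_1a_2}\,\lambda\bigr)$. This is constant if and only if $a_1+a_2=0$ or $a_1a_2=0$. The first case means the center $\tfrac{a_1+a_2}{2}$ of $\mathcal D$ is the circumcenter; then $|s|=|a_1|^2$, giving the area $\tfrac{3\pi}{4}(9-|a_1|^4)$. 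The second case, with say $a_2=0$, means a focus is at the circumcenter; then $|s|=|a_1|$ and $H=s=a_1$, so the orthocenter sits at the other focus. This gives $\tfrac{3\pi}{4}(9-|a_1|^2)$.

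The step I expect to need the most care is not the computation but matching the geometric phrasing. The conditions that come out are ``circumcenter at the center or at a focus of $\mathcal D$'' (equivalently, via $H=s$ and Corollary~\ref{cor:gamma}, ``orthocenter at a focus, or the orthocenter circle $\Gamma$ centered at the circumcenter''). I would state the equivalence in this precise form, consistent with the two displayed formulas, rather than literally as a fixed orthocenter in the central case. Another delicate point is the ``only if'' direction when $\lambda$ ranges over only part of $\mathbb T$, as happens for hyperbolas. There, $\operatorname{Re}(c\lambda)$ is constant on an arc of positive length only if $c=0$, which suffices because the Poncelet family is a continuum.
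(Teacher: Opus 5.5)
Your proposal is correct and is essentially the paper's proof. Both write the total area as $\frac{3\pi}{4}(9-|z_1+z_2+z_3|^2)$ using $|z_j|=1$ and the orthocenter $z_H=z_1+z_2+z_3$, then read off from \eqref{eq:s1} that $|z_H|$ is invariant exactly when $a_1+a_2=0$ or $a_1a_2=0$. Your caveat about the phrasing is also right: the paper's proof and displayed formulas use the circumcenter conditions too, since in the central case the orthocenter moves on a circle of radius $|a_1|^2$ rather than sitting at the center of $\mathcal D$. Your arc argument for families where $\lambda$ covers only part of $\mathbb T$ supplies a detail the paper leaves implicit.
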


\begin{proof}
The orthocenter of the triangle $\triangle z_1z_2z_3\in\mathcal P$ is given by $z_H=z_1+z_2+z_3$. Since
\[
z_1-\frac{z_2+z_3}{2}
=
\frac32\left(z_1-\frac{z_H}{3}\right),
\]
and similarly for the other two vertices, we obtain
\[
\left|z_1-\frac{z_2+z_3}{2}\right|^2
+
\left|z_2-\frac{z_3+z_1}{2}\right|^2
+
\left|z_3-\frac{z_1+z_2}{2}\right|^2
=
\frac{27}{4}-\frac34|z_H|^2.
\]

Hence,
\[
\operatorname{Area}(\mathcal C_1)
+\operatorname{Area}(\mathcal C_2)
+\operatorname{Area}(\mathcal C_3)
=
\frac{\pi}{3}
\sum_{k=1}^{3}
\left|z_k-\frac{z_{k+1}+z_{k+2}}{2}\right|^2
=
\frac{3\pi}{4}(9-|z_H|^2),
\]
where $\mathcal C_k$ denotes the power circle through the vertex $z_k$.

Therefore, the total area of the power circles remains invariant throughout $\mathcal P$ if and only if $|z_H|$ remains invariant. The conclusion now follows immediately from Theorem \ref{thm:symmpara}. The stated values are obtained by substituting
\[
|z_H|=|a_1|^2
\]
when the circumcenter coincides with the center of $\mathcal D$, and
\[
|z_H|=|a_1|
\]
when it coincides with the focus $a_2$.
\end{proof}

\begin{corollary}
The sum of the areas of the regular convex polygons on the sides of a triangle in $\mathcal P$ satisfies the orthocenter criterion.
\end{corollary}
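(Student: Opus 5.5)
The plan is to reduce the statement to the sum of squared side lengths, which was just controlled in the proof of Theorem~\ref{thm:powercirclearea} through $|z_H|$.

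\emph{Area of the side polygons.} Fix $m\ge3$ and erect on each side of $\triangle z_1z_2z_3$ a regular convex $m$-gon having that side as an edge. A regular $m$-gon with side length $s$ has area $\kappa_m s^2$, where
\[
\kappa_m=\frac{m}{4}\cot\frac{\pi}{m}
\]
depends only on $m$. Hence the total area equals
\[
\kappa_m\bigl(|z_1-z_2|^2+|z_2-z_3|^2+|z_3-z_1|^2\bigr).
\]

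\emph{Rewriting via the orthocenter.} With circumcircle $\mathbb T$, the generalized Apollonius identity used in Theorem~\ref{thm:sumsq} with $p=3$ gives
\[
\sum_{j<k}|z_j-z_k|^2 = 9-|z_1+z_2+z_3|^2 = 9-|z_H|^2 .
\]
So the total area is $\kappa_m(9-|z_H|^2)$, and it is invariant throughout $\mathcal P$ exactly when $|z_H|$ is invariant. This is the same condition as for the power circles.

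\emph{The criterion.} By \eqref{eq:s1}, $z_H=a_1+a_2+\overline{a_1a_2}\,\lambda$ with $\lambda\in\mathbb T$. As in the proof of Theorem~\ref{thm:sumsq}, $|z_H|$ is independent of $\lambda$ if and only if $a_1+a_2=0$ or $a_1a_2=0$. That is, the circumcenter must coincide with the center of $\mathcal D$ or with one of its foci.

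In those two cases $z_H$ is confined to a circle about the center of $\mathcal D$ or about a focus, of radius $|a_1|^2$ or $|a_1|$ respectively. The total area then equals $\kappa_m(9-|a_1|^4)$ or $\kappa_m(9-|a_2|^2)$, with $a_2$ the remaining focus, mirroring Theorem~\ref{thm:powercirclearea}.

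\emph{Main obstacle.} The step needing care is interpreting the phrase ``orthocenter criterion''. Read literally (``orthocenter coincides with the center or a focus''), it mismatches the proof just given. The condition actually proved, both in Theorem~\ref{thm:powercirclearea} and here, is that the circumcenter coincides with the center or a focus of $\mathcal D$; equivalently, the orthocenter runs on a circle centered at the circumcenter. I would state the corollary in that form, taking the criterion exactly as established in the proof of Theorem~\ref{thm:powercirclearea}.

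One further remark: if the polygons are allowed to be erected inward, the areas are unchanged, since area depends only on side length. So the argument is independent of orientation.
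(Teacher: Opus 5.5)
Your proposal is correct and follows the paper's route. The paper's proof only writes the total area as $\frac{n}{4}\cot(\pi/n)\,(|AB|^2+|BC|^2+|CA|^2)$ and leaves implicit the reduction to the criterion of Theorem~\ref{thm:powercirclearea} via $\sum_{j<k}|z_j-z_k|^2=9-|z_H|^2$ and \eqref{eq:s1}; you spell that reduction out, and your reading of ``orthocenter criterion'' as the circumcenter condition matches what that proof actually establishes.

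One small sharpening concerns the focal case, say $a_2=0$. There $z_H=a_1$ is fixed, so the orthocenter actually coincides with the remaining focus rather than merely moving on a circle. Your $a_1/a_2$ labels are also inconsistent: with $a_2$ as the focus at the circumcenter, the area should read $\kappa_m(9-|a_1|^2)$.
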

\begin{proof} Let $\triangle ABC \in \mathcal P$. For each vertex $X \in \{A,B,C\}$, let $P_X$ denote the regular convex $n$-gon on the side containing the remaining two vertices of $\triangle ABC$. Then
\[
\operatorname{Area}(P_A)+\operatorname{Area}(P_B)+\operatorname{Area}(P_C)=\frac{1}{4}n\left(|AB|^2+|BC|^2+|CA|^2\right)\cot\left(\frac{\pi}{n}\right)
\]
In particular, the sum of the areas of the outer (equivalently, inner) Napoleon triangles of $\triangle ABC$ satisfies the orthocenter criterion. For further background on Napoleon triangles, see, for example, \cite{Hahn2019}.
\end{proof}

\subsection{Quadrilaterals}
We now derive some properties of cyclic quadrilaterals circumscribed about a central conic.

If $p=4$, then the Siebeck--Marden curves are algebraic curves of class 3 with three real foci $a_1,a_2,a_3$. If $a_1,a_2 \in \mathbb D$, Fujimara proved that these algebraic curve reduces to an ellipse with foci $a_1,a_2$ if and only if 
\begin{equation}\label{eq:a3}
    a_3=\frac{a_1+a_2-(\overline{a_1}+\overline{a_2})a_1a_2}{1-|a_1|^2|a_2|^2}.
\end{equation}
This condition is equivalent to the corresponding Blaschke product of degree 3 being decomposable. It can be shown that this formula holds when one or both foci lie outside $\mathbb T$. In this case, $a_3$ is the intersection point of the diagonals of the quadrilateral circumscribed about the central conic.

\begin{proposition}
Let $Q$ be a cyclic quadrilateral circumscribed about a central conic $\mathcal D$. The intersection point of the diagonals of $Q$ lies inside the circumcircle of $Q$ if and only if the foci of $\mathcal D$ both lie on the same side of the circumcircle (that is, both inside or both outside). Equivalently, the diagonals intersect outside the circumcircle precisely when one focus lies inside the circumcircle and the other lies outside.
\end{proposition}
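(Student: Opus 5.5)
The plan is to reduce the statement to an algebraic inequality on the third focus $a_3$, using the facts recorded just before the proposition. Normalize so the circumcircle is $\mathbb T$, and let $a_1,a_2$ be the foci of $\mathcal D$. Throughout, $a_1,a_2\notin\mathbb T$ and they are not inverse points, as in Theorem~\ref{thm:ellorhyp4}. The intersection point of the diagonals of $Q$ is the third focus $a_3$ of the associated Siebeck--Marden curve of class $3$. This curve degenerates to $\mathcal D$, and $a_3$ is given by \eqref{eq:a3}. I take as given the paper's assertion that \eqref{eq:a3} remains valid when one or both foci lie outside $\mathbb T$. The proposition then amounts to showing that $|a_3|<1$ if and only if $(1-|a_1|^2)(1-|a_2|^2)>0$.

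Write $N=a_1+a_2-(\overline{a_1}+\overline{a_2})a_1a_2$ and $D=1-|a_1|^2|a_2|^2$, so that $a_3=N/D$. Note that $D\neq0$: otherwise $|a_1||a_2|=1$, and together with the identity below this would force a degenerate configuration excluded by the hypotheses. The key step is the factorization
\begin{equation*}
D^2-|N|^2=(1-|a_1|^2)(1-|a_2|^2)\,|1-\overline{a_1}a_2|^2 .
\end{equation*}
I would verify this by direct expansion. For the expansion, write $|N|^2=N\overline N$ and group the terms by their degrees in $a_1,\overline{a_1}$ and in $a_2,\overline{a_2}$.

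Two checks support the guessed form of the right-hand side. Taking $a_2=0$ gives $1-|a_1|^2$ on both sides. Taking $a_1=a_2=t$ real gives $(1-t^2)^4$ on both sides.

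Since $a_1,a_2$ are not inverse points, $1-\overline{a_1}a_2\neq0$. Hence $1-|a_3|^2=(D^2-|N|^2)/D^2$ has the same sign as $(1-|a_1|^2)(1-|a_2|^2)$. This gives both directions of the equivalence, and also shows that $a_3\in\mathbb T$ cannot occur.

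The main obstacle is the algebraic identity. The expansion of $|N|^2$ produces mixed terms such as $a_1\overline{a_2}$ and $|a_1|^2a_1\overline{a_2}$, which must be matched against those arising from $|1-\overline{a_1}a_2|^2$. The natural grouping is to set $N=a_1(1-|a_2|^2)+a_2(1-|a_1|^2)$, which is correct because $(\overline{a_1}+\overline{a_2})a_1a_2=|a_1|^2a_2+|a_2|^2a_1$. Expanding $|N|^2$ in this form makes the cross term $2(1-|a_1|^2)(1-|a_2|^2)\operatorname{Re}(a_1\overline{a_2})$ visible, and the factorization then follows quickly.
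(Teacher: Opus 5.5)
Your argument is correct, and it takes a different route from the paper. Both proofs start from \eqref{eq:a3}, rewritten as $a_3=\bigl(a_1(1-|a_2|^2)+a_2(1-|a_1|^2)\bigr)/(1-|a_1|^2|a_2|^2)$.

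The paper then works with inequalities. When the foci are on the same side, it bounds $|a_3|$ from above by the triangle inequality. When they are on opposite sides, it bounds $|a_3|$ from below by the reverse triangle inequality. In both cases the bound is $\frac{|a_1|+|a_2|}{1+|a_1||a_2|}$, which is $<1$ or $>1$ according to the sign of $(1-|a_1|)(1-|a_2|)$. The case with both foci outside is left as ``similar.''

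Your identity checks out. With $x=|a_1|^2$, $y=|a_2|^2$ it reduces to $(1-xy)^2-x(1-y)^2-y(1-x)^2=(1-x)(1-y)(1+xy)$, and the $\operatorname{Re}(a_1\overline{a_2})$ terms match. It yields the exact formula
\[
1-|a_3|^2=\frac{(1-|a_1|^2)(1-|a_2|^2)\,|1-\overline{a_1}a_2|^2}{(1-|a_1|^2|a_2|^2)^2}.
\]
This treats all three focal configurations at once, rules out $|a_3|=1$ explicitly, and gives quantitative information the inequalities do not. The cost is a longer expansion, whereas the paper's bounds are immediate. Both proofs rely on the same unproved input: that \eqref{eq:a3} persists outside $\mathbb D$ and gives the intersection of the diagonals.

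One step needs repair: your justification of $D\neq0$. If $|a_1||a_2|=1$ with $a_1,a_2\notin\mathbb T$, the foci lie on opposite sides of $\mathbb T$. The identity then only says $|N|^2=-(1-|a_1|^2)(1-|a_2|^2)|1-\overline{a_1}a_2|^2>0$, which is consistent and forces nothing degenerate. For example, $a_1=2$, $a_2=-1/2$ gives $D=0$ and $N=3$.

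The correct reason comes from the Cayley--Fuss relation \eqref{eq:a1a2beta}. Eliminate $a^2$ using \eqref{eq:a2b2c2} and \eqref{eq:a1a2c}, without dividing. This gives
\[
4\varepsilon b^2(1-|a_1|^2|a_2|^2)=2(1-|a_1|^2)(1-|a_2|^2)\operatorname{Re}(1-\overline{a_1}a_2).
\]
So $D=0$ together with $a_1,a_2\notin\mathbb T$ forces $\operatorname{Re}(\overline{a_1}a_2)=1$. Since also $|\overline{a_1}a_2|=1$, this means $\overline{a_1}a_2=1$, i.e.\ $a_1,a_2$ are inverse points, which is excluded. Hence no $4$-Poncelet central conic has such foci, and $D\neq0$ whenever $Q$ exists. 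The paper's reverse-triangle-inequality step silently needs the same fact. Note also that $D\neq0$ already implies $\overline{a_1}a_2\neq1$, so your separate appeal to the non-inverse hypothesis for $|1-\overline{a_1}a_2|\neq0$ is redundant.
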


\begin{proof}
Suppose first that the foci $a_1,a_2$ satisfy
$|a_1|<1$, $|a_2|<1$. From \eqref{eq:a3}, we have
\[
a_3=\frac{a_1(1-|a_2|^2)+a_2(1-|a_1|^2)}
     {1-|a_1|^2|a_2|^2}.
\]
Hence, by the triangle inequality,
\begin{align*}
|a_3|
&\le
\frac{|a_1|(1-|a_2|^2)+|a_2|(1-|a_1|^2)}
     {1-|a_1|^2|a_2|^2} \\
&=
\frac{(|a_1|+|a_2|)(1-|a_1||a_2|)}
     {(1-|a_1|^2|a_2|^2)} \\
&=
\frac{|a_1|+|a_2|}{1+|a_1||a_2|}.
\end{align*}
Since
\[
\frac{|a_1|+|a_2|}{1+|a_1||a_2|}
<1
\iff
(1-|a_1|)(1-|a_2|)>0,
\]
it follows that $|a_3|<1$. Thus the diagonals intersect inside the circumcircle.

The proof for the case $|a_1|>1$, $|a_2|>1$ is similar. 

Now we suppose that one focus lies inside the circumcircle and the other lies outside. For definiteness of argument, assume that $|a_1|>1>|a_2|$. Then applying the reverse triangle inequality to the following
\[
a_3=\frac{a_1(1-|a_2|^2)-a_2(|a_1|^2-1)}
     {1-|a_1|^2|a_2|^2}
\]
we obtain
\begin{align*}
    |a_3|& \ge 
\left|\frac{|a_1|(1-|a_2|^2)-|a_2|(|a_1|^2-1)}
     {\left|1-|a_1|^2|a_2|^2\right|}\right| \\
&=
\frac{|a_1|+|a_2|}{1+|a_1||a_2|}
\end{align*}
Since
\[
\frac{|a_1|+|a_2|}{1+|a_1||a_2|}
>1
\iff
(1-|a_1|)(1-|a_2|)<0,
\]
it follows that $|a_3|>1$. Thus the diagonals intersect outside the circumcircle.
\end{proof}

\begin{corollary}
Let $a_3$ be defined as in \eqref{eq:a3} and the circle $\Gamma$ be defined as in Corollary \ref{cor:gamma}. Then $a_3 \in \Gamma$. 
\end{corollary}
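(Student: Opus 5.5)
The plan is to reduce the claim to a single identity in $a_1,a_2$ and then verify it by direct algebra. For $p=4$, Corollary \ref{cor:gamma} describes $\Gamma$ as the circle with center $\frac{a_1+a_2+a_3}{4}$ and radius $\frac14|a_1a_2a_3|$. So $a_3\in\Gamma$ means
\[
\Bigl|a_3-\tfrac{a_1+a_2+a_3}{4}\Bigr|=\tfrac14|a_1a_2a_3|,
\qquad\text{i.e.}\qquad
|3a_3-a_1-a_2|^2=|a_1a_2|^2|a_3|^2 .
\]
The whole proof therefore rests on this one identity, with $a_3$ given by \eqref{eq:a3}.

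To prove it, I would write $D=1-|a_1|^2|a_2|^2$, which is real. I would use the form of \eqref{eq:a3} given in the proof of the preceding proposition,
\[
a_3=\frac{a_1(1-|a_2|^2)+a_2(1-|a_1|^2)}{D}.
\]
Then
\[
3a_3-a_1-a_2=\frac{a_1(2-3|a_2|^2+|a_1|^2|a_2|^2)+a_2(2-3|a_1|^2+|a_1|^2|a_2|^2)}{D}.
\]
After multiplying both sides by $D^2$, the identity becomes a polynomial equality in the quantities $|a_1|^2$, $|a_2|^2$ and $\operatorname{Re}(a_1\overline{a_2})$. I would expand both squared moduli as $|\alpha a_1+\beta a_2|^2=\alpha^2|a_1|^2+\beta^2|a_2|^2+2\alpha\beta\operatorname{Re}(a_1\overline{a_2})$, where the coefficients $\alpha,\beta$ are real. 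Comparing the coefficients of $\operatorname{Re}(a_1\overline{a_2})$ and the remaining terms on the two sides would then settle the identity. The argument uses only \eqref{eq:a3}, so it applies equally whether the foci lie inside or outside $\mathbb T$.

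The main obstacle is this coefficient comparison. Before expanding in general, I would first test the identity on the degenerate case $a_2=0$. There \eqref{eq:a3} gives $a_3=a_1$, so the left side equals $4|a_1|^2$ while the right side is $0$. This check must be resolved before the expansion is carried out. Concretely, I would recheck whether $\Gamma$ is meant to be normalized as in Corollary \ref{cor:gamma}, i.e. as the locus of the centroid, or instead as the locus of $z_1+\cdots+z_4$, and whether a nondegeneracy hypothesis such as $a_1a_2\neq0$ is implicit in \eqref{eq:a3}. Only after pinning down the normalization consistent with this test would I complete the symbolic expansion. If the test cannot be reconciled under any reading, the claim fails as worded, and the write-up should say so rather than force the computation.
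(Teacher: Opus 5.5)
Your $a_2=0$ test is correct, and it points to a problem with the statement as worded, not with your setup. Take $\Gamma$ literally as in Corollary~\ref{cor:gamma} for $p=4$, so its center is $\frac{a_1+a_2+a_3}{4}$ and its radius is $\frac14|a_1a_2a_3|$. Then the claim is false, and not only in degenerate cases. For $a_1=\tfrac12$, $a_2=\tfrac i2$, \eqref{eq:a3} gives $a_3=\tfrac25(1+i)$. So $|3a_3-a_1-a_2|=\tfrac{7\sqrt2}{10}$, while $|a_1a_2a_3|=\tfrac{\sqrt2}{10}$. A hypothesis such as $a_1a_2\neq0$ therefore does not rescue it.

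The paper's proof actually verifies $\bigl|a_3-\tfrac{a_1+a_2+a_3}{2}\bigr|=\tfrac12|a_1a_2a_3|$. That is, it uses the circle with the $\frac12$ normalization, which by \eqref{eq:ezea} is the locus of the anticenter $\frac{z_1+z_2+z_3+z_4}{2}$, not of the centroid.

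The gap in your proposal is that it stops at the diagnosis. The only alternative reading you offer is the locus of $z_1+\cdots+z_4$, with center $a_1+a_2+a_3$ and radius $|a_1a_2a_3|$. That reading fails your own test: for $a_2=0$ the radius is $0$, but $|a_3-(a_1+a_2+a_3)|=|a_1|$. You never identify a version that is true, and you do not prove one.

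The missing idea is a conjugation identity that makes the coefficient comparison unnecessary. Write $D=1-|a_1|^2|a_2|^2$, which is real and nonzero for \eqref{eq:a3} to make sense. Then $D\,\overline{a_3}=\overline{a_1}+\overline{a_2}-(a_1+a_2)\overline{a_1}\,\overline{a_2}$, and \eqref{eq:a3} gives
\[
D\,(a_3-a_1-a_2)=-a_1a_2(\overline{a_1}+\overline{a_2})+a_1a_2(a_1+a_2)\overline{a_1}\,\overline{a_2}=-a_1a_2\,D\,\overline{a_3}.
\]
Hence $a_3-(a_1+a_2)=-a_1a_2\overline{a_3}$, and so
\[
\Bigl|a_3-\tfrac{a_1+a_2+a_3}{2}\Bigr|=\tfrac12|a_3-a_1-a_2|=\tfrac12|a_1a_2a_3|,
\]
which is exactly the paper's computation. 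Your brute-force expansion would also succeed if aimed at the correct identity $|a_3-a_1-a_2|^2=|a_1a_2|^2|a_3|^2$.

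This corrected version passes your test. For $a_2=0$ the circle collapses to the single point $a_1=a_3$. This matches the later result that the anticenter is then the remaining focus.
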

\begin{proof}
\begin{align*}
    \left| a_3-\frac{a_1+a_2+a_3}{2} \right|
    &=\frac{1}{2}\left| \frac{a_1+a_2-(\overline{a_1}+\overline{a_2})a_1a_2}{1-|a_1|^2|a_2|^2}-(a_1+a_2)\right|\\
    &=\frac{1}{2}|a_1a_2|\left| \frac{-(\overline{a_1}+\overline{a_2})+(a_1+a_2)\overline{a_1}\overline{a_2}}{1-|a_1|^2|a_2|^2}\right|\\
    &=\frac{1}{2}|a_1a_2|\left|\overline{a_3}\right|\\
    &=\frac{1}{2}|a_1a_2a_3|
\end{align*}
\end{proof}

Recall that an \textit{orthoptic} of a curve is the locus of points at which a pair of tangents to the curve intersect at a right angle. The following is a well-known result:
\begin{theorem}\label{thm:orthoptic}
The orthoptic of a central conic, if it exists, is a circle.
\end{theorem}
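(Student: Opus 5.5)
We place the conic in its standard position $x^2/a^2+\varepsilon y^2/b^2=1$, as in \eqref{eq:mathcalD}, and find the locus of points $(x_0,y_0)$ from which two perpendicular tangents can be drawn. A line $y=mx+k$ (with non-vertical direction) is tangent to the conic exactly when $k^2=a^2m^2+\varepsilon b^2$. This is the standard tangency condition, obtained by substituting the line into the conic and setting the discriminant of the resulting quadratic equal to zero.

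Requiring the line to pass through $(x_0,y_0)$ gives the quadratic
\[
(x_0^2-a^2)m^2-2x_0y_0\,m+(y_0^2-\varepsilon b^2)=0
\]
in the slope $m$. Its two roots $m_1,m_2$ are the slopes of the two tangents from $(x_0,y_0)$. These tangents are perpendicular exactly when $m_1m_2=-1$, and by Vieta this means
\[
\frac{y_0^2-\varepsilon b^2}{x_0^2-a^2}=-1,
\qquad\text{that is,}\qquad
x_0^2+y_0^2=a^2-\varepsilon b^2 .
\]

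The vertical and horizontal tangents need separate treatment, since the quadratic degenerates when $x_0^2=a^2$. In that case one tangent is the vertical line $x=\pm a$, and the perpendicular tangent is horizontal. Horizontal tangents $y=k$ exist only when $\varepsilon=1$, and then $k=\pm b$. So the degenerate points are the corners $(\pm a,\pm b)$, which satisfy $x_0^2+y_0^2=a^2+b^2=a^2-\varepsilon b^2$ and therefore already lie on the same circle. For the hyperbola there are no horizontal tangents, so no extra points arise.

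We conclude that the orthoptic is contained in the circle centred at the centre of $\mathcal D$ with radius $\sqrt{a^2-\varepsilon b^2}$. For the ellipse this radius is $\sqrt{a^2+b^2}$. For the hyperbola it is $\sqrt{a^2-b^2}$, which requires $a>b$; when $a\le b$ the locus is empty or degenerate (for $a=b$ the rectangular hyperbola, it collapses to the centre only), which accounts for the hypothesis ``if it exists.''

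The main obstacle is the converse inclusion: showing that every point of this circle actually produces two real, perpendicular tangents. We handle it by checking that the discriminant of the slope quadratic is positive on the circle. Using $x_0^2+y_0^2=a^2-\varepsilon b^2$, this discriminant simplifies to a positive multiple of $a^2y_0^2+\varepsilon b^2x_0^2-\varepsilon a^2b^2$, the expression that is positive exactly for points outside the conic. For the ellipse this is immediate, since the circle of radius $\sqrt{a^2+b^2}$ lies outside it. For the hyperbola the points of the circle where this expression fails to be positive are the asymptote directions and the points inside the branches; there the limiting ``tangents'' are asymptotes, and these have to be excluded or counted as limits. In either case, away from these exceptional points, the locus is exactly the circle, which proves the theorem.
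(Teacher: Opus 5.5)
The paper gives no proof of this statement. It quotes the result as well known and only records that the orthoptic of \eqref{eq:mathcalD} is the circle $x^2+y^2=a^2+\varepsilon b^2$. Your tangent-slope and Vieta argument is the standard way to supply it, and its structure is sound: the tangency condition, the slope quadratic, the product of roots, the separate case $x_0^2=a^2$, and the converse via the discriminant.

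\textbf{Sign error.} From $m_1m_2=(y_0^2-\varepsilon b^2)/(x_0^2-a^2)=-1$ one gets $x_0^2+y_0^2=a^2+\varepsilon b^2$, not $a^2-\varepsilon b^2$. Your specializations ($\sqrt{a^2+b^2}$ for the ellipse, $\sqrt{a^2-b^2}$ for the hyperbola) and your corner check at $(\pm a,\pm b)$ are correct only for $a^2+\varepsilon b^2$. As written, ``$a^2+b^2=a^2-\varepsilon b^2$'' is false for $\varepsilon=1$. This matters in the paper: $a^2-\varepsilon b^2=c^2$ is the squared focal distance, so the circle you wrote down is the circle through the foci. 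That is precisely the \emph{other} alternative in Theorem~\ref{thm:orthopticorfoci}, so the slip would swap the two cases there.

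\textbf{The hyperbola part of your converse is muddled.} The identity
$x_0^2y_0^2-(x_0^2-a^2)(y_0^2-\varepsilon b^2)=a^2y_0^2+\varepsilon b^2x_0^2-\varepsilon a^2b^2$
holds identically, so the circle equation is not needed there. On the correct circle $x_0^2+y_0^2=a^2-b^2$ (with $a>b$) one has $x_0^2<a^2$. Hence the discriminant equals $a^2y_0^2+b^2(a^2-x_0^2)>0$ at every point, and no point of the circle lies inside a branch. Points inside the branches would occur on $x^2+y^2=a^2+b^2$, which is another trace of the sign slip.

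The only genuinely exceptional points are the four intersections of the circle with the asymptotes $y=\pm(b/a)x$. Your criterion $k^2=a^2m^2+\varepsilon b^2$ was derived assuming $b^2+\varepsilon a^2m^2\neq 0$, which fails for $m=\pm b/a$. At these four points one root of the slope quadratic is $\pm b/a$ with $k=0$, so that ``tangent'' is the asymptote itself. The other root, $\mp a/b$, gives a genuine tangent. These four points therefore belong to the orthoptic exactly when asymptotes are counted as tangents at infinity.

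With these corrections your argument proves the statement and recovers the paper's formula $x^2+y^2=a^2+\varepsilon b^2$.
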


In particular, the orthoptic of the conic $ \mathcal{D}$ in \eqref{eq:mathcalD} is the circle
\begin{equation*}
    x^2+y^2=a^2+\varepsilon b^2.
\end{equation*}

\begin{theorem}\label{thm:orthopticorfoci}
Let a cyclic quadrilateral $\mathcal Q$ be circumscribed about a central conic. Suppose that the circumcenter of $\mathcal Q$ coincides with the center of the conic. Then the circumcircle of the quadrilateral either passes through the foci of the conic or it is the orthoptic of the conic.
\end{theorem}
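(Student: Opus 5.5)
Normalize so that the circumcircle of $\mathcal Q$ is $\mathbb T$, with radius $R=1$. Place the conic $\mathcal D$ in the form \eqref{eq:mathcalD}, with center at the circumcenter $O$. Because the centers coincide, the foci are symmetric about $O$: $d_+=d_-=c$, where $c^2=a^2-\varepsilon b^2$ by \eqref{eq:a2b2c2}. In complex terms the foci are $a_1=c\,e^{i\theta}$ and $a_2=-a_1$, so $a_1+a_2=0$. I would then apply the Cayley--Fuss relation \eqref{eq:cayleyfuss} from Theorem~\ref{thm:cayleychapplefuss}, which characterizes when $(\mathbb T,\mathcal D)$ is a $4$-Poncelet pair.

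With $d_+^2=d_-^2=c^2$, relation \eqref{eq:cayleyfuss} becomes
\[
(R^2-c^2)\Bigl((R^2-c^2)^2-4\varepsilon b^2R^2+2a^2(R^2-c^2)\Bigr)=0 .
\]
This factorization is the whole point. The vanishing of the first factor gives $R=c$, so the circumcircle passes through both foci; this is the first alternative.

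For the second factor, set $R=1$ and substitute $c^2=a^2-\varepsilon b^2$. Expanding, I expect it to collapse to a perfect square, namely $(1-a^2-\varepsilon b^2)^2$. Indeed,
\[
(1-c^2)^2+2a^2(1-c^2)-4\varepsilon b^2=(1-c^2+a^2)^2-a^4-4\varepsilon b^2 .
\]
Using $c^2=a^2-\varepsilon b^2$, this equals $(1+\varepsilon b^2)^2-(a^2-\varepsilon b^2+\varepsilon b^2)^2-\cdots$, and after routine algebra it reduces to $(1-a^2-\varepsilon b^2)^2$. So its vanishing gives $R^2=a^2+\varepsilon b^2$. By Theorem~\ref{thm:orthoptic} and the displayed equation of the orthoptic, $x^2+y^2=a^2+\varepsilon b^2$, the circumcircle is then exactly the orthoptic of $\mathcal D$; this is the second alternative.

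Alternatively, one could argue through Theorem~\ref{thm:ellorhyp4} with $a_2=-a_1$. There $1-\overline{a_1}a_2=1+|a_1|^2$, and the expression for $4\varepsilon b^2$ simplifies to
\[
\frac{(1-|a_1|^2)^2(1+|a_1|^2)}{1-|a_1|^4}\cdot 2=2(1-|a_1|^2),
\]
which gives $a^2+\varepsilon b^2=2c^2+2\varepsilon b^2=1$ directly. However, that theorem assumes $a_1,a_2\notin\mathbb T$, which excludes exactly the foci-on-circle case. So I would use the Cayley--Fuss route, which naturally produces both alternatives at once.

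The main obstacle is the algebraic verification that the second factor is exactly $(R^2-a^2-\varepsilon b^2)^2$, up to a constant, rather than merely having this as one root. I would check this carefully by expanding in $a^2$ and $\varepsilon b^2$.
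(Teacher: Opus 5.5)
Your plan is the same as the paper's: put $d_+=d_-=c$ into the Cayley--Fuss relation, factor, and read off the two alternatives. The gap is exactly at the step you flagged. The second factor is \emph{not} a perfect square, even up to a constant. Carrying your own computation one line further (using $\varepsilon^2=1$) gives
\[
(1+\varepsilon b^2)^2-4\varepsilon b^2-a^4=(1-\varepsilon b^2)^2-a^4=(1-a^2-\varepsilon b^2)(1+a^2-\varepsilon b^2).
\]
So for general $R$, since $a^2-\varepsilon b^2=c^2$, the relation reads $(R^2-c^2)(R^2-a^2-\varepsilon b^2)(R^2+c^2)=0$. The bracket therefore has a second root $R^2=-c^2$, and ``its vanishing gives $R^2=a^2+\varepsilon b^2$'' does not follow from what you wrote. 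The missing step, which the paper's proof supplies, is this: $c^2=a^2-\varepsilon b^2\ge 0$ and $R>0$, so the factor $R^2+c^2$ is strictly positive and can be discarded. Only then does the remaining factor force $R^2=a^2+\varepsilon b^2$, i.e.\ the circumcircle is the orthoptic.

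Your alternative via Theorem~\ref{thm:ellorhyp4} is in fact a complete proof once you split cases. If the foci lie on $\mathbb T$, you are in the first alternative. Otherwise the hypotheses of that theorem hold for $c\neq 0$, because $-a_1$ is never the inverse point $a_1/|a_1|^2$ of $a_1$; the circle case $c=0$ is covered by the corrected factorization above. This route avoids the spurious root because dividing by $1-|a_1|^4=(1-|a_1|^2)(1+|a_1|^2)$ cancels the positive factor $1+c^2$ that your perfect-square claim overlooked. One correction there: $a^2+\varepsilon b^2=c^2+2\varepsilon b^2$, not $2c^2+2\varepsilon b^2$. It equals $1$ because $2\varepsilon b^2=1-c^2$.
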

\begin{proof}
Assume that $\mathcal D$ is given by \eqref{eq:mathcalD}. Since the circumcenter of $\mathcal Q$ coincides with the center of $\mathcal D$, the distances from the center to the two foci are equal:
\[
d_+=d_-=\sqrt{a^2-\varepsilon b^2}.
\]
Substituting this into the generalized Cayley--Fuss relation \eqref{eq:cayleyfuss} gives
\[
(R^2-a^2+\varepsilon b^2)
(R^2-a^2-\varepsilon b^2)
(R^2+a^2-\varepsilon b^2)=0.
\]

Since
\[
a^2-\varepsilon b^2=c^2\ge0
\quad\text{and}\quad
R>0,
\]
the factor
\[
R^2+a^2-\varepsilon b^2=R^2+c^2
\]
is strictly positive and cannot vanish. Hence
\[
R^2=a^2\pm\varepsilon b^2.
\]

If
\[
R^2=a^2+\varepsilon b^2,
\]
then the circumcircle has equation
\[
x^2+y^2=a^2+\varepsilon b^2,
\]
which is precisely the orthoptic of $\mathcal D$ by Theorem~\ref{thm:orthoptic}.

On the other hand, if
\[
R^2=a^2-\varepsilon b^2=c^2,
\]
then the radius of the circumcircle equals the focal distance. Hence the circumcircle passes through the two foci of $\mathcal D$.

This completes the proof.
\end{proof}

\begin{theorem}
Let $Q$ be a cyclic quadrilateral circumscribed about a central conic $\mathcal D$ whose foci  neither lie on the circumcircle $\mathcal C$ of $Q$ nor the inverse points with respect to $\mathcal C$. If the circumcenter of $Q$ coincides with a focus of $\mathcal D$, then the anticenter of $Q$ coincides with the remaining focus of $\mathcal D$.
\end{theorem}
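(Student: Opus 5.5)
We normalize so that the circumcircle $\mathcal C$ is the unit circle $\mathbb T$. The hypothesis then places the focus $a_2$ of $\mathcal D$ at $0$. Write $a_1$ for the remaining focus; the assumptions give $a_1\ne 0$ and $a_1\notin\mathbb T$, and no focus is an inverse point. The anticenter of a cyclic quadrilateral with vertices $z_1,\dots,z_4\in\mathbb T$ is
\[
z_T=\tfrac12(z_1+z_2+z_3+z_4).
\]
So we must show $z_1+z_2+z_3+z_4=2a_1$. The plan is to view $\mathcal D$ as the degenerate Siebeck--Marden curve of class $3$ with foci $a_1,a_2,a_3$, where $a_3$ is given by \eqref{eq:a3}. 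We then apply \eqref{eq:ezea} with $r=1$, exactly as in Corollary~\ref{cor:anticen}.

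\textbf{Step 1.} Compute $a_3$. Setting $a_2=0$ in \eqref{eq:a3} gives
\[
a_3=\frac{a_1+0-(\overline{a_1}+0)\cdot 0}{1-0}=a_1 .
\]
The three foci of the class-$3$ curve are therefore $a_1,0,a_1$.

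\textbf{Step 2.} Apply Theorem~\ref{thm:general-symmetric} with $n=3$ and $r=1$:
\[
z_1+z_2+z_3+z_4=a_1+a_2+a_3+(-1)^3\,\overline{a_1a_2a_3}\,\lambda .
\]
Since $a_2=0$, the $\lambda$-term vanishes and the sum equals $2a_1$. Hence $z_T=a_1$, which is the remaining focus. This is consistent with Corollary~\ref{cor:anticen}, since the midpoint of $a_1$ and $a_3=a_1$ is $a_1$.

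\textbf{Main obstacle.} The real difficulty is justifying that Theorem~\ref{thm:general-symmetric} applies with foci $(a_1,0,a_1)$. This requires two facts.
\begin{itemize}
\item There must exist real weights $m_j$ with $\prod m_j\neq0$ and $\sum m_j\neq 0$ such that $F(z)=\sum m_j/(z-z_j)$ has zeros $a_1,a_2,a_3$. Theorem~\ref{thm:siebeckmardenp} guarantees this because the class-$3$ curve degenerates into $\mathcal D$ together with the diagonal intersection point.
\item Formula \eqref{eq:a3} must remain valid when a focus is outside $\mathbb T$; the paper asserts this just before the Proposition.
\end{itemize}
The hypotheses excluding foci on $\mathcal C$ and inverse pairs are exactly what keeps the denominator $1-|a_1|^2|a_2|^2$ nonzero. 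They also make the unique $4$-Poncelet conic of Theorem~\ref{thm:ellorhyp4} well defined, so both facts can be invoked.

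If one prefers to avoid the degenerate-curve argument, there is a direct check through Theorem~\ref{thm:orthopticorfoci}-type reasoning. One computes the anticenter using the Cayley--Fuss relation \eqref{eq:cayleyfuss} with $d_-=0$ and $d_+=|a_1|$, together with the fact that $a_3$, the intersection of the diagonals, equals $a_1$. However, the route through \eqref{eq:ezea} is the one I would carry out.
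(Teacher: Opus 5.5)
Your proof is correct and is essentially the paper's argument. Placing the circumcentral focus at $0$ in \eqref{eq:a3} makes $a_3$ equal to the other focus, and the $r=1$ case of \eqref{eq:ezea} (which is exactly Corollary~\ref{cor:anticen}) then gives anticenter $\tfrac12(z_1+\cdots+z_4)$ equal to that focus; like the paper, you rely on the degenerate class-$3$ curve picture behind \eqref{eq:a3}, and you cite Theorem~\ref{thm:siebeckmardenp} for it, although that theorem does not by itself supply the weights. One correction: the hypotheses on the foci are not what keeps $1-|a_1|^2|a_2|^2$ nonzero, since that denominator equals $1$ once a focus is at $0$, and its vanishing is in general not the inverse-point condition $\overline{a_1}a_2=1$; the paper uses them only in its converse step, where $a_3=a_2$ becomes $a_1(1-|a_2|^2)(1-\overline{a_1}a_2)=0$ and the last two factors are cancelled to get $a_1=0$.
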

\begin{proof} Let $a_1,a_2$ denote the real foci (identified as complex points in $\mathbb C$) of $\mathcal D$. Then the diagonals of $\mathcal Q$ intersect at $a_3$ as given by \eqref{eq:a3}.

First assume $a_1=0$. Then $a_3=a_2$. Also note that the anticenter of $Q$ coincides with $a_2$ (Corollary \ref{cor:anticen}).

Conversely, let $a_3=a_2$. This gives   
\[
    a_1(1-|a_2|^2)(1-\overline{a_1}a_2)=0.
\]
Since $a_1,a_2 \notin \mathbb T$ or $a_1,a_2$ are not inverse points with respect to $\mathbb T$, the second and the third factors cannot be equal to 0. Therefore, $a_1=0$.
\end{proof}

\begin{theorem}\label{thm:orthodiag}
Let $Q$ be a cyclic quadrilateral circumscribed a central conic $\mathcal D$. Assume that the circumcenter of $Q$ coincides with a focus of $\mathcal D$. Then $Q$ is orthodiagonal. (See Figure \ref{fig:orthodiag}.)
\end{theorem}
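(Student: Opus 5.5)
We normalize so that the circumcircle of $Q$ is $\mathbb T$ and the focus at the circumcenter is $a_1=0$. The plan is to use the symmetric parametrization of Theorem~\ref{thm:general-symmetric} with $n=3$, together with the fact recalled before \eqref{eq:a3} that the third Siebeck--Marden focus is
\[
a_3=\frac{a_1+a_2-(\overline{a_1}+\overline{a_2})a_1a_2}{1-|a_1|^2|a_2|^2}.
\]
Setting $a_1=0$ gives $a_3=a_2$. Call the remaining focus $a:=a_2$, so the three Siebeck--Marden foci are $0,a,a$.

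Next we substitute into \eqref{eq:ezea}, where $n=3$ so $(-1)^n=-1$. This gives
\begin{align*}
e_1(z)&=2a - \overline{a_1a_2a_3}\,\lambda = 2a,\\
e_2(z)&=a^2-\overline{e_2(0,a,a)}\,\lambda = a^2-\overline{a}^2\lambda,\\
e_3(z)&=0-\overline{e_1(0,a,a)}\,\lambda=-2\overline a\lambda,\\
e_4(z)&=-\lambda .
\end{align*}
Equivalently, by the corollary giving \eqref{eq:genblasch}, the vertices are the roots of $z^2(z-a)^2=\lambda(1-\overline a z)^2$. This factors as $z(z-a)=\pm\mu(1-\overline a z)$ with $\mu^2=\lambda$. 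So the four vertices split into two pairs, each pair being the two roots of a quadratic
\[
z^2-(a\mp\mu\overline a)z\mp\mu=0 .
\]

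The key step is to show that these two pairs are the endpoints of the two diagonals. This needs care: the pairing by sign must match the actual cyclic order of the vertices. Once it is established, let the diagonals be $z_1z_3$ and $z_2z_4$ with $z_1z_3=\mu$ and $z_2z_4=-\mu$. For chords of $\mathbb T$, the chord $z_1z_3$ has direction $z_3-z_1$, and $(z_3-z_1)^2/(-z_1z_3)$ is a positive real. Hence the direction of the chord is determined up to sign by $\sqrt{-z_1z_3}$. Two chords $z_1z_3$ and $z_2z_4$ are perpendicular if and only if
\[
\frac{(z_3-z_1)^2}{(z_4-z_2)^2}<0,
\]
which holds if and only if $z_1z_3/(z_2z_4)$ is a negative real. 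Here $z_1z_3/(z_2z_4)=\mu/(-\mu)=-1$, so orthodiagonality follows.

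The main obstacle is justifying the pairing, that is, that the roots of each quadratic factor are opposite vertices rather than adjacent ones. I would argue by continuity in $\lambda$ and in the configuration. The products $z_1z_3=-z_2z_4$ alternative with adjacent pairs would force two sides to be perpendicular for all $\lambda$. I would rule this out using that the intersection of the diagonals is the focus $a_3=a$, as asserted after \eqref{eq:a3}. Indeed, the intersection point of chords $z_1z_3$ and $z_2z_4$ is
\[
\frac{z_1z_3(z_2+z_4)-z_2z_4(z_1+z_3)}{z_1z_3-z_2z_4},
\]
and with the pairing above this simplifies to
\[
\frac{(z_1+z_2+z_3+z_4)}{2}-\ldots = a
\]
(a direct check using the two quadratics). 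This confirms that these pairs really are the diagonals, since the wrong pairing would not give the known diagonal intersection.
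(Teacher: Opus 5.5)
Your proof is correct, granting (as the paper also does) that the third Siebeck--Marden focus is given by \eqref{eq:a3} and that the diagonals pass through it. It follows a different line from the paper's.

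The paper argues synthetically. With $a_1=0$, the diagonal point $a_3=a_2$ is also the anticenter (Corollary~\ref{cor:anticen}). A cyclic quadrilateral whose diagonal point is its anticenter is orthodiagonal or a rectangle. The rectangle is excluded because $\mathbb T$ would then be the orthoptic of $\mathcal D$, hence concentric with it.

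You instead notice that for the foci $0,a,a$ the M\"obius-product equation is a perfect square, $\bigl(z(z-a)/(1-\overline{a}z)\bigr)^2=\lambda$. So the vertices split into two pairs with products $\mu$ and $-\mu$. The criterion that chords $z_1z_3$, $z_2z_4$ of $\mathbb T$ are perpendicular if and only if $z_1z_3/(z_2z_4)$ is a negative real then finishes.

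Comparing the two:
\begin{itemize}
\item Your route is purely algebraic and has no rectangle case to exclude, since the ratio is exactly $-1$. It also exposes the square decomposition behind the decomposability criterion.
\item It re-derives the paper's key input. With $z_1z_3=\mu=-z_2z_4$, your intersection formula gives exactly $(z_1+z_2+z_3+z_4)/2=e_1/2=a$. So the ``$-\ldots$'' in your display should be deleted, and the diagonal point is visibly the anticenter.
\item The paper's route is shorter given the preceding theorem. However, it leans on a classical characterization (diagonal point $=$ anticenter implies orthodiagonal or rectangle) that it does not prove.
\end{itemize}

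Two things to tighten.

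First, you conclude from ``both lines meet at $a$'' that your pairs are the diagonals. Add that two different pairings of four concyclic points cannot have the same intersection point. Such a point would lie on two distinct lines through a common vertex, hence equal that vertex, whereas $|a|\neq 1$.

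Second, drop the continuity idea and the appeal to the circular order on $\mathbb T$. When $|a|>1$, $\mathcal D$ is a hyperbola (as in Figure~\ref{fig:orthodiag}). Your two chords then meet at $a$ outside $\mathbb T$, so they do not cross, and the endpoints of each are adjacent in the circular order. Here ``diagonal'' must mean Poncelet-opposite vertices. That is exactly what the fact that the diagonals pass through $a_3$ encodes, so that fact, not the circular order, is what fixes your pairing.
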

\begin{proof} Since the diagonals of $Q$ intersect at its anticenter, $Q$ is either a rectangle or orthodiagonal. If $Q$ is a rectangle, then $\mathbb T$ must be the orthoptic of $\mathcal D$ and hence must be concentric with $\mathcal D$; a contradiction! 
\end{proof}

\begin{figure}
  \begin{subfigure}[b]{0.4\textwidth}
    \centering
\definecolor{wqwqwq}{rgb}{0.3764705882352941,0.3764705882352941,0.3764705882352941}
\begin{tikzpicture}[scale=2.5]
\clip(-1.2,-1.2) rectangle (1.2,1.2);
\draw[line width=1.pt,color=wqwqwq,fill=wqwqwq,fill opacity=0.10000000149011612] (0.5661784727134428,0.3220018138066305) -- (0.4941766589068122,0.3881802865200733) -- (0.4279981861933695,0.3161784727134428) -- (0.5,0.25) -- cycle; 
\draw [line width=1.pt,gray] (0.,0.) circle (1.cm);
\draw [rotate around={26.56505117707799:(0.25,0.125)},line width=1.pt,color=red] (0.25,0.125) ellipse (0.649519053431628cm and 0.5863019706351992cm);
\draw [line width=1.pt,blue] (-0.27427323751504007,0.9616518035042717)-- (-0.5167014952387196,-0.8561656176336863);
\draw [line width=1.pt,blue] (-0.5167014952387196,-0.8561656176336863)-- (0.9813184436212167,-0.192390520059677);
\draw [line width=1.pt,blue] (0.9813184436212167,-0.192390520059677)-- (0.8096562865988393,0.5869043342581297);
\draw [line width=1.pt,blue] (0.8096562865988393,0.5869043342581297)-- (-0.27427323751504007,0.9616518035042717);
\draw [line width=1.pt,brown] (-0.27427323751504007,0.9616518035042717)-- (0.9813184436212167,-0.192390520059677);
\draw [line width=1.pt,brown] (0.8096562865988393,0.5869043342581297)-- (-0.5167014952387196,-0.8561656176336863);
\draw [line width=1.pt,qqwuqq] (0.3535226030530883,0.38463064172229733)-- (0.14647739568005985,-0.13463064168777827);
\begin{scriptsize}
\draw[color=black] (-0.5827053148192841,0.9024101337733431) node {$\mathbb T$};
\draw [fill=black] (0.,0.) circle (0.7pt);
\draw[color=black] (0.041962958818530266,0.09794803967889065) node {$a_{1}$};
\draw [fill=black] (0.5,0.25) circle (0.7pt);
\draw[color=black] (0.5075828528216244,0.15) node {$a_{2}$};
\draw[color=red] (-0.14935610653917675,0.65) node {$\mathcal D$};
\draw [fill=wewdxt] (-0.27427323751504007,0.9616518035042717) circle (0.7pt);
\draw[color=wewdxt] (-0.29,1.07) node {$z_1$};
\draw [fill=wewdxt] (-0.5167014952387196,-0.8561656176336863) circle (0.7pt);
\draw[color=wewdxt] (-0.56,-0.95) node {$z_2$};
\draw [fill=wewdxt] (0.9813184436212167,-0.192390520059677) circle (0.7pt);
\draw[color=wewdxt] (1.08,-0.21) node {$z_3$};
\draw [fill=wewdxt] (0.8096562865988393,0.5869043342581297) circle (0.7pt);
\draw[color=wewdxt] (0.88,0.66) node {$z_4$};
\draw [fill=wewdxt] (0.3535226030530883,0.38463064172229733) circle (0.7pt);
\draw[color=wewdxt] (0.37,0.48) node {$L$};
\draw [fill=wewdxt] (0.14647739568005985,-0.13463064168777827) circle (0.7pt);
\draw[color=wewdxt] (0.17,-0.22) node {$M$};
\end{scriptsize}
\end{tikzpicture}
    \caption{$\mathcal D$ is an ellipse}
    \label{fig:orthodiag(A)}
\end{subfigure}
  \begin{subfigure}[b]{0.4\textwidth}
    \centering
\definecolor{qqwuqq}{rgb}{0.,0.4,0.}
\definecolor{zzttqq}{rgb}{0.6,0.2,0.}
\definecolor{wqwqwq}{rgb}{0.3764705882352941,0.3764705882352941,0.3764705882352941}
\definecolor{ffqqqq}{rgb}{1.,0.,0.}
\begin{tikzpicture}[scale=2.5]
\clip(-1.4,-1.4) rectangle (2.0,1.7);
\draw[line width=1.pt,color=wqwqwq,fill=wqwqwq,fill opacity=0.25] (0.9,0.53) -- (0.87,0.43) -- (0.97,0.39) -- (1.,0.5) -- cycle; 
\draw [line width=1.pt,gray] (0.,0.) circle (1.cm);
\draw [samples=50,domain=-0.99:0.99,rotate around={26.56505117707799:(0.5,0.25)},xshift=0.5cm,yshift=0.25cm,line width=1.pt,color=ffqqqq] plot ({0.4330127024391393*(1+(\x)^2)/(1-(\x)^2)},{0.35355338992343643*2*(\x)/(1-(\x)^2)});
\draw [samples=50,domain=-0.99:0.99,rotate around={26.56505117707799:(0.5,0.25)},xshift=0.5cm,yshift=0.25cm,line width=1.pt,color=ffqqqq] plot ({0.4330127024391393*(-1-(\x)^2)/(1-(\x)^2)},{0.35355338992343643*(-2)*(\x)/(1-(\x)^2)});
\draw [line width=1.pt,color=blue] (-0.3452235151888288,0.9385204976774181)-- (0.5689224688951009,-0.8223911626387428);
\draw [line width=1.pt,color=blue] (0.5689224688951009,-0.8223911626387428)-- (0.8320088813277284,0.5547623107167446);
\draw [line width=1.pt,color=blue] (0.8320088813277284,0.5547623107167446)-- (0.9442921642269864,0.3291083538585951);
\draw [line width=1.pt,color=blue] (0.9442921642269864,0.3291083538585951)-- (-0.3452235151888288,0.9385204976774181);
\draw [line width=1.pt,brown] (-0.3452235151888288,0.9385204976774181)-- (1.,0.5);
\draw [line width=1.pt,brown] (0.5689224688951009,-0.8223911626387428)-- (1.,0.5);
\draw [line width=1.pt,color=qqwuqq] (0.2433926830694498,0.7466414041970814)-- (0.7566073165610436,-0.24664140439007384);
\begin{scriptsize}
\draw [fill=black] (0.,0.) circle (0.7pt);
\draw[color=black] (-0.08,-0.08) node {$a_1$};
\draw[color=black] (-0.6631196061722898,0.8531769825074987) node {$\mathbb T$};
\draw [fill=black] (1.,0.5) circle (0.7pt);
\draw[color=black] (1.1,0.54) node {$a_2$};
\draw[color=red] (0.901715924202119,1.163045404363817) node {$\mathcal D$};
\draw [fill=wqwqwq] (-0.3452235151888288,0.9385204976774181) circle (0.7pt);
\draw[color=wqwqwq] (-0.3687446054087872,1.0236046145284736) node {$z_1$};
\draw [fill=wqwqwq] (0.5689224688951009,-0.8223911626387428) circle (0.7pt);
\draw[color=wqwqwq] (0.6,-0.92) node {$z_2$};
\draw [fill=wewdxt] (0.8320088813277284,0.5547623107167446) circle (0.7pt);
\draw[color=wewdxt] (0.92,0.61) node {$z_3$};
\draw [fill=wewdxt] (0.9442921642269864,0.3291083538585951) circle (0.7pt);
\draw[color=wewdxt] (1.05,0.37) node {$z_4$};
\draw [fill=wewdxt] (0.2433926830694498,0.7466414041970814) circle (0.7pt);
\draw[color=wewdxt] (0.28197908048948184,0.8325190877170775) node {$L$};
\draw [fill=wewdxt] (0.7566073165610436,-0.24664140439007384) circle (0.7pt);
\draw[color=wewdxt] (0.85,-0.25) node {$M$};
\end{scriptsize}
\end{tikzpicture}
    \caption{$\mathcal D$ is a hyperbola}
    \label{fig:orthodiag(B)}
\end{subfigure}
    \caption{Illustration of Theorem \ref{thm:orthodiag}. The circumcenter of the quadrilateral with ordered vertices $z_1,z_2,z_3,z_4$ is orthodiagonal.}
    \label{fig:orthodiag}
\end{figure}

For a classical account of the geometric properties of cyclic orthodiagonal quadrilaterals, see \cite{AltshillerCourt1952}. The following corollaries are immediate consequences of Theorem \ref{thm:orthodiag}.

\begin{lemma}\label{lem:3}
Let $z_1,z_2,z_3,z_4 \in \mathbb T$ be the vertices of a cyclic quadrilateral $Q$ circumscribed about a central conic $\mathcal D$. Assume that the circumcenter of $Q$ coincides with a focus $f \in \mathbb C$ of $\mathcal D$. Then
\[
|z_1-f|^2+|z_2-f|^2+|z_3-f|^2+|z_4-f|^2=4.
\]
\end{lemma}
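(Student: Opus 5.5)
Since $z_1,z_2,z_3,z_4\in\mathbb T$, the circumcircle of $Q$ is $\mathbb T$ itself and its circumcenter is the origin $O=0$. The hypothesis that the circumcenter coincides with a focus of $\mathcal D$ therefore forces $f=0$. I plan to make this identification first and then evaluate the sum directly.

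With $f=0$, each term becomes $|z_j-f|^2=|z_j|^2=1$ because $|z_j|=1$. Summing over $j=1,\dots,4$ gives the value $4$. No use of the conic is needed beyond the hypothesis that fixes $f$.

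As a consistency check in the spirit of the preceding results, I would also verify the identity through the general expansion for an arbitrary point $w$:
\[
\sum_{j=1}^4|z_j-w|^2=\sum_{j=1}^4|z_j|^2-2\operatorname{Re}\Bigl(\overline{w}\sum_{j=1}^4 z_j\Bigr)+4|w|^2 .
\]
Taking $w=f=a_1$ with $a_1=0$ kills the last two terms. This holds regardless of the value of $z_1+\cdots+z_4=a_1+a_2+a_3-\overline{a_1a_2a_3}\lambda$ from \eqref{eq:ezea}, which by Corollary~\ref{cor:anticen} equals $a_1+a_2$ here.

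There is no genuine obstacle. The only point requiring care is stating explicitly that, under the normalization that the circumcircle is $\mathbb T$, the circumcenter is $0$. This is what turns the hypothesis ``circumcenter $=$ focus'' into $f=0$.
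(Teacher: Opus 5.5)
Your argument is correct for the statement as literally worded. Since $z_j\in\mathbb T$, the circumcenter is $0$, so a focus there is $f=0$ and the sum is $\sum_j|z_j|^2=4$. Read this way, however, the lemma is trivial, and it is not the statement the paper actually proves or uses.

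The paper's proof starts from exactly your ``consistency check'' expansion, $\sum_j|z_j-f|^2=4-2\operatorname{Re}(\overline{\sigma_1}f)+4|f|^2$ with $\sigma_1=z_1+z_2+z_3+z_4$. It then substitutes $\sigma_1=2f$. That step is the preceding theorem: the anticenter $\sigma_1/2$ coincides with the \emph{remaining} focus. So $f$ is meant to be the focus that is \emph{not} at the circumcenter. This is also how the lemma is applied in Corollary~\ref{cor:sumsqside}, where $f=F_-$ is the intersection point of the diagonals, at which $\triangle AF_-B$ and $\triangle CF_-D$ have their right angles. For that version your key step $f=0$ is unavailable, and the missing ingredient is $\sigma_1=2f$.

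You nearly had it, but the labeling in your check is off. If the circumcentral focus is $a_1=0$, then \eqref{eq:ezea} gives $\sigma_1=a_2+a_3$, not $a_1+a_2$; Corollary~\ref{cor:anticen} is stated with $a_3=0$ instead. Moreover, \eqref{eq:a3} with $a_1=0$ gives $a_3=a_2$, hence $\sigma_1=2a_2$. Taking $w=a_2$ in your expansion then gives $4-4|a_2|^2+4|a_2|^2=4$.

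More generally, with $\sigma_1=2a_2$ the expansion equals $4+4\bigl(|w|^2-\operatorname{Re}(\overline{a_2}w)\bigr)$. This is $4$ exactly when $w$ lies on the circle having the segment from $0$ to $a_2$ as a diameter. Both foci lie on that circle, so the paper's one-line computation is valid under either reading of $f$, whereas your argument covers only the trivial one.
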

\begin{proof}
\begin{align*}
|z_1-f|^2+|z_2-f|^2+|z_3-f|^2+|z_4-f|^2&=4-2\operatorname{Re}(\overline{\sigma_1}f)+4|f|^2\\
&=4-4\operatorname{Re}(\overline{f}f)+4|f|^2\\
&=4.
\end{align*}
This proves the claim.
\end{proof}

\begin{corollary}\label{cor:sumsqside}
Let $Q$ be the cyclic quadrilateral as in Theorem \ref{thm:orthodiag}. Then the sum of the squares of the pair of opposite sides of $Q$ remains invariant throughout $\mathcal P$.
\end{corollary}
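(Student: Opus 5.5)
Since $Q$ is orthodiagonal by Theorem~\ref{thm:orthodiag}, the plan is to combine that with the classical identity for cyclic orthodiagonal quadrilaterals. For a quadrilateral with perpendicular diagonals meeting at $P$, each side is the hypotenuse of a right triangle with legs along the diagonals. Write the vertices in cyclic order $z_1,z_2,z_3,z_4$, with diagonals $z_1z_3$ and $z_2z_4$ and $p_j=|z_j-P|$. Then
\[
|z_1z_2|^2+|z_3z_4|^2=p_1^2+p_2^2+p_3^2+p_4^2=|z_2z_3|^2+|z_4z_1|^2 .
\]
So the two pairs of opposite sides have the same sum of squares, and that common value is the sum of the four squared distances from the diagonal intersection to the vertices.

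Next I identify $P$ and evaluate this sum. By the preceding theorem (the anticenter coincides with the remaining focus when the circumcenter is a focus), and since in an orthodiagonal cyclic quadrilateral the diagonals meet at the anticenter, $P$ is the other focus $f$ of $\mathcal D$. Normalize the circumcircle to $\mathbb T$ with the focus $a_1=0$ at its center. Lemma~\ref{lem:3}, applied to the focus at the diagonal intersection (whose proof uses only $\sigma_1=z_1+\dots+z_4=2f$, from Corollary~\ref{cor:anticen}), gives $\sum_j|z_j-f|^2=4$.

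Alternatively, compute directly. For $z_j\in\mathbb T$,
\[
\sum_j|z_j-f|^2=4-2\operatorname{Re}(\overline f\sigma_1)+4|f|^2 .
\]
Substituting $\sigma_1=2f$ gives $4-4|f|^2+4|f|^2=4$. Hence each pair of opposite sides has sum of squares equal to $4$, i.e.\ $4R^2$ after rescaling, independent of the member of $\mathcal P$. Only the relations among $f$, $\sigma_1$ and the circumcenter are needed, and these hold for every quadrilateral of the family, so invariance follows.

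The main obstacle is bookkeeping. We must make sure the labeling of the diagonals matches the cyclic order, since the Möbius-product parametrization does not order the vertices. We must also justify that the whole Poncelet family shares the same foci and circumcenter, so that the focus hypothesis persists along $\mathcal P$. I would state explicitly that $\mathcal P$ consists of quadrilaterals inscribed in $\mathbb T$ and circumscribed about the fixed $\mathcal D$.
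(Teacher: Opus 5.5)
Your proposal is correct and follows the paper's proof. It uses orthodiagonality from Theorem~\ref{thm:orthodiag}, the right triangles with common vertex at the diagonal intersection (the remaining focus), and Lemma~\ref{lem:3} to get $|z_1z_2|^2+|z_3z_4|^2=|z_2z_3|^2+|z_4z_1|^2=4$; your explicit identification of that intersection with the remaining focus, and your observation that Lemma~\ref{lem:3} is really applied to the non-central focus via $\sigma_1=2f$, spell out what the paper's short argument leaves implicit.
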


\begin{proof}
Since $ABCD$ is orthodiagonal, $\triangle AF_-B$ and $\triangle CF_-D$ are right-angled, giving
\[
|AB|^2+|CD|^2=|AF_-|^2+|BF_-|^2+|CF_-|^2+|DF_-|^2=4.
\]
Similarly, $|BC|^2+|DA|^2=4$, and the result follows.

Consequently,
\begin{equation}\label{eq:sqsumside}
|AB|^2+|BC|^2+|CD|^2+|DA|^2=8.
\end{equation}
\end{proof}

\begin{corollary}\label{cor:sumsqdiag}
Let $Q$ be the cyclic quadrilateral as in Theorem \ref{thm:orthodiag}. Then the sum of the squares of the diagonals of $Q$ equals $16a^2$, where $a$ is the length of the semi-major axis of the conic.
\end{corollary}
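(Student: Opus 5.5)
The plan is to normalize $\mathcal C=\mathbb T$ and place the focus that coincides with the circumcenter at the origin, so $a_1=0$; write $f:=a_2$ for the other focus. I would then compute the sum of the squares of the diagonals in terms of $|f|$ and compare it with the major-axis formula of Theorem~\ref{thm:ellorhyp4}.

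First, the position of the diagonal intersection. With $a_1=0$, formula \eqref{eq:a3} gives $a_3=a_2=f$. By the remark following \eqref{eq:a3}, $a_3$ is the intersection point $P$ of the diagonals, so $P=f$. This agrees with the preceding theorem, which identifies the anticenter with the remaining focus. By Theorem~\ref{thm:orthodiag}, the diagonals $AC$ and $BD$ are perpendicular lines through $P=f$.

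Next, a chord computation. For any line $\ell$ meeting $\mathbb T$ in a chord, the chord length is $2\sqrt{1-\delta_\ell^2}$, where $\delta_\ell$ is the distance from $O$ to $\ell$. Since the diagonal lines are perpendicular and both pass through $P$, the feet of the perpendiculars from $O$ together with $P$ and $O$ form a rectangle. Hence $\delta_{AC}^2+\delta_{BD}^2=|OP|^2=|f|^2$, and therefore
\[
|AC|^2+|BD|^2=4(1-\delta_{AC}^2)+4(1-\delta_{BD}^2)=8-4|f|^2 .
\]
This argument uses only the lines, not the location of $P$. So it also covers the hyperbolic case, where by the earlier Proposition the diagonals meet outside $\mathbb T$.

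Finally, substitute $a_1=0$ into the major-axis formula of Theorem~\ref{thm:ellorhyp4}:
\[
(2a)^2=|1-0|^2\,\frac{2-0-|f|^2}{1-0}=2-|f|^2,
\]
so $16a^2=8-4|f|^2$, which matches the diagonal sum. The only delicate point I anticipate is justifying that $a_3$ in \eqref{eq:a3} really is the diagonal intersection when a focus lies outside $\mathbb T$, since the paper only asserts this. If needed, I would instead invoke Corollary~\ref{cor:anticen} together with the fact that in an orthodiagonal cyclic quadrilateral the anticenter is the diagonal intersection; this gives $P=f$ directly.
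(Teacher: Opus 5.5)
Your argument is correct, but you compute the diagonal sum by a different route from the paper.

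The paper never treats the diagonals directly. It takes the total of all six squared lengths,
\[
|AB|^2+|BC|^2+|CD|^2+|DA|^2+|AC|^2+|BD|^2=16-4|f|^2,
\]
from Theorem~\ref{thm:sumsq}; that theorem combines the Apollonius identity with the symmetric parametrization, where $z_1+\cdots+z_4=a_2+a_3=2f$. It then subtracts the side sum $8$ from Corollary~\ref{cor:sumsqside}, which comes from the right triangles at the diagonal intersection $f$ together with Lemma~\ref{lem:3}. Finally it substitutes into Theorem~\ref{thm:ellorhyp4}, exactly as you do.

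You instead use the classical perpendicular-chord identity $|AC|^2+|BD|^2=8-4|OP|^2$. It needs only that the diagonal lines are perpendicular and meet at $P=f$. This is shorter and purely Euclidean, and it bypasses Theorem~\ref{thm:sumsq} and Lemma~\ref{lem:3}. As you observe, it also handles the crossed (hyperbolic) configuration, with $P$ outside $\mathbb T$, at no extra cost. The paper's route shows the parametrization at work and yields the invariant side sum $8$ along the way, which it reuses in the next corollaries (for example, the midpoint distance via Euler's quadrilateral theorem).

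Both proofs depend equally on the identification $P=a_3=f$ coming from \eqref{eq:a3}, so this is not a gap relative to the paper. Your proposed fallback through Corollary~\ref{cor:anticen} does not remove that dependence, however. That corollary gives the anticenter as $(a_2+a_3)/2$, and equating this with $f$ again requires $a_3=a_2$.
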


\begin{proof}
Applying Theorem \ref{thm:sumsq} with $n=4$, we obtain
\[
|AB|^2+|BC|^2+|CD|^2+|DA|^2+|AC|^2+|BD|^2=16-4|f|^2.
\]
Using \eqref{eq:sqsumside}, it follows that
\[
|AC|^2+|BD|^2=8-4|f|^2=16a^2.
\]
where the last equality follows from Theorem \ref{thm:ellorhyp4}.
\end{proof}

\begin{corollary}
Let $Q$ be the cyclic quadrilateral as in Theorem \ref{thm:orthodiag}. Then the distance between the midpoints of the diagonals of $Q$ equals the distance between the foci of the conic.
\end{corollary}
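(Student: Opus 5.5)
We place the circumcircle at $\mathbb T$ and the focus at the circumcenter, $a_1=0$, so that $f=0$. The other focus is then $a_2$.

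We use the facts already established for this configuration. By Corollary \ref{cor:anticen}, the anticenter satisfies $z_T=\tfrac12(z_1+z_2+z_3+z_4)=a_2$, since $a_3=a_2$ when $a_1=0$ by \eqref{eq:a3}. By Theorem \ref{thm:orthodiag}, $Q$ is orthodiagonal. The preceding theorem shows that the diagonals meet at $a_3=a_2$.

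Write $M_1=\tfrac12(z_1+z_3)$ and $M_2=\tfrac12(z_2+z_4)$ for the midpoints of the diagonals. Then
\[
M_1+M_2=\tfrac12(z_1+z_2+z_3+z_4)=a_2 .
\]
Since the circumcenter is $0$, each midpoint $M_i$ is the foot of the perpendicular from $0$ to the corresponding diagonal. Both diagonals pass through their intersection point $P=a_2$ and are perpendicular to each other. So the quadrilateral with vertices $0,M_1,P,M_2$ has right angles at $M_1$, $M_2$ and $P$, and is therefore a rectangle. In particular
\[
|M_1-M_2| = |0-P| = |a_2| ,
\]
the diagonals of a rectangle being equal. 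As a consistency check, $M_1+M_2=a_2=0+P$ is exactly the parallelogram relation for this rectangle.

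Finally, $|a_2|=|a_2-a_1|$ is the distance between the foci, so the distance between the midpoints of the diagonals equals the focal distance, as claimed.

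The main obstacle is justifying that the diagonals intersect exactly at $a_2$. This needs the extension of \eqref{eq:a3}, namely that $a_3$ is the intersection of the diagonals, to all focal configurations, which the paper asserts and we take as given. The degenerate case $a_2=0$ is excluded because the foci are distinct.

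If we prefer to avoid the rectangle argument, there is an algebraic alternative. Orthodiagonality gives $z_1z_3=-z_2z_4$, and then
\[
|M_1-M_2|^2=\tfrac14|z_1+z_3-z_2-z_4|^2 = |M_1+M_2|^2 ,
\]
because
\[
\operatorname{Re}\bigl((z_1+z_3)\overline{(z_2+z_4)}\bigr)=0 .
\]
This vanishing follows from orthodiagonality: write $z_3=-z_2z_4\overline{z_1}$ and expand.
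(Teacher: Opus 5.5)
Your proof is correct, but it takes a different route from the paper's. The paper never looks at the midpoints directly. It applies Euler's quadrilateral theorem,
\[
|AB|^2+|BC|^2+|CD|^2+|DA|^2=|AC|^2+|BD|^2+4|LM|^2,
\]
and substitutes the two invariants it has just derived. These are $8$ for the sides (Corollary~\ref{cor:sumsqside}) and $8-4|a_2|^2$ for the diagonals (Corollary~\ref{cor:sumsqdiag}, i.e.\ Theorem~\ref{thm:sumsq} with $z_1+\cdots+z_4=2a_2$), which forces $|LM|=|a_2|$.

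You instead prove a general fact directly: for a cyclic orthodiagonal quadrilateral with circumcenter $0$, the midpoints satisfy $\operatorname{Re}(M_1\overline{M_2})=0$. Hence $|M_1-M_2|=|M_1+M_2|$, which is the distance from the circumcenter to the anticenter. Only then do you bring in the Poncelet input, via Corollary~\ref{cor:anticen}. Your argument is shorter and isolates exactly what is needed, namely orthodiagonality and $z_1+\cdots+z_4=2a_2$. The paper's version obtains the result as a by-product of its sum-of-squares invariants, which fits the section's theme but stacks three earlier results.

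Two small remarks. First, the \emph{main obstacle} you flag is not really needed. The parallelogram relation you record as a consistency check already gives $P=M_1+M_2=a_2$, and your algebraic variant never uses $P$ at all. In your argument, the fact that the diagonals meet at $a_3$ enters only through Theorem~\ref{thm:orthodiag}, whose proof in the paper relies on it. Second, the algebraic variant also covers the degenerate positions where a diagonal passes through the circumcenter. There the rectangle $0M_1PM_2$ collapses to a segment, which the rectangle argument handles only implicitly.
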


\begin{proof}
Let $L$ and $M$ denote the midpoints of the diagonals $AC$ and $BD$ of $ABCD$. By Euler's theorem for quadrilaterals,  
\[
|AB|^2+|BC|^2+|CD|^2+|DA|^2 = |AC|^2 + |BD|^2 + 4|LM|^2.
\]  
Using Corollaries \ref{cor:sumsqside} and \ref{cor:sumsqdiag}, we deduce  
\[
|LM| = |a_2|,
\]  
so that $|LM|=|a_2-a_1|$. Hence, the distance between the midpoints of the diagonals is invariant over $\mathcal{P}$ and equals the distance between the foci of $\mathcal{D}$.
\end{proof}

\begin{corollary}\label{cor:21}
Let $Q$ be the cyclic quadrilateral as in Theorem \ref{thm:orthodiag}. Then the sum of the squares of the distances from the circumcenter to any pair of opposite sides of $Q$ is equal to the square of its circumradius.
\end{corollary}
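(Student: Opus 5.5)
The plan is to exploit the orthodiagonality given by Theorem~\ref{thm:orthodiag}, together with the classical fact that in a cyclic orthodiagonal quadrilateral the distance from the circumcenter to a side equals half the length of the opposite side (Brahmagupta's theorem). Normalize the circumcircle to $\mathbb T$, so the circumradius is $1$ and the circumcenter is the focus $f=0$. Label the vertices $A,B,C,D$ in order. We need to show
\[
\operatorname{dist}(O,AB)^2+\operatorname{dist}(O,CD)^2=1,
\]
and similarly for the pair $BC,DA$.

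The first step is to compute the distance to each chord directly. The distance from $O$ to the chord $AB$ is $\sqrt{1-|AB|^2/4}$, since the foot of the perpendicular is the midpoint of the chord. Hence
\[
\operatorname{dist}(O,AB)^2+\operatorname{dist}(O,CD)^2=2-\tfrac14\left(|AB|^2+|CD|^2\right).
\]

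The second step uses Corollary~\ref{cor:sumsqside}, which gives $|AB|^2+|CD|^2=4$. The sum therefore equals $2-1=1$, which is the square of the circumradius. The same computation with $|BC|^2+|DA|^2=4$ handles the other pair of opposite sides.

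For a general circumradius $R$, the result follows by scaling. Alternatively, one can rerun the argument with $R^2-|AB|^2/4$, using $|AB|^2+|CD|^2=4R^2$. This identity holds for any cyclic orthodiagonal quadrilateral: writing $E$ for the intersection of the diagonals, the right triangles at $E$ give $|AB|^2+|CD|^2=|EA|^2+|EB|^2+|EC|^2+|ED|^2$. By the intersecting-chords/diameter argument this equals $4R^2$.

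The only real subtlety is confirming that Corollary~\ref{cor:sumsqside} applies in the normalized setting. Its proof there goes through the right angles at the focus. I would instead justify it through orthodiagonality plus the classical identity above, so that the argument does not depend on where the diagonals meet. Everything else is routine.
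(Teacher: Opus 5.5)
Your proof is correct. Like the paper's, it reduces everything to $|AB|^2+|CD|^2=4R^2$ (Corollary~\ref{cor:sumsqside}), but it obtains the distances differently.

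\begin{itemize}
\item The paper invokes Brahmagupta's theorem, $\operatorname{dist}(O,AB)=\tfrac12|CD|$, so the sum is $\tfrac14(|AB|^2+|CD|^2)$.
\item You use the half-chord formula $\operatorname{dist}(O,AB)^2=R^2-\tfrac14|AB|^2$, which holds for every chord, so the sum is $2R^2-\tfrac14(|AB|^2+|CD|^2)$.
\end{itemize}

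The two routes are equivalent: $\operatorname{dist}(O,AB)=\tfrac12|CD|$ follows at once from the half-chord formula together with that identity. Your route therefore simply dispenses with Brahmagupta, which your first sentence announces but your argument never uses. It also makes clear that orthodiagonality enters only through the one identity. At this step the paper cites Corollary~\ref{cor:sumsqdiag}, but the fact actually needed is Corollary~\ref{cor:sumsqside}, the one you cite.

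Your independent proof of $|AB|^2+|CD|^2=4R^2$ is also more self-contained than the paper's proof of Corollary~\ref{cor:sumsqside}. That proof needs the diagonals to meet at the second focus and uses Lemma~\ref{lem:3}; yours uses orthodiagonality alone.

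Your ``intersecting-chords/diameter'' step is terse. To make it airtight in all configurations, place $E$ at the origin with the two diagonal lines as coordinate axes and the circle centred at $(u,v)$. Vieta's formulas give
\[
x_A^2+x_C^2=2R^2+2u^2-2v^2,\qquad y_B^2+y_D^2=2R^2+2v^2-2u^2,
\]
hence $EA^2+EB^2+EC^2+ED^2=4R^2$ whether $E$ lies inside or outside the circle. The outside case does occur here, for example in the hyperbolic panel of Figure~\ref{fig:orthodiag}.
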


\begin{proof}
Let $Q=ABCD$ and without loss of generality, let the circumcenter of $Q$ be $F_+$. By Theorem \ref{thm:orthodiag}, $Q$ is orthodiagonal.  

Consider a pair of opposite sides, say $AB$ and $CD$. By Brahmagupta's theorem for orthodiagonal quadrilaterals,  
\[
\operatorname{dist}(F_+,AB)=\frac{1}{2}|CD|, \qquad 
\operatorname{dist}(F_+,CD)=\frac{1}{2}|AB|.
\]  
Using Corollary \ref{cor:sumsqdiag}, the sum of the squares of these distances satisfies  
\[
\operatorname{dist}(F_+,AB)^2 + \operatorname{dist}(F_+,CD)^2 = \frac{1}{4}(|AB|^2 + |CD|^2) = 1,
\]  
showing the claimed invariance over $\mathcal{P}$.
\end{proof}

\begin{corollary}
Let $ABCD$ be the cyclic quadrilateral as in Theorem \ref{thm:orthodiag}. For any pair of adjacent interior angles $\phi,\varphi$ of $ABCD$, one has
\begin{equation}\label{eq:2.16}
\sin^2 \phi + \sin^2 \varphi = (2a)^2,
\end{equation}
where $a$ is the length of the semi-major axis of $\mathcal{D}$.
\end{corollary}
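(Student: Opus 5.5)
The plan is to convert the angle statement into a statement about the diagonals by the extended law of sines, and then to invoke Corollary~\ref{cor:sumsqdiag}. Write $Q=ABCD$ with vertices in cyclic order on the circumcircle $\mathbb T$, so the circumradius is $R=1$. Any pair of adjacent interior angles can be relabelled as the angles at $A$ and $B$. Since $Q$ is cyclic, opposite angles are supplementary, so $\sin^2$ of an angle equals $\sin^2$ of the opposite angle. Hence every pair of adjacent angles yields the same value of $\sin^2\phi+\sin^2\varphi$, and it suffices to treat $\phi=\angle DAB$ and $\varphi=\angle ABC$.

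The key step is to apply the extended law of sines in the triangles $ABD$ and $ABC$, both of which are inscribed in $\mathbb T$. In $\triangle ABD$ the angle at $A$ is $\phi$ and the opposite side is the diagonal $BD$, so $|BD|=2R\sin\phi=2\sin\phi$. Likewise, in $\triangle ABC$ the angle at $B$ is $\varphi$ and the opposite side is $AC$, so $|AC|=2\sin\varphi$. Therefore
\[
\sin^2\phi+\sin^2\varphi=\frac{|AC|^2+|BD|^2}{4}.
\]

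It then remains to insert the value of $|AC|^2+|BD|^2$ from Corollary~\ref{cor:sumsqdiag}, which applies because $Q$ is the quadrilateral of Theorem~\ref{thm:orthodiag}. That corollary gives $|AC|^2+|BD|^2=16a^2$, where $a$ is the semi-major axis of $\mathcal D$. Substituting this above yields $\sin^2\phi+\sin^2\varphi=4a^2=(2a)^2$, which is \eqref{eq:2.16}.

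The only delicate point is keeping the normalisations consistent. We need $R=1$ in the law of sines, as in the standing convention that the circumcircle is $\mathbb T$. We also need the diagonal identity of Corollary~\ref{cor:sumsqdiag} in the form $16a^2$, which rests on the major-axis formula of Theorem~\ref{thm:ellorhyp4} with one focus at the origin. If that normalisation caused trouble, I would recompute $|AC|^2+|BD|^2=8-4|f|^2$ directly from Theorem~\ref{thm:sumsq} and \eqref{eq:sqsumside}. I would then express $8-4|f|^2$ in terms of $a$ using the formula of Theorem~\ref{thm:ellorhyp4} with $a_1=0$ and $a_2=f$.
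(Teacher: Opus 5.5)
Your argument is the paper's own: the extended law of sines in $\triangle ABD$ and $\triangle ABC$, followed by Corollary \ref{cor:sumsqdiag}. However, the normalisation in your first paragraph breaks it in the hyperbolic case of Theorem \ref{thm:orthodiag}.

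Put the circumcenter at the focus $0$ and let $f$ be the other focus. Theorem \ref{thm:ellorhyp4} then gives $4a^2=2-|f|^2$, and $\mathcal D$ is a hyperbola exactly when $1<|f|<\sqrt2$. In that case the diagonals of $Q$ meet at $f$, outside $\mathbb T$. Here "diagonals" means those of $Q$ taken in the order in which its sides touch $\mathcal D$, which is the ordering that Theorem \ref{thm:orthodiag} and Corollary \ref{cor:sumsqdiag} refer to. Since these diagonals do not cross, $Q$ is a self-intersecting quadrilateral, as in the hyperbola panel of Figure \ref{fig:orthodiag}.

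Relabelling the vertices in circular order swaps the diagonals of $Q$ with one pair of opposite sides of $Q$. For those segments, perpendicularity at $f$ gives $|AC|^2+|BD|^2=\sum_j|z_j-f|^2=4$, which is the computation behind Corollary \ref{cor:sumsqside}. So after your relabelling you would get $\sin^2\phi+\sin^2\varphi=1$, not $2-|f|^2$. For instance, with $f=1+\tfrac12 i$ as in the figure, $4a^2=\tfrac34$, while the angles of the convex quadrilateral on the same four points satisfy $\sin^2\phi+\sin^2\varphi=1$. The substitution of Corollary \ref{cor:sumsqdiag} is therefore not justified once you reorder. Your reduction "opposite angles are supplementary" also fails for the crossed quadrilateral, where opposite angles are equal. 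Their sines still agree, so that error is harmless.

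The repair is simple: keep $A,B,C,D$ in the polygon order of $Q$. The extended law of sines in the triangles $ABD$ and $ABC$, both inscribed in $\mathbb T$, needs no convexity, so $|BD|=2\sin\angle DAB$ and $|AC|=2\sin\angle ABC$ still hold. At each vertex, the angle between the two sides of $Q$ subtends the diagonal not through that vertex. Hence any two adjacent angles account for both $AC$ and $BD$, and the supplementary-angle step can be dropped entirely. With this ordering your proof coincides with the paper's, and $(|AC|^2+|BD|^2)/4=4a^2$ follows in both the elliptic and hyperbolic cases.
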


\begin{proof}
By the law of sines for $\triangle ABC$ and $\triangle ABD$,  
\[
\frac{|BD|}{\sin A} = \frac{|AC|}{\sin B} = 2,
\]  
where $ABCD$ has unit circumradius. 

Using Corollary \ref{cor:sumsqdiag}, the sum of squares of sines of two adjacent angles satisfies  
\begin{equation}\label{eq:sinasinb}
    \sin^2 A + \sin^2 B = 4a^2
\end{equation}  
where $a$ is the length of the semi-major axis of $\mathcal{D}$. 

This completes the proof.
\end{proof}


\noindent \textbf{Disclosure Statement}. The author reports there is no conflict of interest.
\vspace{0.3cm}

\noindent
\textbf{Funding Declaration.} The author received no financial support for the research, authorship, and/or publication of this article.

\vspace{0.3cm}
\noindent
\textbf{Data Availability Statement.} Data sharing is not applicable to this article as no datasets were generated or analyzed during the current study.

\vspace{0.3cm}
\noindent
\thanks{\textbf{Acknowledgments}. The author is grateful to his wife, Saba Fatema, for her insightful discussions and suggestions.

\bibliographystyle{amsplain} 
\bibliography{references}

@article{Cayley1853a,
    author = {Cayley, A.},
    title = {{XII}. {N}ote on the {P}orism of the in-and-circumscribed {P}olygon},
    journal = {Philosophical Magazine},
    volume  = {6},
    number  = {37},
  year    = {1853},
  pages   = {99--102}
}

@article{Cayley1853b,
    author = {Cayley, A.},
    title = {{LVIII}. {C}orrection of two {T}heorems relating to the {P}orism of the in-and-circumscribed {P}olygon},
    journal = {Philosophical Magazine},
    volume  = {6},
    number  = {6},
  year    = {1853},
  pages   = {376--377}
}

@book{AltshillerCourt1952,
  author    = {Altshiller-Court, N.},
  title     = {College {G}eometry: {A}n {I}ntroduction to the {M}odern {G}eometry of the {T}riangle and the {C}ircle},
  publisher = {Dover Publications},
year = {2007},
note = {Dover reprint of the 1952 edition}
}

@article{Marden1945,
  author  = {Marden, M.},
  title   = {A {N}ote on the {Z}eros of the {S}ections of a {P}artial {F}raction},
  journal = {Bulletin of the American Mathematical Society},
  volume  = {51},
  number  = {12},
  year    = {1945},
  pages   = {935--940},
  note     = {\text{doi}:\href{http://dx.doi.org/10.1090/S0002-9904-1945-08470-5}{10.1090/S0002-9904-1945-08470-5}}
}

@article{Helmanetal.2022,
  author  = {Helman, M. and Laurain, D. and Garcia, R. and Reznik, D.},
  title   = {Poncelet {T}riangles: {A} {T}heory for {L}ocus {E}llipticity},
  journal = {Beitr Algebra Geom},
  year    = {2022},
  volume  = {63},
  number  = {3},
  pages   = {445--457},
  note    = {\text{doi}:\href{http://dx.doi.org/10.1007/s13366-021-00620-0}{10.1007/s13366-021-00620-0}}
}

@article{Helmanetal.2023,
  author  = {Helman, M. and Laurain, D. and Garcia, R. and Reznik, D.},
  title   = {Invariant {C}enter {P}ower and {E}lliptic {L}oci of {P}oncelet {T}riangles},
  journal = {J. Dyn. Control Syst.},
  year    = {2023},
  volume  = {29},
  number  = {1},
  pages   = {157--184},
  note    = {\text{doi}:\href{http://dx.doi.org/10.1007/s10883-021-09580-z}{10.1007/s10883-021-09580-z}}
}

@article{daepp2002ellipses,
  title     = {Ellipses and {F}inite {B}laschke {P}roducts},
  author    = {Daepp, U. and Gorkin, P. and Mortini, R.},
  journal   = {The American Mathematical Monthly},
  volume    = {109},
  number    = {9},
  pages     = {785--795},
  year      = {2002},
  publisher = {Taylor \& Francis},
  note    = {\text{doi}:\href{http://dx.doi.org/10.1080/00029890.2002.11919914}{10.1080/00029890.2002.11919914}}
}

@book{DaeppGorkinshaffervoss2018,
  author    = {U. Daepp and P. Gorkin and A. Shaffer and K. Voss},
  title     = {Finding {E}llipses: What {B}laschke {P}roducts, {P}oncelet’s {T}heorem, and the {N}umerical {R}ange {K}now about {E}ach {O}ther},
  publisher = {American Mathematical Society},
  year      = {2018},
  series    = {Mathematical Association of America Textbooks},
  address   = {Providence, RI},
  isbn      = {978-1-4704-4178-1}
}

@article{Garciaetal.2026,
  author  = {Garcia, R. and Helman, M. and Reznik, D.},
  title   = {Graceful {L}oci of {P}oncelet {T}riangles about the {I}ncircle and their {D}egeneracies},
  journal = {Beitr Algebra Geom},
  year    = {2026},
  volume  = {},
  number  = {},
  pages   = {},
  note    = {\text{doi}:\href{https://doi.org/10.1007/s13366-026-00825-1}{10.1007/s13366-026-00825-1}}
}

@article{Dragovic-Radnovic2024,
  author  = {Dragovi\'{c}, V. and Radnovi\'{c}, M.},
  title   = {Isoperiodic {F}amilies of {P}oncelet {P}olygons {I}nscribed in a {C}ircle and {C}ircumscribed about {C}onics from a {C}onfocal {P}encil},
  journal = {Geom Dedicata},
  year    = {2024},
  volume  = {218},
  pages  = {81:1--23},
note =         {\text{doi}:\href{http://dx.doi.org/10.1007/s10711-024-00929-9}{10.1007/s10711-024-00929-9}}
}

@article{Dragovic-Murad2026,
  author  = {Dragovi\'{c}, V. and Murad, M. H.},
  title   = {Generalized {C}happle--{E}uler {R}elation},
  year = {2026},
note =         {to appear in European Journal of Mathematics \text{arXiv}:\href{https://arxiv.org/abs/2603.00001v3}{2603.00001v3}}
}

@article{Murad2026b,
  author  = {M. H. Murad},
  title   = {A {V}ector {G}eneralization of {E}uler's {Q}uadrilateral {T}heorem},
 note =  {arXiv:\href{https://arxiv.org/abs/2603.15657}{2603.15657}}
}

@article{Fujimura2013,
  author    = {M. Fujimura},
  title     = {Inscribed {E}llipses and {B}laschke {P}roducts},
  journal   = {Computational Methods and Function Theory},
  volume    = {13},
  number    = {4},
  pages     = {557--573},
  year      = {2013},
  note       = {
  {doi:\href{https://doi.org/10.1007/s40315-013-0037-8}{10.1007/s40315-013-0037-8}}}
}

@book{Hahn2019,
  author    = {Hahn, L.-S.},
  title     = {Complex {N}umbers and {G}eometry},
  series    = {AMS/MAA Textbooks},
  volume    = {52},
  publisher = {American Mathematical Society},
  address   = {\,\,Providence, RI},
  year      = {2019}
}

@article{Celiketal2026,
  author  = {Çelik, M. and Duguin, M. and Guo, J. and Luo, D. and Spinelli, K. and
             Zeytuncu, Y. and Zhu, Z.},
  title   = {Exploring a {G}eometric {C}onjecture, {S}ome {P}roperties of {B}laschke {P}roducts,
             and the {G}eometry of {C}urves Formed by Them},
  journal = {Computational Methods and Function Theory},
  volume   = {26},
  number   = {1},
  pages    = {183--198},
  year     = {2026},
  note    = {\text{doi}:\href{ https://doi.org/10.1007/s40315-025-00579-2}{10.1007/s40315-025-00579-2}},
  doi      = {10.1007/s40315-025-00579-2},
  publisher = {Springer}
}
\end{document}